\newcommand{\foot}[1]{\footnote{\linespread{1}\selectfont \,#1}}
\newcommand{\Omeag}{\Omega}
\newcommand{\gmid}{\Gamma\mid}
\newcommand{\pow}[1]{\mathcal{P}(#1)}
\newcommand{\Hom}[3]{\mathrm{Hom}_{#1}(#2,#3)}
\newcommand{\cat}[1]{\textbf{#1}}
\newcommand{\prs}[1]{\cat{Sets}^{\cat{#1}^{op}}}
\newcommand{\srp}[1]{\cat{Sets}^{\cat{#1}}}
\newcommand{\srpp}[1]{\cat{Sets}^{|\cat{#1}|}}
\newcommand{\tps}[1]{\mathcal{#1}}
\newcommand{\sub}[2]{\mathrm{Sub}_{#1}(#2)}
\newcommand{\subt}[2]{\mathrm{Sub}_{\tps{#1}}(#2)}
\newcommand{\inv}[1]{#1^{-1}}
\newcommand{\sem}[1]{\llbracket #1 \rrbracket} 
\newcommand{\y}{\textbf{y}}
\newcommand{\mtext}[1]{\ \text{#1}\ }
\newcommand{\srow}[2]{\sem{{#1}_1} #2 \dots #2 \sem{{#1}_n}}
\newcommand{\typrow}[3]{{#1}_1 : {#2}_1 #3 \dots #3 {#1}_n : {#2}_n}
\newcommand{\domp}[1]{\text{dom}^\prime}
\newcommand{\codp}[1]{\text{cod}'}
\newcommand{\compe}[1]{\text{comp}}
\newcommand{\compp}[1]{\text{comp}'}
\newcommand{\snd}[1]{\text{snd}}
\newcommand{\fst}[1]{\text{fst}}
\newcommand{\sndp}[1]{\text{snd}'}
\newcommand{\fstp}[1]{\text{fst}'}
\newcommand{\ops}[1]{\mathcal{O}(#1)}
\newcommand{\tr}{\textsf{t}}
\newcommand{\mathiff}{\ \text{ iff }\ }
\newcommand{\wrt}{w.r.t.\ } 
\newcommand{\eg}{e.g.\ }
\newcommand{\ie}{i.e.\ }
\newcommand{\secref}[1]{section \ref{#1}}
\newcommand{\lemref}[1]{lemma \ref{#1}}
\newcommand{\spf}[5]{#1 \ar@<1.5ex>[rr]^-{#3} \ar@<-1.5ex>[rr]_-{#5} && #2 \ar[ll]|-{#4}}
\newtheorem{thm}{Theorem}[section]
\newtheorem{lem}[thm]{Lemma}
\newtheorem{cor}[thm]{Corollary}
\newtheorem{prop}[thm]{Proposition}
\newtheorem{mydef}[thm]{Definition}
\newtheorem{fact}[thm]{Fact}
\theoremstyle{remark}
\newtheorem{rem}[thm]{Remark}
\newtheorem{exam}{Example}[section]
\newtheorem{exams}{Examples}[section]
\newcommand{\app}[2]{\textsf{app}(#1,#2)}
\newcommand{\soc}[1]{\Omega_{\tps{#1}}}
\newcommand{\gsoc}[2]{{#1}_\ast\soc{#2}}
\newcommand{\sh}[2]{\text{Sh}_{#1}(#2)}
\newcommand{\bigmeet}{\forall}
\begin{document}

%
%
%

\title{Topos Semantics for Higher-Order Modal Logic}
\author{Steve Awodey\thanks{
Department of Philosophy, Carnegie Mellon University}
 \and
 Kohei Kishida\thanks{
Department of Computer Science, University of Oxford}
 \and
 Hans-Christoph Kotzsch\thanks{
Munich Center for Mathematical Philosophy, LMU Munich}
}
\maketitle

\begin{quote}
\textsc{Abstract.} We define the notion of a model of higher-order modal logic in an arbitrary elementary topos $\tps{E}$.  In contrast to the well-known interpretation of (non-modal)  higher-order logic, the type of propositions is not interpreted by the subobject classifier $\soc{E}$, but rather by a suitable complete Heyting algebra $H$. The canonical map relating $H$ and $\soc{E}$ both serves to interpret equality and provides a modal operator on $H$ in the form of a comonad. Examples of such structures arise from surjective geometric morphisms $f : \tps{F} \rightarrow \tps{E}$, where $H = f_\ast\soc{F}$. The logic differs from non-modal higher-order logic in that the principles of functional and propositional extensionality are not longer valid but may be replaced by modalized versions. The usual Kripke, neighborhood, and sheaf semantics for propositional and first-order modal logic are subsumed by this notion. 
\end{quote}

\section*{Introduction}

In many conventional systems of semantics for quantified modal logic, models are built on presheaves.
Given a set $K$ of ``possible worlds", Kripke's semantics \cite{kripke63}, for instance, assigns to each world $k \in K$ a domain of quantification $P(k)$ --- regarded as the set of possible individuals that ``exist'' in $k$ --- and then $\exists x \, \varphi$ is true at $k$ iff some $a \in P(k)$ satisfies $\varphi$ at $k$.
David Lewis's counterpart theory \cite{lewis68} does the same (though it further assumes that $P(k)$ and $P(l)$ are disjoint for $k \neq l \in K$).
Such an assignment $P$ of domains to worlds is a presheaf $P : K \to \cat{Sets}$ over the set of worlds, thus an object of the topos $\cat{Sets}^K$.
(Due to the disjointness assumption one may take counterpart theory as using objects of the slice category $\cat{Sets}/K$, which however is categorically equivalent to $\cat{Sets}^K$.)
Kripke-sheaf semantics for quantified modal logic \cite{goldblatt79, ghilardi89, ghilardimeloni88, shehtmanskortsov91} is another example of this sort.
Indeed, both counterpart theory and Kripke-sheaf semantics interpret unary formulas by subsets of the ``total set of elements" $\sum_{k \in K} P(k)$, and, more generally, $n$-ary formulas by relations of type:
\[
\pow{\sum_{k \in K} P(k)^n} \cong \sub{\cat{Sets}^K}{P^n}.
\]
In fact, counterpart theory and Kripke-sheaf semantics interpret the non-modal part of the logic in the same way. (Kripke's semantics differs somewhat in interpreting $n$-ary formulas instead as subsets of $K \times (\bigcup_{k \in K} P(k))^n$.)

Among these presheaf-based semantics, the principal difference consists in how to interpret the modal operator $\Box$.
Let \cat{K} be a set of worlds $K =|\cat{K}|$ equipped with a relation $k\leq j$ of ``accessibility''.
Kripke declares that an individual $a \in \bigcup_{k \in \cat{K}} P(k)$ satisfies a property $\Box \varphi$ at world $k \in \cat{K}$ iff $a$ satisfies $\varphi$ in all $j\geq k$.
Lewis instead introduces a ``counterpart'' relation among individuals, and deems that $a \in P(k)$ satisfies $\Box \varphi$ iff all counterparts of $a$ satisfy $\varphi$.
We may take Kripke-sheaf semantics as giving a special case of Lewis's interpretation:
Assuming \cat{K} to be a preorder, the semantics takes a presheaf $P : \cat{K} \to \cat{Sets}$ on $\cat{K}^{op}$, and not just on the underlying set $|\cat{K}|$, so that a model comes with comparison maps $\alpha_{kj} : P(k) \rightarrow P(j)$ whenever $k \leq j$ in \cat{K}.
Then $\alpha_{kj} : P(k) \rightarrow P(j)$ gives a counterpart relation:
$\alpha_{kj}(a)$ is the counterpart in the world $j$ of the individual $a \in P(k)$, so that $a$ satisfies $\Box \varphi$ iff $\alpha_{kj}(a)$ satisfies $\varphi$ for all $j \geq k$. (Notable differences between Kripke-sheaf semantics and Lewis's are the following: In the former, $\cat{K}$ can be any preorder, whereas Lewis only considers the universal relation on $\cat{K}$.
Also, since $P$ is a presheaf, the former assumes that $a \in P(k)$ has one and only one counterpart in every $j\geq k$.)

In terms of interior operators, 
this gives an interpretation of $\Box$ on the poset $\sub{\cat{Sets}^{|\cat{K}|}}{uP}$ of sub-presheaves of $uP$ where  $u : \cat{Sets}^{\cat{K}} \to \cat{Sets}^{|\cat{K}|}$ is the evident forgetful functor.  Note that such sub-presheaves are just subsets of $\sum_{k \in K} P(k)$.  Explicitly, given the presheaf $P$ on $\cat{K}$ and any sub\emph{set} $\varphi\subseteq uP$ of elements of $\sum_{k \in K} P(k)$, then $\Box\varphi\subseteq \varphi \subseteq uP$ is the largest sub\emph{presheaf} contained in $\varphi$.

Observe that $u$ is the inverse image part of a fundamental example of a geometric morphism between toposes, namely, $u = i^*$ for the (surjective) geometric morphism
\[
i^*\dashv i_* : \srpp{K} \longrightarrow \srp{K}
\]
induced by the ``inclusion'' $i : |\cat{K}| \hookrightarrow \cat{K}$ of the underlying set $|\cat{K}|$ into \cat{K}.  In particular, $u$ is restriction along $i$.
This observation leads to a generalization of these various presheaf models to a general topos-theoretic semantics for first-order modal logic \cite{awodeykishida08, ghilardi07, makkaireyes95,reyeszolfaghari91}, which gives a model based on any surjective geometric morphism $f : \tps{F} \rightarrow \tps{E}$.
Indeed, for each $A$ in $\tps{E}$, the inverse image part $f^\ast : \tps{E} \rightarrow \tps{F}$ restricts to subobjects to give an injective complete distributive lattice homomorphism 
$\Delta_A : \sub{\tps{E}}{A}\to \sub{\tps{F}}{f^\ast A}$, which always has a right adjoint $\Gamma_A$. Composing these yields an endofunctor $\Delta_A \Gamma_A$ on the Heyting algebra $\sub{\tps{F}}{f^\ast A}$:
%
\begin{gather}
\label{eq:restrition.of.geometric.morphism}
\xymatrix@C+0.5pc{
\sub{\tps{F}}{f^\ast A}
\ar@{}[r]|-{\top}
\ar@<1ex>[r]^-{\Gamma_A}
\save !R(-.8) \ar@(ul,dl)_{\Delta_A \Gamma_A} \restore
&
\sub{\tps{E}}{A}
\ar@<1ex>[l]^-{\Delta_A}
}
\end{gather}

In the special case considered above,  the interior operation on the ``big algebra" $$\pow{\sum_{k \in \cat{K}} P(k)} \cong \sub{\cat{Sets}^{|\cat{K}|}}{u P}$$ determines the ``small algebra" $\sub{\cat{Sets}^{\cat{K}}}{P}$ as the Heyting algebra of upsets in $\sum_{k \in \cat{K}} P(k)$ (an upset $S \subseteq \sum_{k \in \cat{K}} P(k)$ is a subset that is closed under the counterpart relation:  $a \in S$ for $a \in P(k)$ implies $\alpha_{kj}(a) \in S$ for all $j \geq k$.) Moreover, 
$\Gamma_P$ is the operation giving ``the largest upset contained in \ldots'', and $\Delta_P$ is the inclusion of upsets into the powerset. In this case, the logic is ``classical", since the powerset is a Boolean algebra. 

In the general case, the operator $\Box$ is of course interpreted by $\Delta_A \Gamma_A$, which always satisfies the axioms for an S4 modality, since $\Delta_A \Gamma_A$ is a left exact comonad.  The specialist will note that both $\Delta_A$ and  $\Gamma_A$ are natural in $A$, in a suitable sense, so that this interpretation will satisfy the  Beck-Chevalley condition required for it to behave well with respect to substitution, interpreted as pullback (see \cite{awodeykishida08}).

This, then, is how topos-theoretic semantics generalizes Kripke-style and related semantics for quantified modal logic (cf.~\cite{awodeykishida08}).  Now let us further observe that, since $\tps{F}$ is a topos, it in fact has enough structure to also interpret \emph{higher-order} logic, and so a geometric morphism $f : \tps{F} \to \tps{E}$ will interpret \emph{higher-order modal logic}.  This is the logic that the current paper investigates.  The first step of our approach is to observe that, because higher-order logic includes a type of ``propositions", interpreted by a subobject classifier $\Omega$, the natural operations on the various subobject lattices in \eqref{eq:restrition.of.geometric.morphism} can be internalized as operations on $\Omega$.  Moreover, the relevant part of the geometric morphism $f : \tps{F} \rightarrow \tps{E}$, giving rise to the modal operator, can also be internalized, so that one really just needs the topos $\tps{E}$ and a certain algebraic structure on its subobject classifier $\Omega_\tps{E}$.  
That structure replaces the geometric morphism $f$ by the induced operations on the internal algebras $f_*\Omega_\tps{F}$ and $\Omega_\tps{E}$ inside the topos $\tps{E}$.  More generally, the idea is to describe a notion of an ``algebraic" model inside a topos $\tps{E}$, using the fact that S4 modalities always occur as adjoint pairs between suitable algebras.

In a bit more detail, the higher-order logical language will be interpreted \wrt a complete Heyting algebra $H$ in $\tps{E}$, extending ideas from traditional algebraic semantics for intuitionistic logic \cite{rasiowasikorski, scott08}.
The modal operator on $H$ arises from an (internal) adjunction 
%
\begin{gather}
\xymatrix@C+0.5pc{
H
\ar@{}[r]|-{\top}
\ar@<1ex>[r]^-{\tau}
\save !R(-.2) \ar@(ul,dl)_{i\tau} \restore
&
\Omega_\tps{E}
\ar@<1ex>[l]^-{i}
}
\end{gather}
where $i$ is a monic frame map and $\tau$ classifies the top element of $H$. (This, of course, is just the unique map of \emph{locales} from $H$ to the terminal locale.)
Externally, for each $A\in \tps{E}$, we then have a natural adjunction between Heyting algebras,
\begin{gather}
\label{diag:internalmodality}
\xymatrix@C+0.5pc{
\Hom{\tps{E}}{A}{H}
\ar@{}[r]|-{\top}
\ar@<1ex>[r]^-{\tau_A}
\save !R(-.8) \ar@(ul,dl)_{i_A\tau_A} \restore
&
\sub{\tps{E}}{A}
\ar@<1ex>[l]^-{i_A}
}
\end{gather}
defined by composition as indicated in the following diagram:
\[
\xymatrix{
& H \ar@<1ex>[d]^\tau \ar@{}[d]|\dashv \\
A \ar@/^.5pc/[ur] \ar[r] &\soc{E}\ar@<1ex>[u]^i\\
}
.
\]

Comparing \eqref{diag:internalmodality}  to \eqref{eq:restrition.of.geometric.morphism}, we note  that in the case $H=f_*\Omega_\tps{F}$ for a geometric morphism $f : \tps{F}\to\tps{E}$ we have:
\[
 \Hom{\tps{E}}{A}{H} = \Hom{\tps{E}}{A}{f_*\Omega_\tps{F}} \cong \Hom{\tps{F}}{f^*A}{\Omega_\tps{F}} \cong \sub{\tps{F}}{f^*A},
 \]
as required.

In this way, the topos semantics formulated in terms of a geometric morphism $f : \tps{F} \rightarrow \tps{E}$ gives rise to an example of the required ``algebraic" structure \eqref{diag:internalmodality}, with $H = f_\ast \Omega_\tps{F}$, and the same semantics for first-order modal logic can also be defined in terms of the latter.  On the other hand, every algebraic model in $\tps{E}$ arises in this way from a geometric morphism from a suitable topos $\tps{F}$, namely the topos of internal sheaves on $H$.
Thus, as far as the interpretation of first-order logic goes, the algebraic approach is equivalent to the  geometric one (the latter restricted to \emph{localic} morphisms, which is really all that is relevant for the interpretation). The advantage of the algebraic approach for \emph{higher}-order logic will become evident in what follows.  To give just one example,  we shall see how the interpretation results in a key new (inherently topos-theoretic) treatment of equality which illuminates the relation between modality and intensionality.

The goal of this paper is both to present the new idea of algebraic topos semantics for higher-order modal logic and to revisit the accounts of first-order semantics that are scattered in the literature, putting them into perspective from the point of view of the unifying framework developed here. The question of completeness will be addressed in a separate paper \cite{awodeykotzsch}, extending the result in \cite{awodeykishida12}.

In the remainder of this paper, we first review the well-known topos semantics for (intuitionistic) higher-order logic and describe the adjunction $i \dashv \tau$ in some detail.
The second section then states the formal system of higher-order modal logic that is considered here and gives the definition of its models.
The third section discusses in detail the failure of the standard extensionality principles and the soundness of the modalized versions thereof.
We then show how the semantics based on geometric morphisms can be captured within the present, algebraic framework.
The last section states the representation theorem mentioned above.

For general background in topos theory (particularly for \secref{ReprAlgMod}) we refer the reader to \cite{johnstone02,JoyalTierney84, maclanemoerdijk92}, and for background on higher-order type theory to \cite{jacobs99, johnstone02, lambekscott88}.
We assume some basic knowledge of category-theoretical concepts, but will recall essential definitions and proofs so as to make the paper more accessible. The algebraic approach pursued here was first investigated by Hans-J\"org Winkler and the first author, and some of these results were already contained in \cite{joergsthesis}.  Finally, we have benefitted from many conversations with Dana Scott, whose ideas and perspective have played an obvious role in the development of our approach.


\section{Frame-valued logic in a topos}

Recall that a \emph{topos} $\tps{E}$ is a cartesian closed category with equalizers and a subobject classifier $\Omega_\tps{E}$. The latter is defined as an object $\Omega_\tps{E}$ together with an isomorphism 
\begin{equation}\label{DefSOC}
\sub{\tps{E}}{A} \cong \Hom{\tps{E}}{A}{\Omega_\tps{E}},
\end{equation}
natural in $A$ (\wrt  pullback on the left, and precomposition on the right). 
Equivalently, there is a distinguished monomorphism $\top : 1 \rightarrow \soc{E}$ such that for each subobject $M \rightarrowtail A$  there is a unique map $\mu : A \rightarrow \soc{E}$ for which $M$ arises as the pullback of $\top$ along $\mu$:
\[
\xymatrix{
M \ar[r] \ar@{ >->}[d] &1 \ar[d]^\top \\
A \ar[r]_\mu & \soc{E}
}
\]
This definition determines $\Omega_\tps{E}$ up to isomorphism. 
The map $\mu$ is called the \emph{classifying map} of $M$. The category \cat{Sets} is a topos with subobject classifier the  two-element set $\cat{2}$. The classifying maps are the characteristic functions of subsets of a given set $A$. We list some further examples that will play a role later on.

\begin{exam}
An important example is the subobject classifier in the topos of $I$-indexed families of set, for some fixed set $I$; equivalently the functor category $\cat{Sets}^I$. It is a functor $\Omega : I \rightarrow \cat{Sets}$ with components $\Omega(i) = \cat{2}$.

The subobject classifier in $\prs{C}$, for any small category $\cat{C}$, is described as follows. For any object $C$ in \cat{C}, $\Omega(C)$ is the set of all \emph{sieves} $\sigma$ on $C$, \ie sets of arrows $h$ with codomain $C$ such that $h \in \sigma$ implies $h \mathbin{\circ} f \in \sigma$, for all $f$ with $\text{cod}(f) = \text{dom}(h)$. For an arrow $g : D \rightarrow C$ in \cat{C}, $\Omega(g)(\sigma)$ is the restriction of $\sigma$ along $g$:
\[
\Omega(g)(\sigma) = \{f : X \rightarrow D \mid g \mathbin{\circ} f \in \sigma\},
\]
which is a sieve on $D$. The mono $\top : 1 \rightarrow \Omega$ is the natural transformation whose components pick out the maximal sieve $\top_C$ on $C$, \ie the set of all arrows with codomain $C$ (the terminal object $1$ being pointwise the singleton).
The classifying map $\chi_m$ of a subfunctor $m : E \rightarrowtail F$ has components
\[
(\chi_m)_C(a) = \{f : X \rightarrow C \mid F(f)(a) \in E(X)\}.
\]
In particular, if \cat{C} is a \emph{preorder}, then $\Omega(C)$ is the set of all downward closed subsets of $\downarrow C$. Since in this case there is at most one arrow $g : D \rightarrow C$, the function $F(g)$ may be thought of as the \emph{restriction} of the set $F(C)$ to $F(D)$ along the inequality $D \leq C$.
\hfill $\Box$
\end{exam}

\normalsize

Each $\soc{E}$ is a complete Heyting algebra, internal in $\tps{E}$. Generally, the notion of Heyting algebra makes sense in any category with finite limits. It is an object $H$ in $\tps{E}$ with maps
\[
\xymatrix{
1 \ar[r]^{\top,\bot} & H & H \times H \ar[l]_-{\land, \lor, \Rightarrow}\\
}
\]
that provide the Heyting structure on $H$. These maps are to make certain diagrams commute, corresponding to the usual equations defining a Heyting algebra. For instance, commutativity of 
\[
\xymatrix{
H \times 1 \ar[r]^{1 \times \top} \ar[dr]_{\pi_1} & H \times H \ar[d]^\land\\
& H\\ 
}
\]
corresponds to the axiom $x \land \top = x$, for any $x \in H$. The correspondence between the usual equational definition and commutative diagrams in a category $\cat{C}$ can be made precise using the internal language of $\cat{C}$ \cite{maclanemoerdijk92}.

The induced partial ordering on $H$ is constructed as the equalizer
\[
 \xymatrix{
 E \ar@{ >->}[r] & H \times H \ar@<0.75ex>[r]^-{\land} \ar@<-0.75ex>[r]_-{\pi_1} & H ,
}	
 \]
corresponding to the usual definition 
\[
x \leq y \mathiff x \land y = x.
\]
 
The description of arbitrary joins and meets additionally requires the existence of exponentials and is an internalization of how set-indexed joins and meets in set-structures can be expressed via a suitable adjunction. For any object $I$ in $\tps{E}$, there is an arrow 
\[
\Delta_I : H \longrightarrow H^I
\]
that is the result of applying the functor $H^{(-)}$ to the unique map $I \longrightarrow 1_\tps{E}$ in $\tps{E}$. 
In detail, $\Delta_I : H \rightarrow H^I$ is the exponential transpose of $\pi_1 : H \times I \rightarrow H$ across the adjunction $(-) \times I \dashv (-)^I$. 
Set-theoretically, for any $x \in H$, $\Delta_I(x)(i) = x$, for all $i \in I$. The object $H^I$ inherits a poset structure (in fact, a Heyting structure) from $H$, which set-theoretically translates into the pointwise ordering.

$I$-indexed joins $\bigvee_I$ and meets $\bigwedge_I$ are given by internal left and right adjoints to $\Delta_I$, respectively. After all, joins and meets are coproducts and products in the Heyting algebra $H$, and these can always be defined by adjoints in exactly that way, regarding $H$ as in internal category in $\tps{E}$. This is analogous to externally defining $I$-indexed products (coproducts) of families $(A_i)_{i \in I}$ of objects in a category $\cat{C}$ by right (left) adjoints to the functor 
\[
\Delta_I : \cat{C} \longrightarrow \cat{C}^I
\]

\begin{exam}\label{Ex1}
In case $\tps{E} = \cat{Sets}$, the right adjoint $\forall_I$ to $\Delta_I$ is explicitly computed as
\begin{equation}\label{Rad1}
\forall_I(f) = \bigvee \{a \in H \mid \Delta_I(a) \leq f\},
\end{equation}
following the standard description of the right adjoint to a map of complete join-semilattices, in this case $\Delta_I$. In fact, it is not hard to see that 
\[
\forall_I(f) = \bigwedge_{i \in I} f(i).
\]
The left adjoint $\exists_I \dashv \Delta_I$ is described dually.
\hfill $\Box$
\end{exam}

\begin{exam}
An important case that will be useful later is where the category in question is of the form $\prs{C}$, for a small category $\cat{C}$. Products in $\prs{C}$ are computed pointwise. In particular, a Heyting algebra $H$ in $\prs{C}$ has pointwise natural structure. That is to say, each $H(C)$, for $C$ in \cat{C}, is a Heyting algebra in such a way that \eg for all binary operations $\star$ on $H$, $H(f) \mathbin{\circ} \star_D = \star_C \mathbin{\circ} (H(f) \times H(f))$, for any arrow $f : C \rightarrow D$ in \cat{C}. This is because the structure maps, being arrows in $\prs{C}$, are natural transformations. Naturality  in particular means that for each $f : C \rightarrow D$ in $\cat{C}$, the map $H(f)$ preserves the Heyting structure.

By contrast, exponentials are not computed pointwise but by the formulas
\begin{gather*}
H^I(C) = \Hom{}{\y C \times I}{H} \\
H^I(f) : \eta \mapsto \eta \mathbin{\circ} (\y f \times 1_I),
\end{gather*}
where $\y C$ denotes the contravariant functor $\Hom{\cat{C}}{-}{C}$. 
The induced Heyting structure on $H^I$ is the pointwise one at each component. In particular, for any $\eta , \mu : \y C \times I \rightarrow H$, 
\begin{align*}
\eta \leq \mu\ (\text{in } H^I(C))&  \mathiff \eta_D \leq \mu_D, \text{ for each }\ D \in \cat{C} \\
& \mathiff\eta_D(f,b) \leq \mu_D(f,b)\ (\text{in}\ H(D)), \text{for each } f : D \rightarrow C, b \in I(D).
\end{align*}

Since we are mainly interested in adjoints between ordered structures, for any two order-preserving maps $\eta : H \leftrightarrows G : \mu$ between internal partial orderings $H,G$ in $\prs{C}$, $\eta \dashv \mu$ means that $\eta_C \dashv \mu_C$ at each component $C$. That is to say
\[
\eta_C(x) \leq y \mathiff x \leq \mu_C(y),
\]
for all $x \in H(C)$, $y \in G(C)$.

The natural transformation $\Delta_I : H \rightarrow H^I$ (henceforth $\Delta$) determines for each $x \in H(C)$ a natural transformation $\Delta_C(x) : \y C \times I \rightarrow H$ with components
 \[
\Delta_C(x)_D(f, a) = H(f)(x).
\]
Its right adjoint $\forall_I : H^I \rightarrow H$ (henceforth $\forall$) has components, for any $\eta \in \Hom{}{\y C \times I}{H}$,
\[
\forall_C(\eta) =  \bigvee \{s \in H(C) \mid H(f)(s) \leq \eta_D(f,b),\ \text{for all}\ f : D \rightarrow C, b \in I(D)\},
\] 
where the join is taken in $H(C)$.
Dually, the left adjoint $\exists$ of $\Delta$ has components
\[
\exists_C(\eta) =  \bigwedge \{s \in H(C) \mid \eta_D(f,b) \leq H(f)(s),\ \text{for all}\ f : D \rightarrow C, b \in I(D)\}.
\]
(Note that for instance the condition on the underlying set of the join $\forall_C(\eta)$ expresses that $\Delta_C(s) \leq \eta$ as elements in $H^I(C)$, so these definitions are in accordance with the general definition of right adjoints to $\Delta_C$ given in the previous example.)

Lastly, each $H(C)$ really is a complete Heyting algebra in the usual sense of having arbitrary set-indexed meets and joins (so the previous definitions of $\forall$ and $\exists$ actually make sense). For any set $J$, the right adjoint $\forall_J : H(C)^J \longrightarrow H(C)$ can be found as follows. Consider the constant $J$-valued functor $\Delta J$ on \cat{C} (and constant value $1_J$ on arrows in \cat{C}). For any $C$ in \cat{C}, there is an isomorphism 
\[
\Hom{\cat{Sets}}{J}{HC} \cong \Hom{\widehat{\cat{C}}}{\y C \times \Delta J}{H}
\]
(natural in $J$ and $H$). Given a function $h : J \rightarrow HC$, define a natural transformation $\nu h : \y C \times \Delta J \rightarrow H$ to have components $(\nu h)_D(g,a) = H(g)f(a)$. Conversely, given a natural transformation $\eta$ on the right, define a function $f\eta :  J \rightarrow HC$ by $f\eta(a) = \eta_C(1_C,a)$. These assignments are mutually inverse. Moreover, the map that results from composing $\Delta_J : HC \rightarrow H^{\Delta J}(C)$ with that isomorphism is computed as 
\[
f(\Delta_C(x))(a) = \Delta_C(x)_C(1_C,a) = H(1_C)(x) = x,
\]
so that for any $x \in HC$, $\Delta_C(x)$ is the constant $x$-valued map on $J$. This justifies taking the right adjoint to $\Delta_J$ as the sought right adjoint of the diagonal map $HC \rightarrow H(C)^J$. 

Indeed, for exponents $\Delta J$ the formula for the right adjoint to $\Delta_C$, for instance, takes the familiar form met in the previous example
\[
\forall_C(\eta) =  \forall_J(f\eta) = \bigwedge_{a \in J} f\eta(a) = \bigwedge_{a \in \Delta J(C)}  \eta_C(1_C,a),
\]
or 
\[
\forall_J(h) = \forall_C(\nu h) = \bigwedge_{a \in \Delta J(C)}  (\nu h)_C(1_C,a) = \bigwedge_{a \in J} h(a),
\] 
respectively.\hfill $\Box$
\\
\end{exam}


\normalsize

For convenience, let us recall the Heyting structure of $\soc{E}$ in more detail, as it will be useful later on.  It is uniquely determined by the natural isomorphism \eqref{DefSOC} and the Yoneda lemma which ``internalizes'' the (complete) Heyting structure of $\Hom{\tps{E}}{-}{\soc{E}}$ (coming from $\sub{\tps{E}}{-}$) to $\soc{E}$. Since each pullback functor $f^\ast : \sub{}{B} \rightarrow \sub{}{A}$, for $f : A \rightarrow B$ in $\tps{E}$, preserves the Heyting structure on $\sub{}{B}$, all the required diagrams that define the Heyting operations on $\soc{E}$ necessarily commute.

The top element is $\top : 1 \rightarrow \soc{E}$, which by the previous considerations is the classifying map of the identity on the terminal object. The bottom element is the characteristic map of the monomorphism $0 \rightarrowtail 1$, where 0 is the initial object of $\tps{E}$. Meets 
\[
\land : \soc{E} \times \soc{E} \longrightarrow \soc{E}
\]
are given as the classifying map of  $\langle \top , \top \rangle : 1 \longrightarrow \soc{E} \times \soc{E}$, which is the classifying map of the pullback of $\langle 1 , \top u  \rangle$ and $\langle \top u  , 1 \rangle$ ($u : \soc{E} \rightarrow 1$ is the canonical map):
\[
\xymatrix{
1 \ar[rr]^\top \ar[d]_\top && \soc{E} \ar[d]|{\langle 1 , \top u  \rangle}\\
\soc{E} \ar[rr]_-{\langle \top u  , 1 \rangle} && \soc{E} \times \soc{E} 
}
\]
viewed as subobject of $\soc{E} \times \soc{E}$; while $\langle 1 , \top u  \rangle$ and $\langle \top u  , 1 \rangle$ in turn arise as the subobjects classified by $\pi_2$ and $\pi_1$, respectively.
In a similar way, joins are constructed as  classifying map of the image of the map 
\[
[\langle 1 , \top u_{\soc{E}} \rangle , \langle \top u_{\soc{E}} , 1 \rangle] : \soc{E} + \soc{E} \longrightarrow \soc{E} \times \soc{E}.
\] 
 Implication is given as the classifying map of the equalizer
\[
 \xymatrix{
 E \ar@{ >->}[r] & \soc{E} \times \soc{E} \ar@<0.75ex>[r]^-{\land} \ar@<-0.75ex>[r]_-{\pi_1} & \soc{E} .  \\
}	
 \]
The classifying map can be factored as follows, where the two squares are pullbacks:
\[
\xymatrix{
E \ar[r] \ar@{ >.>}[d] & \soc{E} \ar[r] \ar[d]|{\Delta_{\soc{E}}} & 1 \ar[d]^\top\\
\soc{E} \times \soc{E} \ar[r]_{\langle \land , \pi_1 \rangle} & \soc{E} \times \soc{E} \ar[r]_-{\delta_{\soc{E}}} & \soc{E}
}
\]
following a standard description of equalizers.\foot{Actually, the Yoneda argument determines $\Rightarrow$ as the classifying map of the subobject $\forall_{\langle \top u_{\soc{E}} , 1 \rangle}(\top)$ of $\soc{E} \times \soc{E}$, where the latter is precisely the said equalizer.}

Using the Yoneda principle one also obtains indexed meets and joins as adjoints to the map $\Delta_I : \soc{E} \rightarrow \soc{E}^I$. They are essentially provided by the fact that, for any topos $\tps{E}$, and any arrow $f :A \rightarrow B$ in $\tps{E}$, the pullback functor 
 \[
 f^\ast : \sub{\tps{E}}{B} \longrightarrow \sub{\tps{E}}{A}
 \]
 has both a right and a left adjoint. Adding a parameter $X$ yields that 
 \[
 (1_X \times f)^\ast : \sub{\tps{E}}{X \times B} \longrightarrow \sub{\tps{E}}{X \times A} 
 \]
  restricts to a functor
 \[
 \Hom{\tps{E}}{X}{\soc{E}^B} \longrightarrow  \Hom{\tps{E}}{X}{\soc{E}^A}
 \]
 by the isomorphisms
 \[
 \sub{\tps{E}}{X \times Y} \cong \Hom{\tps{E}}{X \times Y}{\soc{E}} \cong \Hom{\tps{E}}{X}{\soc{E}^Y}.
 \]
 These are natural in $X$ and so by Yoneda provide a map 
 \[
 \soc{E}^B \longrightarrow  \soc{E}^A,
 \]
 which is precisely $\soc{E}^f$.

In particular, $\Delta_I$ arises in this way from pullback along the projection $\pi_1 : X \times I \rightarrow X$:
\[
\pi_1^\ast : \sub{\tps{E}}{X} \longrightarrow \sub{\tps{E}}{X \times I}, 
\]
that is by applying the previous argument to the map $u_I : I \longrightarrow 1$, as required.
The external adjoints of $\pi_1^\ast$ induce the required internal adjoints of $\soc{E}^{u_I} = \Delta_I$.
\\

\normalsize

As is well-known, one can interpret (intuitionistic) higher-order logic \wrt  this algebraic structure on $\soc{E}$ \cite{lambekscott88, maclanemoerdijk92}.  In particular, each formula $\gmid \varphi$, where $\Gamma = (\typrow{x}{A}{,})$ is a suitable variable context for $\varphi$, is recursively assigned an arrow 
\[
\srow{A}{\times} \xrightarrow{\sem{\varphi}} \soc{E}
\]
in $\tps{E}$. Connectives and quantifiers are interpreted by composing with the evident Heyting structure maps of $\soc{E}$ described above. 
\normalsize
For instance, $\sem{x : A \mid \forall y . \varphi}$ is the arrow
 \[
\sem{A} \xrightarrow{  \lambda_{\sem{B}} \sem{\varphi} } \soc{E}^{\sem{B}} \xrightarrow{\forall_{\sem{B}}} \soc{E},
 \]
 where $\lambda_{\sem{B}} \sem{\varphi}$ is the exponential transpose of 
 \[
 \sem{\varphi} : \sem{A} \times \sem{B} \longrightarrow \soc{E}.
 \]
In particular, the equality predicate on each type $M$ is interpreted as the classifying map $\delta_{\sem{M}}$ of the diagonal
 \[
 \langle 1_{\sem{M}} , 1_{\sem{M}} \rangle : {\sem{M}}  \longrightarrow {\sem{M}} \times {\sem{M}}.
 \] 


\begin{exam}
When $\tps{E} = \cat{Sets}$, and $\soc{\cat{Sets}} = \cat{2}$, then the right adjoint $\forall_I$ to $\Delta_I : \cat{2} \rightarrow \cat{2}^I$ is by definition required to satisfy 
\[
\Delta_I(x) \leq f \mathiff x \leq \forall_I(f),
\]
which holds just in case $\forall_I$ satisfies
\[
\forall_I(f) = 1 \mathiff f(i) = 1,\ \text{ for all }\ i \in I. 
\]
Equivalently,
\[
\forall_I(S) = 1 \mathiff S = I,
\]
where $S \subseteq I$. Given a formula $x : X \mid \varphi$, and an interpretation $\sem{X} \xrightarrow{\sem{\varphi}} \cat{2}$, then $\lambda_{\sem{X}}\sem{\varphi} : 1 \rightarrow \cat{2}^{\sem{X}}$ picks out the subset $S$ of $\sem{X}$ whose characteristic map is $\sem{\varphi}$, \ie the set of objects in $\sem{X}$ that satisfy $\varphi$. Thus $\sem{\forall x . \varphi} = 1$ if and only if $S = \sem{X}$, as expected.
\hfill $\Box$
\end{exam}

\normalsize

In principle, these definitions make sense for any Heyting algebra $H$ in $\tps{E}$ in place of $\soc{E}$, except for interpreting equality, since there is no notion of classifying map available for arbitrary $H$. We present below a way in general to canonically interpret equality for arbitrary $H$, closely connected to the treatment of modal operators.

\begin{mydef}
In any topos $\tps{E}$, a \emph{frame} $H$ in $\tps{E}$ is a complete Heyting algebra $H$ in $\tps{E}$. A \emph{frame homomorphism} $f : H \rightarrow G$ is a map $f$ in $\tps{E}$ that is internally $\bigvee,\land$-preserving. 
\end{mydef}

For instance in any topos $\tps{E}$ the object $1+1$ is an internal Boolean algebra, and thus a frame. Here, $\tau : 1+1 \rightarrow \soc{E}$ is the classifying map of the first coprojection.



\begin{exam}\label{FrameEx}
The prototypical frame is the collection of open sets $\ops{X}$ of a topological space $X$. The set $\ops{X}$ is a complete Heyting algebra, as is $\pow{X}$. However, arbitrary meets in $\ops{X}$ are in general not mere intersections.  That is to say, the inclusion 
\[
i : \ops{X} \hookrightarrow \pow{X}
\] 
does not preserve them. This exhibits $\ops{X}$ as a subframe of $\mathcal{P}(X)$ rather than a sub-Heyting algebra. The example also illustrates why the notion frame homomorphism matters at all. Note also that every frame map $f : H \rightarrow G$ has a right adjoint $f_\ast$, defined for any $y \in G$ as
\begin{equation}\label{TAdFM}
f_\ast(y) = \bigvee \{x \in H \mid f(x) \leq y\}.
\end{equation}
The right adjoint to the inclusion $i$ is the interior operation on the topological space $X$, which determines, in accordance with the formula for $f_\ast$, the largest open subset (\wrt $X$) of an arbitrary subset of $X$.

A related and more elementary example is the set inclusion $\cat{3} \hookrightarrow \cat{4}$ of the three element Heyting algebra into the four element Boolean algebra, as indicated in:
\[
\xymatrix{
&11&  &&& 11\\
&&10 \ar[ul]&\hookrightarrow&01\ar[ur]&&10\ar[ul] \\
&00\ar[ur]&&&&00\ar[ul]\ar[ur]
}
\]
(\cat{3} may be thought of as the open set structure of the Sierpi\'nski space.)
The inclusion does not preserve the implication $10 \rightarrow 00$:
\[
10 \rightarrow 00 = 00,\ \text{in}\ \cat{3}
\]
while 
\[
10 \rightarrow 00 = 01,\ \text{in}\ \cat{4}.
\]
Since $i$ preserving arbitrary meets is equivalent to saying that $i$ preserves implications,  $\cat{3}$ is included in $\cat{4}$ as a subframe rather than as a sub-Heyting algebra. \hfill $\Box$

\end{exam}

%

In a topos $\tps{E}$ the frame $\soc{E}$ plays a distinguished role:

\begin{lem}\label{InFrame}
In any topos $\tps{E}$, the subobject classifier $\soc{E}$ is the initial frame. That is to say, for every frame $H$ in $\tps{E}$, there is a unique frame map $i : \soc{E} \longrightarrow H$. Moreover, the right adjoint $\tau$ of $i$ is the classifying map of the top element $\top_H : 1 \longrightarrow H$ of $H$.
\end{lem}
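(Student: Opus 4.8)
The plan is to obtain $i$ as a \emph{left} adjoint: I would first pin down the candidate for $\tau$, then define $i$ to be its left adjoint, and read off both the frame-homomorphism property of $i$ and its uniqueness directly from the adjunction. To start, since $\top_H : 1 \rightarrowtail H$ is monic (any arrow out of the terminal object is), it has a classifying map $\tau := \chi_{\top_H} : H \to \soc{E}$; in the internal language this is the predicate $\tau(y) \equiv (y = \top_H)$, i.e.\ for $x : A \to H$ the composite $\tau x$ classifies the equalizer of $x$ with the constant map at $\top_H$.

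Next I would define $i : \soc{E} \to H$ by $i(p) := \bigvee_{u \in U_p}\top_H = \bigvee\{\, y \in H \mid y = \top_H \wedge p\,\}$, where $U_p \rightarrowtail 1$ is the subobject classified by $p$; this join exists because $H$ is a frame, and externally it is the composite of $\top_H$ with the indexed join $\exists$ along $U_p \to 1$ in the sense of the indexed joins discussed above. The adjunction $i \dashv \tau$ then falls out of the universal property of joins: for $p \in \soc{E}$ and $y \in H$ one has $i(p) \le y$ iff $\forall u \in U_p.\ \top_H \le y$, iff $p \Rightarrow (y = \top_H)$, iff $p \le \tau(y)$, where the middle step uses that $\forall$ over the subterminal object $U_p$ of a predicate not mentioning the bound variable is simply implication by $p$. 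In particular $\tau$ is the right adjoint of $i$, so $i$ preserves all joins.

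It then remains to check that $i$ preserves finite meets and that it is unique. Preservation of $\top$ is immediate since $U_\top = 1$. For binary meets, $U_{p\wedge q}$ is the intersection $U_p \cap U_q$, hence --- both factors being subterminal --- also the product $U_p \times U_q$, so the infinite distributive law valid in any frame gives
\[
i(p) \wedge i(q) = \Bigl(\bigvee_{u \in U_p}\top_H\Bigr) \wedge \Bigl(\bigvee_{v \in U_q}\top_H\Bigr) = \bigvee_{(u,v)\in U_p\times U_q}\top_H = \bigvee_{w \in U_{p\wedge q}}\top_H = i(p\wedge q).
\]
For uniqueness, note that inside $\soc{E}$ itself $p = \bigvee_{u\in U_p}\top_{\soc{E}}$ (the classifying map of $U_p$ is tautologically $p$); hence any frame map $j : \soc{E} \to H$ satisfies $j(p) = \bigvee_{u\in U_p} j(\top_{\soc{E}}) = \bigvee_{u\in U_p}\top_H = i(p)$, so $j = i$. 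Together with the first two paragraphs this also yields the ``moreover'' clause.

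I expect the only real subtlety to be the internal-language bookkeeping: making the $U_p$-indexed joins precise and justifying the equivalences ``$\forall u\in U_p.\,\psi \equiv (p\Rightarrow\psi)$'' and ``$\bigvee_{u\in U_p}\top \equiv p$'', say via Kripke--Joyal, together with the routine but load-bearing use of infinite distributivity of $H$ in the binary-meet step. Everything else is a straightforward unwinding of the relevant universal properties.
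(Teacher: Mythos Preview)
Your argument is correct and is essentially the standard proof; the paper itself does not give a proof but simply refers to \cite{johnstone02}, C1.3, where the construction of $i$ as $p \mapsto \bigvee_{U_p}\top_H$ and the verification of $i \dashv \tau$ proceed just as you outline. The only delicate points are exactly the ones you flag---making the $U_p$-indexed joins and the equivalence $\forall u\in U_p.\,\psi \Leftrightarrow (p\Rightarrow\psi)$ precise in the internal language---and these are routine once one unwinds the definition of $\bigvee_I$ as left adjoint to $\Delta_I$.
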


We refer to \cite{johnstone02} (C1.3) for the proof.

We will  mainly be interested in those frames $H$ for which the map $i : \soc{E} \rightarrow H$ is monic, to which we will refer as \emph{faithful}.\foot{A frame $H$ is faithful in this sense iff the inverse image part of the canonical geometric morphism $\sh{\tps{E}}{H} \longrightarrow \tps{E}$ is faithful (see section \ref{ReprAlgMod}).}  This map $i : \soc{E} \rightarrow H$ will play a crucial role both in modelling equality and the modal operator on $H$.  Looking ahead, 
suppose given a suitable (intuitionistic) higher-order modal theory (as in section \ref{HOMLModels}). Then we shall interpret equality on a type $A$ \wrt an $H$-valued model in a topos $\tps{E}$ as the composite of $i$ with the usual classifying map of equality:
\[
\sem{A} \times \sem{A} \xrightarrow{\delta_{\sem{A}}} \soc{E} \xrightarrow{i} H.
\]
The semantics thus obtained is not sound \wrt standard higher-order intuitionistic logic; in particular, function and propositional extensionality fail (as we shall show by providing counterexamples).  On the other hand, one can restore soundness by taking into account the following naturally arising modal operator.

\begin{lem}
Given a frame $H$ in a topos $\tps{E}$, let $i\dashv\tau$ be the canonical adjunction described in lemma \ref{InFrame},
\[
i : \soc{E} \leftrightarrows H : \tau.
\]
The composite $i\mathbin{\circ}\tau$ is then an S4 modality on $H$.
\end{lem}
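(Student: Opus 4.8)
The plan is to read off ``S4 modality'' as: a left exact comonad on the internal poset $H$, that is, an order-preserving map $\Box : H \to H$ that is deflationary ($\Box x \leq x$), idempotent ($\Box\Box x = \Box x$), and preserves finite meets ($\Box\top_H = \top_H$ and $\Box(x \wedge y) = \Box x \wedge \Box y$); logically these clauses are precisely the schemata $\Box\varphi \to \varphi$ and $\Box\varphi \to \Box\Box\varphi$, the $\mathbf{K}$-axiom, and the necessitation rule. All of this is expressible by commuting diagrams in $\tps{E}$, and by \lemref{InFrame} the adjunction $i \dashv \tau$ is an \emph{internal} one; so it is harmless to argue (as I would) with the externalised order-preserving maps $i_X : \Hom{\tps{E}}{X}{\soc{E}} \leftrightarrows \Hom{\tps{E}}{X}{H} : \tau_X$, which form an ordinary poset adjunction natural in $X$, and then reassemble by Yoneda. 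Set $\Box = i \mathbin{\circ} \tau$.

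First I would dispatch the comonad axioms, which fall straight out of $i \dashv \tau$. Being a composite of order-preserving maps, $\Box$ is order-preserving. The counit of $i \dashv \tau$ is the inequality $i\tau(x) \leq x$, i.e.\ $\Box x \leq x$. The unit is $x \leq \tau i(x)$; instantiating it at $\tau(y)$ and applying the order-preserving map $i$ gives $i\tau(y) \leq i\tau i\tau(y)$, i.e.\ $\Box y \leq \Box\Box y$. For idempotency I would invoke the triangle identities, which for \emph{any} adjunction force $i\tau i = i$ and $\tau i \tau = \tau$; hence $\Box\Box = i\tau i\tau = i(\tau i \tau) = i\tau = \Box$.

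It then remains to check that $\Box = i\tau$ is left exact, i.e.\ preserves finite meets. Here $\tau$, being an (internal) right adjoint, preserves all meets, in particular $\top_H$ and binary meets; and $i$, being a \emph{frame} homomorphism by \lemref{InFrame}, preserves finite meets by definition. Composing, $\Box(x \wedge y) = i(\tau x \wedge \tau y) = i\tau x \wedge i\tau y = \Box x \wedge \Box y$, and $\Box\top_H = i(\tau\top_H) = i\top_{\soc{E}} = \top_H$, the last step because a frame map preserves the top element. This exhibits $i \mathbin{\circ} \tau$ as a left exact comonad on $H$, which is what ``S4 modality'' means here (cf.\ the analogous remark about $\Delta_A\Gamma_A$ above).

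The argument is purely formal, so the only thing that could cause trouble is bookkeeping at the internal/external boundary: one must make sure that ``left exact comonad on the internal poset $H$'' is spelled out by exactly those diagrams that the \emph{internal} adjunction of \lemref{InFrame} and the \emph{internal} frame-homomorphism axioms for $i$ make commute. Granting that --- and \lemref{InFrame} is stated so as to supply it --- every clause above reduces to one of the three slogans ``right adjoints preserve meets'', ``frame maps preserve finite meets'', and ``the triangle identities''.
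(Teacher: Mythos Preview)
Your proof is correct and follows the same line as the paper's: $\tau$ preserves finite meets as a right adjoint, $i$ preserves finite meets as a frame map, so $i\tau$ is left exact; and the comonad laws for $i\tau$ are immediate from the adjunction $i \dashv \tau$. The paper compresses all of this into two sentences, whereas you unpack the counit, the derived comultiplication inequality, and the idempotency via the triangle identities, and you add a remark about translating between the internal and external formulations --- but the underlying argument is identical.
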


\begin{proof}
The composite $i\mathbin{\circ}\tau$ preserves finite meets because both components do.  
In virtue of $i \dashv \tau$, the composite is a comonad, which gives the S4 laws. 
\end{proof}


\section{Higher-order intuitionistic S4}\label{HOMLModels}

The formal system of higher-order modal logic considered here is simply the union of the usual axioms for higher-order logic and S4. The higher-order part is a version of type theory (cf. \cite{jacobs99, johnstone02, lambekscott88}).  Types and terms are defined recursively. A higher-order language $\mathcal{L}$ consists of a  collection of basic types $A,B, \dots$ along with basic terms (constants) $a : A, b : B$.  To stay close to topos-theoretic formulations, we assume the following type and term forming operations that inductively specify the collection of types and terms of the language:
\begin{itemize}
\item There are basic types $1$, $\textsf{P}$
\item If $A$, $B$ are types, then there is a type $A \times B$ 
\item If $A$, $B$ are types, then there is a type $A^B$
\end{itemize}

Terms are recursively constructed as follows. Here we assume, for every type $A$, an infinite set of variables of type $A$, written as $x:A$, to be given. We follow \cite{jacobs99} in writing $\gmid t : B$, for $\Gamma = (\typrow{x}{A}{,})$, involving at least all the free variables in the term $t$. A context $\Gamma$ may also be empty. Formally, every term $t$ always occurs in some variable context $\Gamma$ and is well-typed only \wrt  such a context. This is important to understand the recursive clauses below. To simplify notation, however, we omit $\Gamma$ if it is unspecified and the same throughout a recursive clause.

\begin{itemize}
\item There are distinguished terms $\emptyset \mid \ast : 1$ and $\emptyset \mid \top , \bot: \textsf{P}$
\item If $ t : A$ and $s : B$ are terms, then $ \langle t,s \rangle : A \times B$ is a term
\item If $ t : A \times B$ is a term, then there are terms $ \pi_1 t : A$ and $ \pi_2 t : B$
\item If $\gmid t : A$ is a term and $y : B$ a variable in $\Gamma$, then there is a term $\Gamma[y:B] \mid \lambda y . t : A^B$; where $\Gamma[y:B]$ is the context that results from $\Gamma$ by deleting $y:B$.
\item If $ t : A^B$ and $ s : B$ are terms, then $ \app{t}{s} : A$ is a term.
\item For any two terms $ t : \textsf{P}$, $ s : \textsf{P}$ there are terms $ t \land s : \textsf{P}$, $ t \lor s : \textsf{P}$, $ t \Rightarrow s : \textsf{P}$.
\item If $\Gamma , y : B \mid t : \textsf{P}$ is a term, then $\Gamma \mid \forall y  . t : \textsf{P}$ is a term; and similarly for $\Gamma \mid \exists y  . t : \textsf{P}$
\item If $ t : A$ and $ s : A$ are terms, then $ s =_A t : \textsf{P}$ is a term.
\item If $ t : \textsf{P}$ is a term, then $ \Box t : \textsf{P}$ is a term.
\end{itemize}

One also assumes the usual structural rules of weakening of the variable context (adding dummy variables), contraction, and permutation. We may also assume that each variable declaration occurs only once in a context.

As usual, we define a deductive system by specifying a relation $\vdash$ between terms of type $\textsf{P}$. The crucial difference between the standard formulation of intuitionistic higher-order logic and the present one are the modified extensionality principles marked with ($\ast$).

\begin{itemize}
\item $ \varphi \vdash \varphi$
\item $\displaystyle\frac{ \varphi \vdash \psi\ \ \ \  t : A}{ \varphi[t/x] \vdash \psi[t/x]}$, for $x : A$ (similarly for simultaneous substitution)

\item $\displaystyle\frac{\varphi \vdash \psi \hspace{1em}  \psi \vdash \vartheta}{ \varphi \vdash \vartheta}$
\vspace{8pt}

\item $ \top \vdash x =_A x$, where $x : A$
\item $ \varphi \land x =_A x' \vdash \varphi[x'/x]$. where $x:A, x':A$
\item[($\ast$)] $ \Box \forall x  (f(x) =_B g(x)) \vdash f =_{B^A} g$, for terms $x:A$ and $f,g : B^A$
\item[($\ast$)] $ \Box (p \Leftrightarrow q) \vdash p =_{\textsf{P}} q$, for terms $p,q: \textsf{P}$
\vspace{8pt}

\item $ \top \vdash \ast =_1 x$, where $x:1$
\item $ \top \vdash \pi_1\langle x,y \rangle =_A x$ and $ \top \vdash \pi_2\langle x,y \rangle =_B y$, where $ x :A$ and $ y : B$
\item $ \top \vdash \langle \pi_1 w , \pi_2 w \rangle =_{A \times B} w$, for $ w : A \times B$

\item $\Gamma [x:A] \mid  \top \vdash \app{\lambda x . t}{x'} =_B t[x'/x]$, for  $\gmid t : B$ and $x':A$
\item $ \top \vdash \lambda x .\app{w}{x} =_{B^A} w$, for $w : B^A$
\vspace{8pt}

\item $ \varphi \vdash \top$, for any $ \varphi : \textsf{P}$
\item $ \bot \vdash \varphi$, for any $ \varphi : \textsf{P}$
\item $ \varphi \vdash \psi \land \vartheta \mathiff  \varphi \vdash \psi$ and $\varphi \vdash \vartheta$
\item $ \varphi \lor \psi \vdash \vartheta \mathiff  \varphi \vdash \vartheta$ and $\psi \vdash \vartheta$
\item $ \varphi \vdash \psi \Rightarrow \vartheta \mathiff  \varphi \land \psi \vdash \vartheta$
\item $ \gmid \exists x.  \varphi \vdash \psi \mathiff \Gamma, x:A \mid \varphi \vdash \psi$
\item $ \gmid \varphi \vdash \forall x.  \psi \mathiff \Gamma , x:A \mid \varphi \vdash \psi$
\end{itemize}

\begin{mydef}
A \emph{theory} in a language $\mathcal{L}$ as specified above consists of a set of closed sentences $\alpha$, \ie terms of type $\textsf{P}$ with no free variables (well-typed in the empty context), and which may be used as axioms in the form $\gmid \top \vdash \alpha$. 
\end{mydef}

\begin{rem}
Adding the axiom 
\[
\gmid \top \vdash \forall p . p \lor \neg p
\] 
makes the logic classical. 

As is well-known there are more concise formulations of higher-order systems. The particular one chosen here is very close to the definition of a topos as a cartesian closed category with subobject classifier. One does not really need all exponential types and their constructors, however, but only those of the form $\textsf{P}^A$, for every type $A$, which we write $\textsf{P}A$ and call \emph{powertypes}. Along these lines one may define:
\[
\{x : A \mid \varphi\} :\equiv \lambda x. \varphi : \textsf{P}A,
\]
where $x : A \mid \varphi : \textsf{P}$. On the other hand, for $\sigma : \textsf{P}A$ and $x : A$, set 
\[
x \in \sigma :\equiv \app{\sigma}{x}.
\]
According to the axioms for exponential terms, we have
\begin{align*}
x' : A & \mid \top \vdash x' \in \{x : A \mid \varphi\} = \varphi[x'/x] \\
& \mid \top \vdash \{x : A \mid x \in w\} = w.
\end{align*}
Thus one could instead take only types of the form $\textsf{P}A$, and the constructors $\{ \dots  \mid - \}$ and $\in$ as basic, along with the last two axioms. For further simplifications see \cite{johnstone02,lambekscott88}.
\end{rem}


Finally, the S4 axioms are the usual ones
\begin{itemize}
\item $\displaystyle \frac{\gmid \varphi \vdash \psi}{\gmid \Box \varphi \vdash \Box \psi}$
\item $\gmid \top \vdash \Box \top$
\item $\gmid \Box \varphi \land \Box \psi \vdash \Box (\varphi \land \psi)$ 
\item $\gmid \Box \varphi \vdash \varphi$
\item $\displaystyle \gmid \Box \varphi \vdash \Box\Box\varphi$
\end{itemize}

The first three axioms express that $\Box$, viewed as an operator, is a monotone finite meet preserving operation.  
The other two axioms are the $T$ and 4 axioms, respectively. Further useful rules provable from the axioms are necessitation
\[
\frac{\gmid \top \vdash \varphi}{\gmid \top \vdash \Box \varphi},
\]
and the axiom $K$:
\[
\gmid \Box (\varphi \Rightarrow \psi) \vdash \Box \varphi \Rightarrow \Box \psi.
\]
\\

\noindent
Although it is essentially obvious, for the sake of completeness we provide a definition of a model of this language in a topos.

\begin{mydef}\label{DefMod}
A \emph{model} of a higher-order modal type theory in a topos $\tps{E}$ consists of a faithful frame $H$ in $\tps{E}$, and an assignment $\sem{-}$ that assigns to each basic type $A$ in $\mathcal{L}$ an object $\sem{A}$ in such a way that 
\begin{itemize}
\item $\sem{1} = 1_\tps{E}$
\item $\sem{\textsf{P}} = H$

\item $\sem{A \times B} = \sem{A} \times \sem{B}$
\item $\sem{A^B} = \sem{A}^{\sem{B}}$.
\end{itemize}
Moreover, each term $\gmid t : B$ in $\mathcal{L}$, where $\Gamma = (\typrow{x}{A}{,})$ is a suitable variable context for $t$, is assigned an arrow 
\[
\sem{t} : \sem{\Gamma} \rightarrow \sem{B}
\]
recursively as follows (where $\sem{\Gamma}$ is short for $\srow{A}{\times}$ and $\sem{t}$ really means $\sem{\gmid t : B}$).

\begin{itemize}

\item Each constant $c : A$ in $\mathcal{L}$ is assigned an arrow 
\[
\sem{c} : 1_\tps{E} \rightarrow\sem{A}.
\]
In particular:
\begin{itemize}
\item[] $\sem{\top} = \top_H : 1_\tps{E} \longrightarrow H$
\item[] $\sem{\bot} = \bot_H : 1_\tps{E} \longrightarrow H$
\item[] $\sem{\ast : 1} = 1_{1_\tps{E}}$ (the identity arrow on the terminal object).
\end{itemize}

This extends to arbitrary terms-in-context as follows

\item For any constant $c : A$,  $\sem{\gmid c : A}$ is the arrow
\[
\sem{\Gamma} \xrightarrow{u} 1_\tps{E} \xrightarrow{\sem{c}} \sem{A}
\]

\item If $\gmid s : A$ and $\gmid t : B$ are terms, then $\sem{\gmid \langle s,t \rangle : A \times B}$ is the map
\[
\langle \sem{s} , \sem{t} \rangle : \sem{\Gamma} \rightarrow \sem{A} \times \sem{B}.
\]

\item If $\gmid t : A \times B$ is a term, then $\sem{\gmid \pi_1 t : A}$ is 
\[
\sem{\Gamma} \xrightarrow{\sem{t}} \sem{A} \times \sem{B} \xrightarrow{\pi_1} \sem{A},
\]
and similarly for $\pi_2 t $.

\item If $\gmid t : A$ is a term and $y : B$ a variable in $\Gamma$, then $\sem{\Gamma[y:B] \mid \lambda y . t : A^B}$ is 
\[
{\lambda_{\sem{B}}\sem{t}} : \sem{\Gamma[y:B]} \rightarrow A^{\sem{B}}
\]

\item If $\gmid t : A^B$ and $\gmid s : B$ are terms, then $\sem{\gmid \app{t}{s} : A}$ is 
\[
\langle \sem{t},\sem{s} \rangle : \sem{\Gamma} \rightarrow A^B \times B \xrightarrow{\varepsilon} A.
\]

\item For any two terms $\gmid p : \textsf{P}$, $\gmid q : \textsf{P}$, and $\star$ any of the connectives $\land, \lor , \Rightarrow$, $\sem{\gmid p \star q : \textsf{P}}$ is
\[
\sem{\Gamma} \xrightarrow{\langle \sem{p} , \sem{q} \rangle} H \times H \xrightarrow{\star} H,
\]
where in the last line $\star$ is the evident algebraic operation on $H$.

\item If $\Gamma , y : B \mid t : \textsf{P}$ is a term, then $\sem{\Gamma \mid \forall y . t : \textsf{P}}$ is 
\[
\sem{\Gamma} \xrightarrow{\lambda_{\sem{B}} \sem{t}} H^{\sem{B}} \xrightarrow{\forall_{\sem{B}}} H
\]
and similarly for $\sem{\Gamma \mid \exists y . t : \textsf{P}}$ via $\exists_{\sem{B}}$.

\item If $\gmid t : A$ and $\gmid s : A$ are terms, then $\sem{\gmid t =_A s : \textsf{P}}$ is the map
\[
\sem{\Gamma} \xrightarrow{\langle \sem{t} , \sem{s} \rangle} \sem{A} \times \sem{A} \xrightarrow{\delta_{\sem{A}}} \soc{E} \xrightarrow{i} H,
\]
where $i$ is the unique (monic) frame map.

\item If $\gmid t : \textsf{P}$ is a term, then $\sem{\gmid \Box t : \textsf{P}}$ is the map
\[
\sem{\Gamma} \xrightarrow{\sem{t}} H \xrightarrow{\tau} \soc{E} \xrightarrow{i} H,
\]
where $\tau$ is the classifying map of $\top_H : 1 \rightarrow H$, as described before.

\end{itemize}

\end{mydef}

\noindent
Before moving on, let us review some common examples

\begin{exams}

\begin{enumerate}\strut

\item A well-studied class of examples are structures induced by surjective geometric morphisms $f : \tps{F} \rightarrow \tps{E}$. If $\tps{F}$ is Boolean, then so is $f_\ast\soc{F}$. For instance, there are geometric morphisms
\[
\cat{Sets}^{|\cat{C}|} \longrightarrow \srp{C}
\]
induced by the inclusion $|\cat{C}| \rightarrow \cat{C}$. When $\cat{C}$ is a preorder, then this yields Kripke semantics for first-order modal logic. This case was originally studied in \cite{ghilardimeloni88, shehtmanskortsov91}.

Similarly, the canonical geometric morphism  
\[
\cat{Sets}/X \longrightarrow \sh{}{X}
\]
induced by the continuous inclusion $|X| \hookrightarrow X$ gives rise to sheaf models for classical first- (and higher-) order modal logic, studied in  \cite{awodeykishida08}. The exact structure of these examples will be discussed in more detail in section \ref{RecGeomCase} below.

\item More generally, by a well-known theorem of Barr, every Grothendieck topos $\tps{G}$ can be covered by a Boolean topos $\tps{B}$ in the sense that there is a surjective geometric morphism
\[
f : \tps{B} \longrightarrow \tps{G}.
\]
For $H = \gsoc{f}{B}$, this provides models in Grothendieck topoi.\foot{Cf. \eg \cite{maclanemoerdijk92}, IX.9. Actually, the geometric morphism $f$ can be extended to a surjective geometric morphism $\tps{E} \longrightarrow \tps{B} \longrightarrow \tps{G}$, where $\tps{E}$ is the topos of sheaves on a topological space, although $\tps{E}$ might not be Boolean (\cite{maclanemoerdijk92}, IX.11).}

\item Of course, in any topos $\tps{E}$ the subobject classifier $\soc{E}$ itself would do. However, as noted \eg in \cite{reyeszawadowski93, reyeszolfaghari96}, the resulting modal operator will be the identity on~$\soc{E}$.

\end{enumerate}

\end{exams}


\section{Soundness of algebraic semantics}

The given system of intuitionistic higher-order S4 modal logic is sound \wrt the semantics described in def.\ \ref{DefMod}. Except for the two extensionality principles, soundness is straightforward following known topos semantics.
The reason why plain propositional extensionality fails in our semantics is the interpretation of implication. In the general topos semantics based on $\soc{E}$ Heyting implication on $\soc{E}$ is given by the map 
\[
\soc{E} \times \soc{E} \xrightarrow{\langle \pi_1 , \land \rangle} \soc{E} \times \soc{E} \xrightarrow{\delta} \soc{E}
\]
that immediately implies propositional extensionality. By contrast, for an arbitrary frame $H$ we observe:

\begin{lem}\label{CExPropExt}
For an arbitrary topos $\tps{E}$, and a (faithful) frame $H$ in $\tps{E}$, it is not in general the case that 
\[
\xymatrix{
H \times H \ar[r]^-\Rightarrow \ar[d]|{\langle \pi_1 , \land \rangle} & H \\
H \times H \ar[r]^-{\delta_H} & \soc{E} \ar[u]_i\\
}
\]
commutes.
\end{lem}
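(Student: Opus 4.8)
The plan is to prove the lemma by exhibiting a single counterexample: a topos $\tps{E}$ and a faithful frame $H$ in it for which the two legs of the square are \emph{distinct} arrows $H \times H \to H$. I would take $\tps{E} = \cat{Sets}$, so that the whole square lives in $\cat{Sets}$ and it suffices to produce one pair of elements of $H$ on which the legs disagree.

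First I would unwind what each leg computes. Going across the top and down the right, we obtain internal Heyting implication, \ie the map $(p,q) \mapsto (p \Rightarrow q)$. Going down the left and along the bottom: $\langle \pi_1 , \land \rangle$ sends $(p,q)$ to $(p,\, p \land q)$; then $\delta_H \colon H \times H \to \soc{E}$, being the classifying map of the diagonal, sends $(p,\, p \land q)$ to the truth value of $p = p \land q$, equivalently of $p \le q$; and finally $i \colon \soc{E} \to H$ carries this value to $\top_H$ when $p \le q$ and to $\bot_H$ otherwise. Hence commutativity of the square would force $p \Rightarrow q \in \{\top_H, \bot_H\}$ for all $p, q$, which already fails as soon as $H$ is not a Boolean algebra.

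Concretely I would take $H = \cat{3} = \{\bot < a < \top\}$, the three-element Heyting algebra (the opens of the Sierpi\'nski space, cf.\ Example~\ref{FrameEx}); the unique frame map $i \colon \soc{E} = \cat{2} \to \cat{3}$ is the evident inclusion, hence monic, so $\cat{3}$ is a faithful frame. Now evaluate both legs at the global element $\langle \top, a \rangle \colon 1 \to H \times H$. The top-then-right leg yields $\top \Rightarrow a = a$. The left-then-bottom leg yields $i\bigl(\delta_H(\top,\, \top \land a)\bigr) = i\bigl(\delta_H(\top, a)\bigr)$; since $\top \ne a$ this is $i$ applied to the bottom element of $\soc{E}$, namely $\bot_H$. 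As $a \ne \bot_H$ in $\cat{3}$, the two composites differ on this global element, so the square does not commute.

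There is no genuine obstacle here; the only points requiring a little care are the correct identification of the left-bottom leg with ``transport into $H$, along $i$, of the truth value of $p \le q$'', and the trivial observation that in $\cat{Sets}$ two parallel maps are distinct as soon as they disagree on one global element of the domain (and more generally, in any topos, on one generalized element). The same phenomenon occurs for any frame of opens $\ops{X}$ of a space $X$ with a proper nonempty open $U$: taking $p = X$ and $q = U$ gives $p \Rightarrow q = U$, while the other leg returns $\bot = \varnothing \ne U$.
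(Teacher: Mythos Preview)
Your proof is correct and takes essentially the same approach as the paper: work in $\tps{E} = \cat{Sets}$ and exhibit a concrete frame $H$ together with a pair of elements on which the two composites disagree. The paper chooses $H = \pow{X}$ for $|X| > 1$ rather than your $\cat{3}$ (and argues with any $(U,V)$ such that $U \nsubseteq V$ but $U \Rightarrow V \neq \emptyset$); note in passing that your side remark ``already fails as soon as $H$ is not a Boolean algebra'' is slightly off, since the paper's $\pow{X}$ \emph{is} Boolean and still yields a counterexample---the relevant condition is simply that $H$ have more than two elements, so that some $\top \Rightarrow a = a$ lies strictly between $\bot_H$ and $\top_H$.
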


\begin{proof}

A counterexample may easily be found in the topos \cat{Sets} with subobject classifier \cat{2} and $H = \pow{X}$, for some set $X \neq 1$. The adjunction 
\[
i : \cat{2} \leftrightarrows \pow{X} : \tau
\]
($i \dashv \tau$)
is defined by
\[
i(x) = \begin{cases}
X, & \text{if}\  x = 1\\
\emptyset, &\text{if}\  x = 0\\
\end{cases}
\]
and
\[
\tau(U) = 1 \mathiff U = X.
\]
For any $U,V \in \pow{X}$,
\[
U \Rightarrow V = \bigcup \{W \in \pow{X} \mid W \cap U \subseteq V\}.
\] 
If $U \nsubseteq V$, then $U \neq U \cap V$, and so 
\[
i \delta\langle \pi_1,\land \rangle (U,V) = i\delta_{\pow{X}}(U, U \cap V) = i(0) = \emptyset.
\]
But $U \nsubseteq V$ does not in general imply $U \Rightarrow V = \emptyset$. (Consider \eg $V \subseteq U \Rightarrow V$, for $U \cap V \neq \emptyset$.) 
\end{proof}

As suggested by the example, the reason for the failure of plain propositional extensionality is that failure to be true (in the sense of $\top = X \nsubseteq U \Rightarrow V$) does not imply equality to $\bot$ in $H$. On the other hand, note that $\tau(U \Rightarrow V) = 0$, because $X \nsubseteq U \Rightarrow V$. This observation generalizes. Although $i\delta \langle \pi_1, \land \rangle =\ \Rightarrow$ fails in general, we have the following.

\begin{lem}\label{ImpLem}
In any topos $\tps{E}$, the diagram
\[
\xymatrix{
H \times H \ar[r]^-\Rightarrow \ar[d]|{\langle \pi_1 , \land \rangle} & H \ar[d]^\tau\\
H \times H \ar[r]^-{\delta_H} & \soc{E}
}
\]
 commutes, and thus
\[
{i \tau} \mathbin{\circ} \Rightarrow\ =\ i \delta_H \langle \pi_1 , \land \rangle.
\]
\end{lem}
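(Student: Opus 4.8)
The plan is to reduce the asserted commutativity to the universal property of $\soc{E}$ and then to an elementary Heyting-algebra identity, transported into $\tps{E}$. Both composites $\tau\mathbin{\circ}\Rightarrow$ and $\delta_H\mathbin{\circ}\langle\pi_1,\land\rangle$ are arrows $H\times H\to\soc{E}$, so by the natural bijection \eqref{DefSOC} it suffices to show that they classify the same subobject of $H\times H$; postcomposing the resulting equality with the frame map $i:\soc{E}\to H$ then gives the displayed consequence $i\tau\mathbin{\circ}\Rightarrow = i\delta_H\mathbin{\circ}\langle\pi_1,\land\rangle$.

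First I would identify the two subobjects explicitly. Since $\delta_H$ is the classifying map of the diagonal $\langle 1_H,1_H\rangle:H\rightarrowtail H\times H$, the composite $\delta_H\mathbin{\circ}\langle\pi_1,\land\rangle$ classifies the pullback of that diagonal along $\langle\pi_1,\land\rangle:H\times H\to H\times H$, and this pullback is exactly the equalizer of $\pi_1$ and $\land$ viewed as maps $H\times H\to H$ --- that is, the subobject that the paper has already singled out as the internal partial order $\leq_H$ on $H$. Similarly, since $\tau$ classifies $\top_H:1\rightarrowtail H$, the composite $\tau\mathbin{\circ}\Rightarrow$ classifies the pullback of $\top_H$ along $\Rightarrow:H\times H\to H$, which is the equalizer of $\Rightarrow$ and $\top_H\mathbin{\circ}\mathord{!}:H\times H\to 1\to H$.

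It then remains to show that these two equalizers coincide as subobjects of $H\times H$; testing against generalized elements, this amounts to checking that for every object $T$ of $\tps{E}$ and every pair $p,q:T\to H$ one has $p\land q = p$ if and only if $p\Rightarrow q = \top_H\mathbin{\circ}\mathord{!}_T$, all equalities being taken in $\Hom{\tps{E}}{T}{H}$. Here the internal Heyting structure does the work: because the operations on $H$ are given by arrows satisfying equational axioms, $\Hom{\tps{E}}{T}{H}$ is an (external) Heyting algebra with top element $\top_H\mathbin{\circ}\mathord{!}_T$, and the required equivalence is the standard fact that in any Heyting algebra $a\Rightarrow b = \top \mathiff a\leq b$ (use that $\top\leq x$ forces $x=\top$, together with the adjunction $a\land(-)\dashv(a\Rightarrow-)$ applied to $\top\leq a\Rightarrow b$). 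Hence the two subobjects agree, and classification gives $\tau\mathbin{\circ}\Rightarrow = \delta_H\mathbin{\circ}\langle\pi_1,\land\rangle$.

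The one point needing care --- the only real obstacle --- is the bookkeeping of the second paragraph: one must verify that pulling back $\top_H$ along $\Rightarrow$ (respectively the diagonal along $\langle\pi_1,\land\rangle$) really yields the stated equalizers, and that these are then compared correctly \emph{as subobjects} of $H\times H$ and not merely up to abstract isomorphism. Once that is in place the argument is just the elementary Heyting identity $p\leq q \mathiff (p\Rightarrow q)=\top$, read off in $\Hom{\tps{E}}{T}{H}$ via Yoneda --- equivalently, phrased directly in the internal language of $\tps{E}$.
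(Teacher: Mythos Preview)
Your proof is correct and follows essentially the same route as the paper: both arguments show that $\tau\mathbin{\circ}\Rightarrow$ and $\delta_H\mathbin{\circ}\langle\pi_1,\land\rangle$ classify the same subobject $(\leq)\rightarrowtail H\times H$, then invoke uniqueness of classifying maps. The paper is terser --- it simply asserts that the pullback of $\top_H$ along $\Rightarrow$ is $(\leq)$ ``by the definition of $\Rightarrow$'' --- whereas you spell out this step explicitly via the Heyting identity $p\leq q \iff (p\Rightarrow q)=\top$ in $\Hom{\tps{E}}{T}{H}$, which is a welcome clarification.
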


\begin{proof}
Consider the pullbacks
\begin{gather*}
\xymatrix{
(\leq) \ar[r] \ar[d] & 1 \ar@{=}[r] \ar[d]^\top & 1 \ar[d]^\top \\
H \times H \ar[r]_-\Rightarrow & H \ar[r]_\tau & \soc{E}
}
\\
\xymatrix{
(\leq)  \ar[r] \ar[d] & H \ar[r] \ar[d]^\Delta& 1\ar[d]^\top \\
H \times H \ar[r]_-{\langle \pi_1 , \land \rangle} & H \times H \ar[r]_-{\delta_H} & \soc{E} 
}
\end{gather*}
whence the claim follows from uniqueness of classifying maps. The left-hand square in the first diagram is a pullback by the definition of $\Rightarrow$, while the second diagram is the definition of the induced partial ordering on $H$ as the equalizer of $\pi_1$ and $\land$.
\end{proof}

\noindent
This argument neatly exhibits the conceptual role played by the modal operator $\tau$ (more exactly, the adjunction $i \dashv \tau$). The soundness proof is essentially a corollary to that.

\begin{cor}\label{ModExtProp}
Modalized propositional extensionality
\[
p : \textsf{P}, q : \textsf{P}\ |\ \Box(p \Leftrightarrow q) \vdash p =_\textsf{P} q
\]
is true in any model $(\tps{E},H)$.
\end{cor}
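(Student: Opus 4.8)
The plan is to unwind both sides of the claimed sequent into explicit arrows $H\times H\to H$ and then to apply Lemma~\ref{ImpLem} to see that they are in fact \emph{equal}, so that the entailment holds a fortiori. Recall that a sequent $\Gamma\mid\varphi\vdash\psi$ is true in $(\tps{E},H)$ exactly when $\sem{\varphi}\leq\sem{\psi}$ in the (pointwise) Heyting algebra $\Hom{\tps{E}}{\sem{\Gamma}}{H}$. Here $\Gamma=(p:\textsf{P},q:\textsf{P})$, so $\sem{\Gamma}=H\times H$ and $\sem{p}=\pi_1$, $\sem{q}=\pi_2$. By Definition~\ref{DefMod}, since $\langle\pi_1,\pi_2\rangle=1_{H\times H}$, we get $\sem{p=_\textsf{P}q}=i\circ\delta_H\colon H\times H\to H$. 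Unpacking the abbreviation $p\Leftrightarrow q:\equiv(p\Rightarrow q)\land(q\Rightarrow p)$ and the clause for $\Box$, we get $\sem{p\Leftrightarrow q}=\land\circ\langle{\Rightarrow},\,{\Rightarrow}\circ\langle\pi_2,\pi_1\rangle\rangle=:\ell$ and $\sem{\Box(p\Leftrightarrow q)}=i\circ\tau\circ\ell$. Thus the task reduces to showing $i\tau\ell=i\delta_H$ in $\Hom{\tps{E}}{H\times H}{H}$.

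Next I would use that $i\tau$ preserves finite meets (it is an S4 modality by the lemma preceding Section~\ref{HOMLModels}), so $i\tau\circ\land=\land\circ(i\tau\times i\tau)$; this gives, in $\Hom{\tps{E}}{H\times H}{H}$,
\[
i\tau\ell=(i\tau\circ{\Rightarrow})\,\land\,(i\tau\circ{\Rightarrow}\circ\langle\pi_2,\pi_1\rangle).
\]
By Lemma~\ref{ImpLem}, $i\tau\circ{\Rightarrow}=i\circ\delta_H\circ\langle\pi_1,\land\rangle$. Substituting, and using commutativity of $\land$ to simplify $\langle\pi_1,\land\rangle\circ\langle\pi_2,\pi_1\rangle=\langle\pi_2,\land\rangle$, yields
\[
i\tau\ell=\bigl(i\circ\delta_H\circ\langle\pi_1,\land\rangle\bigr)\,\land\,\bigl(i\circ\delta_H\circ\langle\pi_2,\land\rangle\bigr).
\]
Since $i$ is a frame map it preserves binary meets, so this equals $i$ applied to $\bigl(\delta_H\langle\pi_1,\land\rangle\bigr)\land\bigl(\delta_H\langle\pi_2,\land\rangle\bigr)$, a map $H\times H\to\soc{E}$.

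It then remains to identify this last meet in $\sub{\tps{E}}{H\times H}\cong\Hom{\tps{E}}{H\times H}{\soc{E}}$. The subobject classified by $\delta_H\circ\langle\pi_1,\land\rangle$ is the pullback of the diagonal $\Delta_H\rightarrowtail H\times H$ along $\langle\pi_1,\land\rangle$, i.e.\ the subobject "$a=a\land b$" of $H\times H$; by the equalizer description of the internal order on $H$ (as the equalizer of $\land$ and $\pi_1$) this is exactly "$a\leq b$", and symmetrically $\delta_H\circ\langle\pi_2,\land\rangle$ classifies "$b\leq a$". Their intersection is the diagonal "$a=b$", classified by $\delta_H$ itself; this is the standard Heyting-algebra identity ``$a\leq b$ and $b\leq a$ iff $a=b$'', valid for the internal Heyting algebra $H$, which I would verify either in the internal language of $\tps{E}$ or on generalized elements $\langle a,b\rangle\colon X\to H\times H$ using the pointwise Heyting structure of $\Hom{\tps{E}}{X}{H}$. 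Hence $\bigl(\delta_H\langle\pi_1,\land\rangle\bigr)\land\bigl(\delta_H\langle\pi_2,\land\rangle\bigr)=\delta_H$, so $i\tau\ell=i\delta_H=\sem{p=_\textsf{P}q}$, and the sequent is true — indeed with equality of the two interpretations, not merely entailment.

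There is no genuine obstacle: once Lemma~\ref{ImpLem} is in hand this is essentially bookkeeping with classifying maps. The only point that needs a little care is the passage from external diagram chasing to the order-theoretic facts about $H$ — the $\land$-preservation of $i\tau$ and the identity ``$a\leq b\ \&\ b\leq a$ iff $a=b$'' — which is why I would phrase the final step through the internal language, or equivalently through generalized elements in the pointwise Heyting algebras $\Hom{\tps{E}}{X}{H}$.
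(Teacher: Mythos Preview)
Your proof is correct and follows essentially the same route as the paper: both invoke Lemma~\ref{ImpLem}, use meet-preservation of $i$ and $\tau$ to pull the box through the conjunction, and then appeal to antisymmetry of the internal order on $H$ to compare with $\delta_H$. The only (harmless) difference is that you push the argument to an actual equality $i\tau\ell=i\delta_H$, whereas the paper is content with the inequality $\leq$ needed for the entailment.
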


\begin{proof}

In view of \lemref{ImpLem}, and since $\tau, i$ commute with meets, the left-hand side of the above sequent is interpreted as the map
\[
i \land(\delta_H \times \delta_H) \langle \langle \land_H,\pi_1 \rangle , \langle \land_H , \pi_2 \rangle \rangle,
\]
with $\land$ the meet on $\soc{E}$. The right-hand side is the internal equality on $H$:
\[
i \delta_H : H \times H \rightarrow \soc{E} \rightarrow H.
\]

It is clear from the properties of $\leq_\Omega$ as a partial ordering that
\[
\land (\delta_H \times \delta_H) \langle \langle \land_H,\pi_1 \rangle , \langle \land_H , \pi_2 \rangle \rangle \leq_{\Omega} \delta_H.
\]
Since $i$ preserves that ordering, we have
\[
i\land (\delta_H \times \delta_H) \langle \langle \land_H,\pi_1 \rangle , \langle \land_H , \pi_2 \rangle \rangle \leq_H i\delta_H.
\qedhere
\]
\end{proof}


The failure of plain function extensionality and its recovering via $\tau$ can be analyzed in a similar fashion. For non-modal function extensionality in the standard $\soc{E}$-valued setting essentially holds because $\forall_Y \mathbin{\circ} (\delta_X)^Y = \delta_{X^Y}$.
However, in our setting we don't in general have $\forall_Y \mathbin{\circ} (i\delta_X)^Y = i\delta_{X^Y}$, but rather:

\begin{lem}\label{FunExtLem}
For any topos $\tps{E}$, and any faithful frame in $H$, the following diagram commutes:
\[
\xymatrix{
\Omega^Y \ar[r]^-{i^Y} & H^Y \ar[d]^{\tau^Y} \ar[r]^-{\forall_Y} & H \ar[d]^\tau\\
X^Y \times X^Y \ar[u]^{(\delta_X)^Y} \ar[r]^-{(\delta_X)^Y} \ar@/_2pc/[rr]_{\delta_{X^Y}}& \Omega^Y \ar[r]^-{\forall_Y} & \Omega\\
}
\]
Hence in particular
\[
 i\delta_{X^Y} = i \tau \mathbin{\circ} \forall_Y \mathbin{\circ} (i\delta_X)^Y.
\]
\end{lem}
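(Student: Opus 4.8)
The plan is to break the square into three regions — the left triangle built from $(\delta_X)^Y$, $i^Y$ and $\tau^Y$; the right square relating $\forall_Y$ and $\tau$; and the outer ``cap'' triangle $\forall_Y\circ(\delta_X)^Y=\delta_{X^Y}$ — to verify each, and then to chase the boundary for the displayed identity. The left triangle reduces to $\tau^Y\circ i^Y=1_{\Omega^Y}$, and since $(-)^Y$ is a functor this follows from $\tau\circ i=1_\Omega$. The latter holds because $\tau$ is the classifying map of $\top_H:1\to H$ (\lemref{InFrame}): as $i$ is a frame map we have $i\circ\top_\Omega=\top_H$, and as $i$ is monic — this is where faithfulness of $H$ enters — this exhibits $\top_\Omega:1\rightarrowtail\Omega$ as the pullback of $\top_H$ along $i$; hence $\tau\circ i$, being the classifying map of that pullback, is $1_\Omega$.

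The right square, which I would write as $\tau\circ\forall_Y^H=\forall_Y^\Omega\circ\tau^Y:H^Y\to\Omega$ (denoting by $\Delta_Y^H\dashv\forall_Y^H$ and $\Delta_Y^\Omega\dashv\forall_Y^\Omega$ the two instances of the $\forall_Y$-adjunction, over $H$ and over $\Omega$), is the step I expect to demand the most care, since it rests entirely on bookkeeping with internal adjunctions; I would argue externally. For each object $A$ of $\tps{E}$, applying $\Hom{\tps{E}}{A}{-}$ turns the internal adjunctions $i\dashv\tau$ and $\Delta_Y\dashv\forall_Y$ (over both $H$ and $\Omega$) into ordinary adjunctions of posets, and the transpose $\Hom{\tps{E}}{A}{P^Y}\cong\Hom{\tps{E}}{A\times Y}{P}$ identifies $\Hom{\tps{E}}{A}{i^Y}$ with a left adjoint, so that $\Hom{\tps{E}}{A}{i^Y}\dashv\Hom{\tps{E}}{A}{\tau^Y}$. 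Composites of right adjoints being right adjoints, $\Hom{\tps{E}}{A}{\tau\circ\forall_Y^H}$ is the right adjoint of $\Hom{\tps{E}}{A}{\Delta_Y^H\circ i}$, while $\Hom{\tps{E}}{A}{\forall_Y^\Omega\circ\tau^Y}$ is the right adjoint of $\Hom{\tps{E}}{A}{i^Y\circ\Delta_Y^\Omega}$; but $\Delta_Y^H\circ i=i^Y\circ\Delta_Y^\Omega$ as arrows of $\tps{E}$, this being just naturality of $\Delta_Y$ (the transpose of a projection), so the two right adjoints agree for every $A$, and the Yoneda lemma gives $\tau\circ\forall_Y^H=\forall_Y^\Omega\circ\tau^Y$.

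The cap triangle $\forall_Y^\Omega\circ(\delta_X)^Y=\delta_{X^Y}$ is non-modal function extensionality in the $\Omega$-valued interpretation, which I would recall rather than reprove: both sides classify a subobject of $X^Y\times X^Y$; $\delta_{X^Y}$ classifies the diagonal, and $\forall_Y^\Omega\circ(\delta_X)^Y$ classifies $\forall_\pi$ along $\pi:X^Y\times X^Y\times Y\to X^Y\times X^Y$ of the subobject $\{(f,g,y)\mid f(y)=g(y)\}$, which the adjunction $(-)\times Y\dashv(-)^Y$ identifies with the diagonal, so uniqueness of classifying maps finishes it (cf.\ \cite{maclanemoerdijk92,lambekscott88}). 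Assembling the three regions and using $(i\delta_X)^Y=i^Y\circ(\delta_X)^Y$, one computes
\[
\tau\circ\forall_Y^H\circ(i\delta_X)^Y=\forall_Y^\Omega\circ\tau^Y\circ i^Y\circ(\delta_X)^Y=\forall_Y^\Omega\circ(\delta_X)^Y=\delta_{X^Y},
\]
so that postcomposition with $i$ yields $i\delta_{X^Y}=i\tau\circ\forall_Y\circ(i\delta_X)^Y$, as required.
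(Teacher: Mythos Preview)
Your proof is correct and follows the same three-region decomposition as the paper: the left region via $\tau i=1_\Omega$, the right square, and the bottom triangle $\forall_Y\circ(\delta_X)^Y=\delta_{X^Y}$. The one genuine methodological difference is your treatment of the right square $\tau\circ\forall_Y^H=\forall_Y^\Omega\circ\tau^Y$. The paper dispatches it by uniqueness of classifying maps: both composites $H^Y\to\Omega$ classify the subobject $\top_{H^Y}:1\to H^Y$ (using that $(-)^Y$ preserves the pullback defining $\tau$, and that $\forall_Y^H(f)=\top_H$ iff $f=\top_{H^Y}$). Your route---externalize, compose the adjunctions $i\dashv\tau$ and $\Delta_Y\dashv\forall_Y$, observe that $\Delta_Y^H\circ i=i^Y\circ\Delta_Y^\Omega$, and invoke uniqueness of right adjoints plus Yoneda---is equally valid and arguably more robust, since it does not rely on $\Omega$ being a subobject classifier but only on the adjoint structure; on the other hand the paper's argument is shorter and more in keeping with the classifying-map idiom used elsewhere (e.g.\ in \lemref{ImpLem}). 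For the bottom triangle the paper is slightly more explicit, exhibiting the relevant pullback diagram directly rather than citing function extensionality, but the content is the same.
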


\begin{proof}
The right-hand square of the diagram commutes by uniqueness of classifying maps, while for the left-hand square we have $\tau i = 1$. Similarly, the bottom triangle commutes, because
\[
\xymatrix{
X^Y \ar[d]_{\Delta_{X^Y}} \ar[r]&1 \ar@{=}[r] \ar[d]^{\top^Y}& 1 \ar[d]^\top \\
X^Y \times X^Y \ar[r]_-{(\delta_X)^Y} &\Omega^Y \ar[r]_{\forall_Y} & \Omega\\
}
\] 
is a pullback diagram. (Note that the left-hand square is a pullback, because the functor $(-)^Y$, as a right adjoint, preserves these.)
\end{proof}

\begin{cor}
Modal function extensionality
\[
f: X^Y , g : X ^Y \mid \Box(\forall y : Y . f(y) =_X g(y)) \vdash f =_{X^Y} g.
\]
is true in any interpretation $(\tps{E},H)$.
\end{cor}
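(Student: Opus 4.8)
The plan is to follow the pattern of Corollary~\ref{ModExtProp}, with Lemma~\ref{FunExtLem} playing the role that Lemma~\ref{ImpLem} played there. The first step is to compute the interpretation of the antecedent $\Box(\forall y:Y.\, f(y)=_X g(y))$ in the context $\Gamma = (f:X^Y,\ g:X^Y)$. Writing $X$, $Y$ for $\sem{X}$, $\sem{Y}$ and $\sem{\Gamma} = X^Y\times X^Y$, in the extended context $\Gamma, y:Y$ the term $f(y)=_X g(y)$ is interpreted, by the application and equality clauses of Definition~\ref{DefMod}, as
\[
X^Y\times X^Y\times Y \xrightarrow{\ \langle\varepsilon\circ\langle\pi_1,\pi_3\rangle,\ \varepsilon\circ\langle\pi_2,\pi_3\rangle\rangle\ } X\times X \xrightarrow{\ \delta_X\ } \soc{E} \xrightarrow{\ i\ } H.
\]
Taking the exponential transpose across $Y$ (the $\forall$-clause) and using functoriality of $(-)^Y$ identifies $\lambda_{Y}\sem{f(y)=_X g(y)}$ with $(i\delta_X)^Y\circ\theta$, where $\theta : X^Y\times X^Y \xrightarrow{\ \sim\ } (X\times X)^Y$ is the canonical isomorphism; this $\theta$ is exactly the identification under which Lemma~\ref{FunExtLem} writes $(\delta_X)^Y$ as a map out of $X^Y\times X^Y$. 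Applying $\forall_Y$ and then the $\Box$-clause ($\tau$ followed by $i$), the antecedent is interpreted by $i\tau\circ\forall_Y\circ(i\delta_X)^Y : X^Y\times X^Y\to H$.

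The second step is immediate: the consequent $f=_{X^Y} g$ is, by the equality clause of Definition~\ref{DefMod}, interpreted by $i\circ\delta_{X^Y} : X^Y\times X^Y\to\soc{E}\to H$, and Lemma~\ref{FunExtLem} states precisely that $i\delta_{X^Y} = i\tau\circ\forall_Y\circ(i\delta_X)^Y$. Hence antecedent and consequent receive the very same interpretation, and in particular the interpretation of the antecedent is $\le$ that of the consequent in the internal order of $H$ --- which is exactly what soundness of the displayed sequent demands. (Compared with Corollary~\ref{ModExtProp}, where only an inequality was extracted, here one even gets an equality of interpretations.)

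Since the entire content has been packed into Lemma~\ref{FunExtLem} --- whose proof rests on uniqueness of classifying maps, on $\tau i = 1_{\soc{E}}$, and on the fact that $(-)^Y$, being a right adjoint, preserves the relevant pullbacks --- the corollary itself presents no real obstacle. The only care needed is the first-step bookkeeping: matching $\lambda_Y\sem{f(y)=_X g(y)}$ with $(i\delta_X)^Y\circ\theta$ under the identification $(X\times X)^Y\cong X^Y\times X^Y$, and recalling that validity of $\varphi\vdash\psi$ means $\sem{\varphi}\le\sem{\psi}$, so that an equality of interpretations a fortiori yields the entailment. Conceptually, the point worth stressing is the same as in the propositional case: plain function extensionality fails because $\forall_Y\circ(i\delta_X)^Y$ need not equal $i\delta_{X^Y}$, but precomposing with, equivalently inserting, the comonad $i\tau$ exactly repairs the discrepancy.
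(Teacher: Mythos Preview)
Your proof is correct and takes essentially the same approach as the paper: both compute the antecedent as $i\tau\circ\forall_Y\circ(i\delta_X)^Y$ (modulo the canonical identification $X^Y\times X^Y\cong(X\times X)^Y$) and invoke Lemma~\ref{FunExtLem}. The paper is more explicit about that identification---it writes out $\lambda_Y(i\delta_X\langle ev\pi_{13},ev\pi_{23}\rangle)$ as $i^Y(\delta_X)^Y\langle ev\pi_{13},ev\pi_{23}\rangle^Y\eta$ and verifies that $\langle ev\pi_{13},ev\pi_{23}\rangle^Y\circ\eta$ is the identity via the triangle law for $(-)\times Y\dashv(-)^Y$---and then, somewhat redundantly, phrases the conclusion as an inequality (factoring a classified subobject through $\Delta_{X^Y}$), even though its own computation already yields the equality you state.
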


\begin{proof}
The left-hand side of the sequent is interpreted by the arrow 
\[
X^Y \times X^Y \xrightarrow{\lambda_Y (i\delta_X\langle ev \pi_{13}, ev \pi_{23} \rangle)}H^Y  \xrightarrow{\forall_Y} H \xrightarrow{\Box} H,
\]
where the projections come from $X^Y \times X^Y  \times Y$, and $ev :  X^Y  \times Y \rightarrow X$ is the canonical evaluation.
The right-hand side is simply 
\[
X^Y \times X^Y \xrightarrow{\delta_{X^Y}} \Omega_\tps{E} \xrightarrow{i} H.
\]
We need to show that the arrow 
\[
\langle i\tau \forall_Y\lambda_Y (i\delta_X\langle ev \pi_{13}, ev \pi_{23} \rangle) , i \delta_{X^Y} \rangle : X^Y \times X^Y \rightarrow H \times H
\]
factors through the partial ordering $(\leq) \rightarrowtail H \times H$.  Write the left-hand component as $i\varphi$.  It is enough to show that 
\[
\varphi \leq_{\Omega} \delta_{X^Y}  : X^Y \times X^Y,
\]
whence the claim follows as before, $i$ being order-preserving.

To show that the subobject $(Q,m)$ classified by the map $\tau \forall_Y\lambda_Y (i\delta_X\langle ev\pi_{13}, ev\pi_{23} \rangle)$ factors through $\Delta_{X^Y}$, as subobjects of $X^Y \times X^Y$, observe first that $\lambda_Y (i\delta_X\langle ev\pi_{13}, ev\pi_{23} \rangle)$ can be written as
\[
X^Y \times X^Y \xrightarrow{\eta} (X^Y \times X^Y \times Y)^Y \xrightarrow{\langle ev\pi_{13}, ev\pi_{23} \rangle^Y} (X \times X)^Y \xrightarrow{(\delta_X)^Y} \Omega^Y \xrightarrow{i^Y} H^Y,
\]
where $\eta$ is the unit component (at $X^Y \times X^Y$) of the product-exponential adjunction $(-) \times Y \dashv (-)^Y$. 
By the previous lemma 
\[
\tau \mathbin{\circ} \forall_Y \mathbin{\circ} i^Y \mathbin{\circ} (\delta_X)^Y = \delta_{X^Y}.
\] 
The subobject in question thus arises from pullbacks
\[
\xymatrix{
Q  \ar@{ >->}[d]_m \ar[rrr]&&& X^Y \ar[r] \ar[d]^{\Delta_{X^Y}} & 1 \ar[d]^{\top}\\%
X^Y \times X^Y \ar[rrr]^{\langle ev\pi_{13}, ev\pi_{23} \rangle^Y \mathbin{\circ} \eta} &&& X^Y \times X^Y \ar[r]^-{\delta_{X^Y}} & \Omega
}
\]
But $\langle ev\pi_{13}, ev\pi_{23} \rangle^Y \mathbin{\circ} \eta$ is the identity arrow. For it is the transpose (along the adjunction $(-) \times Y \dashv (-)^Y$) of 
\[
\langle ev\pi_{13}, ev\pi_{23} \rangle : X^Y \times X^Y \times Y \rightarrow X \times X.
\] 
The latter in turn is the canonical evaluation of $X^Y \times X^Y$ viewed as the exponential $(X \times X)^Y$, \ie the counit of the adjunction at $X \times X$, transposing which yields the identity.
As a result, 
\[
\tau \forall_Y\lambda_Y (i\delta_X\langle ev\pi_{13}, ev\pi_{23} \rangle) \leq_{\Omega} \delta_{X^Y},
\]
and therefore 
\[
i\tau \forall_Y\lambda_Y (i\delta_X\langle ev\pi_{13}, ev\pi_{23} \rangle) \leq_H i\delta_{X^Y}.
\qedhere
\]
\end{proof}

\begin{rem}
Before giving a counterexample to $i\delta_{X^Y} =   \forall_Y \mathbin{\circ} (i\delta_X)^Y$, let us remark that the equation does actually \emph{hold} in the topos \cat{Sets}. For consider $f\neq g \in X^Y$, \ie $f(y) \neq g(y)$, for some $y \in Y$. Then for any complete Heyting algebra $H$, the function $(i\delta_X)^Y(f,g) \in H^Y$ is defined as
\[
(i\delta_X)^Y(f,g)(y) = i\delta_X(f(y),g(y)) = \top, \quad \text{if}\ f(y) = g(y),
\]
and $\bot$ otherwise.
Thus taking the meet (cf.\ the definition in example \ref{Ex1}) yields 
\[
\bigwedge_{y \in Y} (i\delta_X)^Y(f,g)(y) = \bot,
\]
because $f(y) \neq g(y)$, for some $y \in Y$, by assumption. In turn the meet equals $\top$ just in case $f(y) = g(y)$, for all $y \in Y$, \ie if and only if $f = g$.
\end{rem}


\begin{prop}
It is not in general the case that for a topos $\tps{E}$ and a frame $H$ in~$\tps{E}$:
\[ 
i\delta_{X^Y} =   \forall_Y \mathbin{\circ} (i\delta_X)^Y.
\]
\end{prop}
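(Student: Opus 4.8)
The plan is to isolate the obstruction with \lemref{FunExtLem} and then exhibit it in a two-world Kripke model. By that lemma one always has $i\delta_{X^Y}=i\tau\circ\forall_Y\circ(i\delta_X)^Y$, so the proposed identity holds exactly when $\forall_Y\circ(i\delta_X)^Y$ is fixed by the modality $i\tau$, i.e.\ factors through the frame map $i\colon\soc{E}\to H$. Since $i$ is only required to preserve finite meets and all joins, I would look for a case where the internal universal quantifier $\forall_Y$ does more than take a stalkwise meet in $H$. The remark just above shows this never happens over $\cat{Sets}$, so the base topos must be nontrivial.

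Concretely, I would take $\cat{K}=\{a<b\}$, $\tps{E}=\srp{K}$, and for $H$ the faithful frame $\gsoc{\iota}{F}$ arising (as in the Introduction) from the inclusion $\iota\colon|\cat{K}|\hookrightarrow\cat{K}$ of $\tps{F}=\srpp{K}$ — the Kripke model over $\cat{K}$. A short computation of $\iota_\ast\Omega_\tps{F}$ gives $H(a)=\cat{4}$, $H(b)=\cat{2}$, with the restriction $H(a)\to H(b)$ a projection $\cat{2}\times\cat{2}\to\cat{2}$; whereas $\soc{E}(a)=\cat{3}$, $\soc{E}(b)=\cat{2}$, and $i$ is at world $a$ a subframe embedding $\cat{3}\hookrightarrow\cat{4}$ of the kind in example~\ref{FrameEx}. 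Writing $\cat{4}=\cat{2}\times\cat{2}$ with the restriction of $H$ the projection onto the second factor, the image of $i$ at world $a$ is $\{(0,0),(0,1),(1,1)\}$; thus $(1,0)$ is the unique element of $H(a)$ not fixed by $i\tau$.

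Now take $X=1+1$ and $Y=\y b$, the representable at $b$, which is the subterminal object equal to $\emptyset$ at $a$ and to a singleton at $b$. Then $X^{\y b}$ is the constant presheaf on $X(b)$, and since $\y b$ is empty at $a$, the comparison $\Delta_{\y b}$ is at world $a$ just the restriction $H(a)\to H(b)$; hence $\forall_{\y b}$ at world $a$ is the right adjoint $\cat{2}\to\cat{4}$ of that projection, which sends $\bot$ to $(1,0)$. For the two distinct elements $l,r$ of $X(b)=X^{\y b}(a)$, the map $(i\delta_X)^{\y b}$ carries the pair $(l,r)$ at world $a$ to $\bot$, so $\bigl(\forall_{\y b}\circ(i\delta_X)^{\y b}\bigr)(l,r)$ is $(1,0)$ there. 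But $\delta_{X^{\y b}}(l,r)$ is the empty sieve at $a$ — the only arrows into $a$ are $\mathrm{id}_a$ and $b\to a$, along both of which $l,r$ stay apart — so $i\delta_{X^{\y b}}(l,r)$ is $i(\bot)=(0,0)$ at $a$. Hence $i\delta_{X^Y}\neq\forall_Y\circ(i\delta_X)^Y$, compatibly with \lemref{FunExtLem} since $i\tau(1,0)=(0,0)$.

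The step I expect to be the real content is the evaluation of $\forall_{\y b}$: because $\y b$ is a nontrivial subterminal, this quantifier is not, at world $a$, an internal meet in $H(a)$ but the right adjoint of a restriction map of $H$ — a bona fide pushforward along an open inclusion — and it is precisely this that sends a value out of the image of $i$, breaking the unmodalized identity while \lemref{FunExtLem} keeps the modalized one. That $H=\iota_\ast\Omega_\tps{F}$ is a faithful frame, and that the adjoints $\forall_{\y b},\exists_{\y b}$ are computed pointwise as used, are routine from Section~1.
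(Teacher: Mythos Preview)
Your argument is correct and gives a cleaner counterexample than the paper's. Both work in the presheaf topos on the two-element chain $\cat{K}$ with $H=\iota_\ast\Omega_{|\cat{K}|}$, but the paper takes $X=Y=G$ for a specific nonconstant presheaf $G$ (one point over one world, two over the other) and computes $\forall_G\circ(i\delta_G)^G$ by unwinding the general presheaf formula for $\forall$; the calculation occupies a page. Your choice $X=1+1$, $Y=\y b$ is sharper: because $\y b$ is a subterminal that is empty at $a$, the map $\Delta_{\y b}$ at $a$ collapses to the transition map $H(a)\to H(b)$, so $\forall_{\y b}$ there is just its poset right adjoint, and the single non-$i\tau$-fixed element $(1,0)\in H(a)$ appears immediately. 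What the paper's choice buys is that its $Y$ is globally inhabited (indeed $Y=X$), which ties in with its closing remark that constant-domain models \emph{do} validate the unmodalized identity; your choice trades that for conceptual transparency, exhibiting the obstruction as a pushforward along an open inclusion rather than a stalkwise meet. One cosmetic slip: in your parenthetical you speak of arrows ``$b\to a$'', but in $\cat{K}=\{a<b\}$ the relevant arrows are those \emph{out of} $a$, namely $1_a$ and $a\to b$ (you are tacitly working in $\cat{K}^{op}$); the conclusion $(\delta_{X^{\y b}})_a(l,r)=\emptyset$ is of course unaffected.
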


\begin{proof}

To find a counterexample we consider a specific presheaf topos $\prs{C}$ described below.\foot{The counterexample, in particular the choice of \cat{C} and the functor $G: \cat{C} \rightarrow \cat{Sets}$ below, follows a slightly different, though equivalent, proof first given in \cite{joergsthesis}.}
Let's first recall some general facts. Write $\Omega_{|\cat{C}|}$ for the subobject classifier in $\cat{Sets}^{|\cat{C}|} $ and choose $H = f_\ast\Omega_{|\cat{C}|}$ (henceforth $\Omeag_\ast$), where $f$ is the geometric morphism $f : \cat{Sets}^{|\cat{C}|} \rightarrow \prs{C}$ induced by the inclusion $|\cat{C}| \hookrightarrow \cat{C}$ via right Kan extensions. Recall moreover from the beginning that the subobject classifier $\Omega$ of $\prs{C}$ determines for each $C$ the set of all sieves on $C$. By contrast, $\Omega_\ast(C)$ is the set of arbitrary sets of arrows with codomain $C$ (cf. also the example from the next section).

Recall that in any category of the form $\prs{C}$ the evaluation maps $\varepsilon : B^A \times A \rightarrow B$ have components
\[
\varepsilon_C(\eta , a) = \eta_C(1_C,a),
\]
where $\eta \in B^A(C) = \Hom{}{\y C \times A}{B}$ and $a \in A(C)$. The exponential transpose $\overline{\alpha} : Z \rightarrow B^A$ of a map $\alpha : Z \times A \rightarrow B$ has components
\begin{equation}\label{DefExTP}
\overline{\alpha}_C(z) = \alpha \mathbin{\circ} (\zeta \times 1_A),
\end{equation}
where $\zeta : \y C \rightarrow Z$ corresponds under the Yoneda lemma to the element $z \in Z(C)$, \ie is defined as $\zeta(f) = Z(f)(z)$, for any $f \in \y C(D)$.

For any object $A$ in \cat{C}, the functor $(-)^A$ acts on arrows $f : C \rightarrow D$ as 
\[
f^A = \overline{f \mathbin{\circ} \varepsilon},
\]
for evaluation $\varepsilon : C^A \times A \rightarrow C$. In particular, 
\[
(i\delta_B)^A  = \overline{i\delta_B \mathbin{\circ} \varepsilon},
\]
for $\varepsilon : (B \times B)^A \times A \rightarrow B \times B$ evaluation at $A$.
Thus, for any pair 
\[
\langle \eta,\mu \rangle \in (B \times B)^A(C) = \Hom{}{\y C \times A}{B \times B},
\]
we have 
\[
(\overline{i\delta_B \mathbin{\circ} \varepsilon})_C(\eta , \mu) = i \delta_B \varepsilon ( \langle \eta , \mu \rangle^\ast \times 1_A) = i \delta_B\langle \eta , \mu \rangle.
\]
Here we use that $\langle \eta , \mu \rangle^\ast : \y C \rightarrow (B \times B)^A$ corresponds under Yoneda to the element $\langle \eta , \mu \rangle \in (B \times B)^A(C) = \Hom{}{\y C \times A}{B \times B}$ and that $\langle \eta , \mu \rangle^\ast$ is equal to the exponential transpose of $\langle \eta , \mu \rangle$.
Accordingly,
\begin{align*}
\forall_C(i\delta_B)^A_C(\eta,\mu)  & = \forall_C(\overline{i\delta_B \mathbin{\circ} \varepsilon})_C(\eta , \mu)\\
& = \forall_C(i \delta_B\langle \eta , \mu \rangle) \\
& = \bigcup \{ s \in \Omega_\ast(C) \mid \Omega_\ast(g)(s) \leq i_D (\delta_B)_D (\eta_D(g,b) , \mu_D(g,b)), \mtext{for all}  \\ 
& \phantom{= \bigcup \{ s \in \Omega_\ast(C) \mid }\ (g: D \rightarrow C , b \in A(D))\},
\end{align*}

On the other hand, the classifying map of the diagonal on a functor $B : \cat{C}^{op} \rightarrow \cat{Sets}$ is computed as
\[
(\delta_B)_C(x,y) = \{ f : D \rightarrow C \mid B(f)(x) = B(f)(y)\},
\] 
for all pairs $(x,y) \in B(C)\times B(C)$. It is the maximal sieve $\top_C$ on $C$ just in case $x = y$.

Now let $\cat{C}$ be the finite category 
\[
C \xrightarrow{g} D,
\]
and define a functor $G : \cat{C}^{op} \rightarrow \cat{Sets}$ as follows:\foot{Although $g: C \rightarrow D$ may be seen as the two-element poset with resulting presheaf topos $\cat{Sets}^\rightarrow$, we will not need that description. The objects and arrows in \cat{C} merely play the role of indices, so it seems better to use the more neutral notation $C,D,g$.}
\[
G(D) = \{u\},\ G(C) = \{v,w\},\ G(g)(u) = v.
\]
Furthermore, choose $\eta , \mu \in G^G(D)$ such that $\eta \neq \mu$. Observe that, while necessarily 
\[
\eta_D = \mu_D : \y D (D) \times G(D) \rightarrow G(D)
\]
with assignment 
\[
(1_D , u) \mapsto u,
\]
we can chose $\eta,\mu$  in such a way that $\eta_C(g,x) \neq \mu_C(g,x)$, for some pair $(g,x) \in \y D (C) \times G(C)$. Specifically, since the first component $g$ is fixed, the choice is only about $x \in G(C)$ which in turn must concern $w \in G(C)$. For naturality requires that 
\[
G(g)\eta_D(1_D,u) = \eta_C(\y D(g) \times G(g))_C(1_D,u) = \eta_C(g,v),
\] 
so that since $G(g)\eta_D(1_D,u) = G(g)(u) = v$, we must have $\eta_C(g,v) = v$; similarly $\mu_C(g,v) = v$. However, no constraint is put on the values $\eta_C(g,w)$ and $\mu_C(g,w)$, respectively.

Then:
\begin{equation}\label{CExFunExt1}
(\delta_{G^G})_D(\eta, \mu) = \{x : X \rightarrow D \mid G^G(x)(\eta) = G^G(x)(\mu)\} = \emptyset .
\end{equation}
For if $x = g$, observe
\[
G^G(g)(\eta) = \eta \mathbin{\circ} (\y g \times 1_G) \neq \mu \mathbin{\circ} (\y g \times 1_G) = G^G(g)(\mu),
\]
because 
\[
\eta_C (\y g \times 1_G)_C(1_C,w) = \eta_C(g,w)   \neq \mu_C(g,w) = \mu_C(\y g \times 1_G)_C(1_C,w), 
\]
where the inequality holds by construction. But also, if $x=1_D$, then $G^G (x)(\eta) = \eta \neq \mu = G^G(x)(\mu)$, where the inequality holds by assumption again.

On the other hand, 
\begin{equation}\label{ExFunExt1}
\forall_D(i\delta_G)^G_D(\eta,\mu)  = \bigcup \{ s \in \Omega_\ast(D) \mid \Omega_\ast(x)(s) \leq i_X (\delta_G)_X (\eta_X(x,b) , \mu_X(x,b))\} = \{1_D\}.
\end{equation}
for all pairs $(x : X \rightarrow D, b \in G(X))$ from \cat{C}.
It is clear that $s = \{1_D\}$ satisfies the condition on the underlying set of the union, since for $x = 1_D$,
\begin{align*}
\Omega_\ast(1_D)(\{1_D\}) & = \{1_D\} \\
& \subseteq \top_D = i_D(\delta_G)_D (\eta_D(1_D , u) , \mu_D(1_D , u)).
\end{align*}
On the other hand, for $x = g$, it is trivially always the case that
\[
\Omega_\ast(g)(\{1_D\}) = \emptyset \subseteq (\delta_G)_C( \eta_C(g,b) , \mu_C(g,b)),
\] 
for all $b \in G(C)$.

Furthermore, note that if $g \in s$, for some $s \in \Omega_\ast(D)$, then 
\[
\Omega_\ast(g)(s) = \top_C = \{1_C\}.
\]
So if $g \in s$, for some $s$ in the underlying set of the union $\eqref{ExFunExt1}$, we had to have
\[
\top_C = \Omega_\ast(g)(s) \leq i_C (\delta_G)_C (\eta_C(g,b) , \mu_C(g,b)),
\]
for all $b \in G(C)$. However, since by assumption $\eta_C(g,w) \neq \mu_C(g,w)$, 
\[
(\delta_G)_C( \eta_C(g,w) , \mu_C(g,w)) = \emptyset,
\]
and so
\[
\Omega_\ast(g)(s) \nleq i_C (\delta_G)_C (\eta_C(g,w) , \mu_C(g,w)).
\]
Thus $g \notin s$, for all $s \in \Omega_\ast(D)$ in the underlying set of $\forall_D(i\delta_G)^G_D(\eta,\mu)$. Therefore
\[
\forall_D(i\delta_G)^G_D(\eta,\mu) = \{1_D\},
\]
as claimed, and in contrast to \eqref{CExFunExt1}:
\[
i_D(\delta_{G^G})_D(\eta, \mu) = \emptyset.
\]
(Of course, $\tau(\{1_D\}) = \emptyset$, as lemma \ref{FunExtLem} predicts.)
\end{proof}


\begin{rem}
There is an alternative, more combinatorial way of presenting the previous proof. The idea is to formulate the proof in terms of loop graphs rather than presheaves. For presheaves on the category $\{C \xrightarrow{g} D\}$ can equivalently be regarded as labelled graphs that consist only of loops and points, for instance:
\[
\xymatrix{
&\\
\bullet_a \ar@(ur,ul)@{-}[]_c & \bullet_b \\
}
\]
Here, $G(D)$ is the set of edges and $G(C)$ the set of vertices, while $G(g)$ assigns to an edge a point, its ``source''. Thus every loop has a unique source but each point may admit several edges on it. $\Omega$ is the following graph which is easily seen to classify subgraphs:
\[
\xymatrix{
&\\
\bullet_1 \ar@(ur,ul)@{-}[]_{11} \ar@(dr,dl)@{-}[]^{10} & \bullet_0  \ar@(ur,ul)@{-}[]_{00} \\
}
\]
The labelling expresses the imposed algebraic structure of $\Omega$ with $0 < 1$ and $xy \leq uv \mathiff x \leq u\ \&\ y \leq v$. Intuitively, in presheaf terms, $1$ stands for the maximal sieve on $C$ and $0$ for the empty sieve; similarly pairs $xy$ encode sieves on $D$, where $x = 1$ if and only if $g$ is the sieve and $y=1$ if and only if $1_D$ is in it. Then the source of an edge $xy$ is just $x$. For instance, the sieve $\{g\}$ on $D$ is encoded by $10$. Then $\Omega(g)(\{g\}) = \{1_C\}$ which is encoded by $1$. Note also that the set of edges is the three-element Heyting algebra from example \ref{FrameEx}.

By contrast $\Omega_\ast$ is the graph
\[
\xymatrix{
&\\
\bullet_1 \ar@(ur,ul)@{-}[]_{11} \ar@(dr,dl)@{-}[]^{10} & \bullet_0  \ar@(ur,ul)@{-}[]_{00} \ar@(dr,dl)@{-}[]^{01}\\
}
\]
Here the additional edge $01$ corresponds to the fact that $\{1_D\} \in \Omega_\ast(D)$. Thus the set of edges is the four-element Boolean algebra with the source map $2^2 \rightarrow 2$ induced by the inclusion $1 \hookrightarrow 2$.

The functor $G$ from before becomes the graph
\[
\xymatrix{
&\\
\bullet_v \ar@(ur,ul)@{-}[]_u & \bullet_w
}
\]
while $G^G$ is 
\[
\xymatrix{
\bullet_{vv} \ar@(ur,ul)@{}[]_{\theta_1} &\bullet_{vw} \ar@(ur,ul)@{}[]_{\theta_0} & \bullet_{wv} & \bullet_{ww}
}
\]
The graph $\Omega^G$ then looks like this:
\[
\xymatrix{ 
&&&\\
\bullet_{11} \ar@(ur,ul)@{-}[]_{111} \ar@(dr,dl)@{-}[]^{110} & 
\bullet_{10} \ar@(ur,ul)@{-}[]_{101} \ar@(dr,dl)@{-}[]^{100} & 
\bullet_{01} 
 \ar@(dr,dl)@{-}[]^{010} & 
\bullet_{00} \ar@(dr,dl)@{-}[]^{000} 
}
\]
again with the pointwise ordering.\foot{The labelling can of course systematically be translated into one such that \eg edges are labelled by natural transformations $\eta : \y D \times G  \rightarrow \Omega$ as before. For any such $\eta$ is uniquely determined by the values $\eta_D(1_D,u)$ and $\eta_C(g,w)$. Vertices are just $2^2$, as there are exactly four natural transformations $\y C \times G \rightarrow \Omega$, each one defined by the pair $xy$ of values of the component at $a$ ($\Omega(C) = 2$). Their intuitive meaning in terms of sieves on $D$ is as before. In turn, the notation $xyz$ is chosen in such a way that the source is $xy$. Thus, $xyz$ is to be read so as to mean $\eta_D(1_D,u) = xz$ and $\eta_C(g,w) = y$. For by definition the source of an edge $\eta$ in $\Omega^G$ is $\Omega^G(g)(\eta)= \eta(\y g \times 1_G)$. Its component at $D$ is empty while for $C$, and $x = v$ 
\[
\eta_C((\y g)_C \times 1_{GC})(1_C,v) = \eta_C(g,v) = \eta_C(\y D(g) \times G(g))(1_D,u) = g^\ast \eta_D(1_D,u),
\]
where the last identity holds by naturality of $\eta$. Thus the source is the pair $(g^\ast\eta_D(1_D,u) , \eta_C(g,w))$. In turn, $g^\ast\eta_D(1_D,u)$ is the first digit of $\eta_D(1_D,u)$.  Moreover, in the expression $xyz$, $y = 1 \mathiff 1_C \in \eta_C(g,w)$. So the source of $xyz$ is $xy$.
}

The graph $\Omega_\ast^G$ is:
\[
\xymatrix{ 
&&&\\
\bullet_{11} \ar@(ur,ul)@{-}[]_{111} \ar@(dr,dl)@{-}[]^{110} & 
\bullet_{10} \ar@(ur,ul)@{-}[]_{101} \ar@(dr,dl)@{-}[]^{100} & 
\bullet_{01} \ar@(ur,ul)@{-}[]_{011} \ar@(dr,dl)@{-}[]^{010} & 
\bullet_{00} \ar@(dr,dl)@{-}[]^{000} \ar@(ur,ul)@{-}[]_{001}
}
\]
The vertices are the four element Boolean algebra $2^2$ with the pointwise ordering, and the same for the edges $2^3$. The source map $xyz \mapsto xy$ is the map $2^3 \rightarrow 2^2$ induced by the inclusion $2 \hookrightarrow 3$ that projects out the first two arguments of an element of $2^3$. 

As it turns out, for $\delta^G : (G \times G)^G \rightarrow \Omega^G$:
\[
(\delta^G)_D (\theta_0,\theta_1) = 101.
\]
On the other hand,  $\Delta_C(x) = xx$ and $\Delta_D(xy) = xxy$, and so 
\[
\forall_D(xyz) = \bigvee \{st \in \Omega_\ast(D) \mid sst \leq xyz\},
\]
and similarly for $\Omega$. Thus $\forall_D(101) = \bigvee\{00 , 01\} = 01$, for $\forall_D : \Omega_\ast^G(D) \rightarrow \Omega_\ast(D)$, while  $\forall_D(101) = \bigvee\{00\} = 00$, for $\forall_D : \Omega^G(D) \rightarrow \Omega(D)$.

Note finally that function extensionality \emph{is} valid in constant domain models. (See next section for the connection between topos semantics and Kripke models.)
For instance, consider a loop graph where $G(D) \cong 2 \cong G(C)$. An element in $\Omega^G(D)$, as a natural transformation $\eta_D : \y D \times G \rightarrow \Omega$, is completely determined by the two values $\eta_D(1,a), \eta_D(1,b)$, for $\{a,b\} = G(D)$. Thus, edges in $\Omega^G$ can be represented by sequences $xyzw$, where $xy$ and $zw$ are the respective edges $\eta_D(1,a)$ and  $\eta_D(1,b)$ in $\Omega(D)$, using the binary notation from before. The source of an edge $xyzw$ is $xz$. On the other hand, the map $\Delta_D : \Omega(D) \rightarrow \Omega^G(D)$ can be computed as $\Delta_D(st) = stst$. Now note that there can be no edge in $\Omega^G$ of the form $xy01$ or $01zw$, because $01$ is not an edge in $\Omega$ (moreover that's the only difference between $\Omega^G$ and $\Omega_\ast^G$). As a result, there is no edge in $\Omega^G$ such that applying $\forall$ to it is different from applying $\forall$ to that same edge in $\Omega_\ast^G$. For the only reason this might happen is because $01$ is in the underlying set of the join
\[
\forall_D(xyzw) = \bigvee \{st \in \Omega_\ast(D) \mid stst \leq xyzw\}.
\]
However, if $0101 \leq xyzw$, for any edge $xyzw$ in $\Omega^G$, then $xyzw = 1111$. But certainly $\forall$ has the same value on $1111$ for both $\Omega^G$ and $\Omega_\ast^G$. Although the argument is for models with domain of cardinality 2, it easily generalizes to any $n$.

\end{rem}

\normalsize


\section{Algebraic semantics from geometric morphisms}\label{RecGeomCase}

The canonical example of a model in the sense of def.\ \ref{DefMod} is the case where $H = \gsoc{f}{F}$, for a surjective geometric morphism $f : \tps{F} \rightarrow \tps{E}$ \cite{ghilardi07,makkaireyes95,reyes91,reyeszolfaghari91}. We will continue to describe it in some detail to show that the known semantics for it really coincides with the one described in section \ref{HOMLModels}, the crucial thing to check being the equality relation. To ease notation, we write $A^\ast$ for $f^\ast A$, $A_\ast$ for $f_\ast A$ and $\Omega_\ast$ for $\gsoc{f}{F}$, if $f$ is understood.

\begin{prop}\label{PropRecGeo}
For any geometric morphism $f : \tps{F} \rightarrow \tps{E}$, the object $\Omega_\ast$ is a complete Heyting algebra in $\tps{E}$.
\end{prop}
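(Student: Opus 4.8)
The plan is to equip $\Omega_\ast=f_\ast\soc{F}$ with its internal complete Heyting structure by transporting, through the Yoneda lemma, the structure carried by the subobject lattices of $\tps{F}$ --- exactly as the complete Heyting structure on $\soc{E}$ was recovered above from the presheaf $\sub{\tps{E}}{-}$. The input is the chain of isomorphisms already recorded in the introduction,
\[
\Hom{\tps{E}}{A}{\Omega_\ast}=\Hom{\tps{E}}{A}{f_\ast\soc{F}}\cong\Hom{\tps{F}}{f^\ast A}{\soc{F}}\cong\sub{\tps{F}}{f^\ast A},
\]
natural in $A\in\tps{E}$; thus $\Hom{\tps{E}}{-}{\Omega_\ast}$ is, as a functor $\tps{E}^{op}\to\cat{Sets}$, naturally isomorphic to $A\mapsto\sub{\tps{F}}{f^\ast A}$, and it suffices to check that the latter carries a ``pointwise complete Heyting algebra structure, natural in $A$'' of the kind Yoneda converts into internal structure on the representing object $\Omega_\ast$.

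First I would handle the finite operations. Each $\sub{\tps{F}}{f^\ast A}$ is a Heyting algebra, and for $g\colon A\to B$ in $\tps{E}$ the transition map is pullback along $f^\ast g$, which --- like every pullback functor between subobject lattices in a topos --- is a complete Heyting homomorphism; moreover $f^\ast$ is left exact, so it carries $A\times A$ with its projections to $f^\ast A\times f^\ast A$ with its projections. Feeding this into Yoneda produces maps $\top,\bot\colon1\to\Omega_\ast$ and $\wedge,\vee,\Rightarrow\colon\Omega_\ast\times\Omega_\ast\to\Omega_\ast$ all of whose defining diagrams commute, since after applying $\Hom{\tps{E}}{A}{-}$ they become the Heyting-algebra identities in $\sub{\tps{F}}{f^\ast A}$. (There is a shortcut for this part: $f_\ast$, being a right adjoint, is left exact, and hence carries the internal Heyting algebra $\soc{F}$ of $\tps{F}$ to an internal Heyting algebra $f_\ast\soc{F}$ in $\tps{E}$; I would record this as well.)

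For completeness, fix an object $I\in\tps{E}$ and show that $\Delta_I\colon\Omega_\ast\to\Omega_\ast^I$ has internal left and right adjoints. Since $\Delta_I=\Omega_\ast^{u_I}$ for $u_I\colon I\to1$, the argument used above for $\soc{E}$ applies with almost no change: under the isomorphisms
\[
\Hom{\tps{E}}{A}{\Omega_\ast^I}\cong\Hom{\tps{E}}{A\times I}{\Omega_\ast}\cong\sub{\tps{F}}{f^\ast(A\times I)}\cong\sub{\tps{F}}{f^\ast A\times f^\ast I}
\]
(the last step using left-exactness of $f^\ast$), $\Delta_I$ corresponds to pullback $\pi^\ast\colon\sub{\tps{F}}{f^\ast A}\to\sub{\tps{F}}{f^\ast A\times f^\ast I}$ along the projection $\pi\colon f^\ast A\times f^\ast I\to f^\ast A$. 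In the topos $\tps{F}$ this $\pi^\ast$ has a left adjoint $\exists_\pi$ and a right adjoint $\forall_\pi$, and the Beck--Chevalley condition (valid for the pullback squares obtained by pulling $\pi$ back along the maps $f^\ast g$) makes both adjoints natural in $A$; so by Yoneda they internalize to arrows $\Omega_\ast^I\to\Omega_\ast$ that are internal left and, respectively, right adjoint to $\Delta_I$ --- \ie the $I$-indexed joins $\bigvee_I$ and meets $\bigwedge_I$. That each $\sub{\tps{F}}{f^\ast A}$ is, in addition, complete for ordinary set-indexed families then certifies, just as for $\soc{E}$, that these internal adjoints are genuine.

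The real content here is the Yoneda internalization, which was already carried out for $\soc{E}$. The step I expect to demand the most care --- the main obstacle --- is checking the two compatibilities that let the external data glue into internal data: that $\Delta_I$ does correspond precisely to pullback along the projection (which forces one to exploit that $f^\ast$ preserves $A\times I$ together with its projection onto $A$), and that the Beck--Chevalley squares hold so that the external adjoints $\exists_\pi,\forall_\pi$ assemble into genuinely \emph{internal} adjoints. The remaining verifications are routine bookkeeping.
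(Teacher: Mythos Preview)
Your proposal is correct and follows essentially the same route as the paper: use $f_\ast$ left-exact (equivalently, Yoneda applied to the natural Heyting structure on $\sub{\tps{F}}{f^\ast-}$) for the finitary part, and for completeness identify $\Delta_I$ under the chain $\Hom{}{A}{\Omega_\ast^I}\cong\Hom{}{A^\ast}{\soc{F}^{I^\ast}}\cong\sub{\tps{F}}{A^\ast\times I^\ast}$ with pullback along the projection, so that the internal $\bigwedge_I,\bigvee_I$ are induced by the external adjoints $\forall_{\pi_1},\exists_{\pi_1}$ in $\tps{F}$. The only cosmetic difference is that the paper phrases the Yoneda step via composition with the internal maps $\bigwedge_{I^\ast},\Delta_{I^\ast}\colon\soc{F}\leftrightarrows\soc{F}^{I^\ast}$ in $\tps{F}$ before unwinding to $\pi_1^\ast\dashv\forall_{\pi_1}$, and leaves the Beck--Chevalley check you flag implicit in the phrase ``natural in $E$''.
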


\begin{proof}

The object $\Omega_\ast$ is a Heyting algebra under the image of $f_\ast$, since $f_\ast$ preserves products. The same algebraic structure is equivalently determined through Yoneda by the external Heyting operations on each $\sub{\tps{F}}{A^\ast}$ under the natural isomorphisms
\[
\sub{\tps{F}}{A^\ast} \cong \Hom{\tps{F}}{A^\ast }{\soc{F}} \cong \Hom{\tps{E}}{ A}{\Omega_\ast}.
\]

Completeness means that $\Omega_\ast$ has $I$-indexed joins and meets, for any object $I$ in $\tps{E}$. One way to see this is to first note that there are isomorphisms (natural in $E$)
\[
\Hom{}{E}{(\Omega_\ast)^I} \cong \Hom{}{E \times I}{\Omega_\ast} \cong \Hom{}{E^\ast  \times I^\ast}{\Omega_{\tps{F}}} \cong \Hom{}{E^\ast}{\Omega_{\tps{F}}^{I^\ast }},
\]
where we use that $f^\ast$ preserves finite limits.
Composition with 
\[
{\bigmeet_{I^\ast }} : \Omega_{\tps{F}}^{I^\ast } \longrightarrow \Omega_{\tps{F}}
\]
hence yields a function 
\[
\Hom{}{E}{(\Omega_\ast)^I} \xrightarrow{\cong} \Hom{}{E^\ast }{\Omega_{\tps{F}}^{I^\ast }} \xrightarrow{\bigmeet_{}{}_{I^\ast } \mathbin{\circ} (-)}  \Hom{}{E^\ast }{\Omega_{\tps{F}}} \xrightarrow{\cong} \Hom{}{E}{\Omega_\ast},
\]
all natural in $E$.
Thus, by the Yoneda lemma, there is a unique map 
\[
\bigmeet_I : (\Omega_\ast)^I \longrightarrow \Omega_\ast
\]
such that the function 
\[
\Hom{}{E}{(\Omega_\ast)^I} \longrightarrow \Hom{}{E}{\Omega_\ast}
\]
from above is induced by composition with $\bigmeet_I$. 

$\forall_I$ is indeed right adjoint to $\Delta_I : \Omega_\ast \rightarrow \Omega_\ast^I$. For $\Delta_{I^\ast}: \soc{F} \rightarrow \soc{E}^{I^\ast}$ induces, by composition, a function
\[
\Hom{}{E}{\Omega_\ast} \cong  \Hom{}{E^\ast }{\Omega_{\tps{F}}} \ \xrightarrow{\Delta_{I^\ast} \mathbin{\circ} (-)} \Hom{}{E^\ast }{\Omega_{\tps{F}}^{I^\ast }} 
\]
with
\[
 \Delta_{I^\ast } \mathbin{\circ} (-) \dashv \bigmeet_{}{}_{I^\ast } \mathbin{\circ} (-).
\]
This adjunction in turn is the one that corresponds by Yoneda under the isomorphism \eqref{DefSOC} to the adjunction $\pi_1^\ast \dashv \forall_{\pi_1}$:
\[
\forall_{\pi_1} : \sub{\tps{F}}{E^\ast \times I^\ast} \leftrightarrows \sub{\tps{E}}{E^\ast} : \pi_1^\ast ,
\]
where $\pi_1^\ast$ is pulling back along $\pi_1 : E^\ast \times I^\ast \rightarrow E^\ast$. $I$-indexed joins are treated similarly.
\end{proof}

The modal operator is given by the uniquely determined structure 
\begin{equation}\label{IntAd}
\tau : \Omega_\ast \leftrightarrows \soc{E} : i,
\end{equation}
where $\tau$ is the classifying map of 
\[
\top = f_\ast (\top) : 1 \rightarrow \Omega_\ast.
\]

\begin{lem}\label{ExtIntAdj}
The internal adjunction \eqref{IntAd} is induced via the Yoneda lemma by an external adjunction 
\begin{equation}\label{ExtModAd}
\Delta_A :\sub{\tps{E}}{A} \leftrightarrows \sub{\tps{F}}{f^\ast A} : \Gamma_A
\end{equation}
which is natural in $A$.\foot{Cf. \eg \cite{reyeszolfaghari91}.}
\end{lem}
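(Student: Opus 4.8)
The plan is to run everything through the Yoneda lemma, translating the internal maps $i,\tau$ and the external ones $\Delta_A,\Gamma_A$ into natural transformations between the representable functors $\sub{\tps{E}}{-}$ and $\sub{\tps{F}}{f^\ast(-)}$. Recall the isomorphisms, natural in $A\in\tps{E}$,
\[
\Hom{\tps{E}}{A}{\Omega_\ast}=\Hom{\tps{E}}{A}{f_\ast\soc{F}}\cong\Hom{\tps{F}}{f^\ast A}{\soc{F}}\cong\sub{\tps{F}}{f^\ast A},
\]
the first by $f^\ast\dashv f_\ast$ and the second by \eqref{DefSOC} for $\tps{F}$, together with the standard $\Hom{\tps{E}}{A}{\soc{E}}\cong\sub{\tps{E}}{A}$. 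Precomposition with $i\colon\soc{E}\to\Omega_\ast$ and with $\tau\colon\Omega_\ast\to\soc{E}$ then gives natural transformations between these functors; conversely, by Yoneda every natural transformation between them arises in this way from a unique internal map. So it suffices to (i) identify the transformation induced by $i\circ(-)$ with the family $\Delta_A$ of \eqref{ExtModAd}, i.e.\ restriction of $f^\ast$ to subobjects; (ii) show $\Gamma_A=\tau\circ(-)$ is its right adjoint; and (iii) record naturality. That \eqref{IntAd} is recovered from \eqref{ExtModAd} via Yoneda is then automatic from the correspondence just described.

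For (i) I would argue by uniqueness. Restriction of $f^\ast$ to subobjects is, for each $A$, an injective frame homomorphism $\sub{\tps{E}}{A}\to\sub{\tps{F}}{f^\ast A}$; this family is natural in $A$ (Beck--Chevalley: $f^\ast$ preserves pullbacks) and componentwise preserves the frame structure (which is itself stable under pullback), hence it constitutes a morphism of \emph{internal} frames $\soc{E}\to\Omega_\ast$ under Yoneda. But by \lemref{InFrame} the frame $\soc{E}$ is initial in $\tps{E}$, so there is exactly one internal frame map $\soc{E}\to\Omega_\ast$, namely $i$. Therefore the Yoneda transpose of $\Delta_A$ is $i$, i.e.\ $\Delta_A=i\circ(-)$. (One may instead verify this by a direct chase, transposing $i$ across $f^\ast\dashv f_\ast$ to the classifying map $f^\ast\soc{E}\to\soc{F}$ of the mono $f^\ast\top$---left-exactness of $f^\ast$ being what makes this well defined---and checking that $i\circ\chi_M$ transposes to the classifying map of $f^\ast M$.)

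For (ii), since $\Delta_A$ is a frame homomorphism it has a right adjoint $\Gamma_A$, given explicitly by \eqref{TAdFM}. The internal adjunction $i\dashv\tau$---the statement $i(x)\le y\Leftrightarrow x\le\tau(y)$ valid in the internal logic of $\tps{E}$---holds in particular at generalized elements $g\colon A\to\soc{E}$ and $h\colon A\to\Omega_\ast$; since the orders these hom-sets inherit from the internal Heyting orders of $\soc{E}$ and $\Omega_\ast$ are exactly the subobject orders on $\sub{\tps{E}}{A}$ and $\sub{\tps{F}}{f^\ast A}$, this says $i\circ(-)\dashv\tau\circ(-)$ at each $A$. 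Uniqueness of right adjoints then gives $\Gamma_A=\tau\circ(-)$ under the isomorphisms above; in particular $\Gamma_A$ is natural in $A$, since precomposition with the fixed arrow $\tau$ is, once transported along the natural isomorphism $\Hom{\tps{E}}{A}{\Omega_\ast}\cong\sub{\tps{F}}{f^\ast A}$, and naturality of $\Delta_A$ was already noted in (i).

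The main obstacle is step (i): reconciling the abstractly defined $i$ (the unique frame map, or equivalently the classifying map of $f_\ast\top$ via \lemref{InFrame}) with the concrete pointwise operation ``restrict $f^\ast$ to subobjects''. Both routes need care---the direct chase with the interplay of transposition along $f^\ast\dashv f_\ast$ and subobject classifiers, or, in the slicker argument, the verification that restriction of $f^\ast$ genuinely assembles into a morphism of internal frames, which is precisely where naturality/Beck--Chevalley is used, so that the initiality clause of \lemref{InFrame} can be invoked. The remaining ingredients---preservation of adjunctions by $\Hom{\tps{E}}{A}{-}$ and the naturality bookkeeping---are routine.
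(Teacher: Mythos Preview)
Your argument is correct and uses the same core ingredients as the paper---Yoneda plus initiality of $\soc{E}$ among internal frames (\lemref{InFrame})---but you run it in the opposite direction: you start from the internal maps $i,\tau$ and identify their Yoneda transposes with $\Delta_A,\Gamma_A$, whereas the paper begins by defining $\Delta_A$ and $\Gamma_A$ explicitly, passes via Yoneda to internal maps $\delta,\gamma$, and then argues $\delta=i$ (by uniqueness of frame maps out of $\soc{E}$) and $\gamma=\tau$ (by checking directly that $\gamma$ classifies $f_\ast\top$). The one substantive thing the paper supplies that you omit is a concrete description of $\Gamma_A$: for a mono $m\colon X\rightarrowtail f^\ast A$, $\Gamma_A(X,m)$ is the pullback of $f_\ast m$ along the unit $\eta_A\colon A\to f_\ast f^\ast A$. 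Your abstract appeal to \eqref{TAdFM} is enough for the lemma as stated, but the pullback formula is what makes the subsequent verification that $\gamma$ classifies $f_\ast\top$ immediate, and it is also the form in which $\Gamma_A$ is typically used elsewhere.
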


\begin{proof}
Here, $\Delta_A$ is $f^\ast$ restricted to subobjects of $A$. It follows that $\Delta_A$ is an injective frame map, as $f^\ast$ is a faithful left exact left adjoint.
On the other hand, $\Gamma_A(X,m)$, for any mono $m : X \rightarrowtail f^\ast A$, is by definition the left-hand map in the following pullback
\[
\xymatrix{
\bullet \ar[r] \ar@{ >->}[d] & f_\ast X \ar@{ >->}[d]^{f_\ast m} \\
A \ar[r]_-{\eta_A} & f_\ast f^\ast A.
}
\]
The resulting two functions, natural in $A$, have the form:
\begin{equation}\label{IsoExtMod}
\Hom{\tps{E}}{A}{\soc{E}} \cong \sub{\tps{E}}{A} \leftrightarrows \sub{\tps{F}}{f^\ast A} \cong \Hom{\tps{F}}{f^\ast A}{\soc{F}} \cong \Hom{\tps{F}}{A}{\Omega_\ast}.
\end{equation}
By Yoneda they determine maps 
\[
\delta : \soc{E} \leftrightarrows  \Omega_\ast : \gamma,
\]
internally adjoint given that $\Delta_A \dashv \Gamma_A$, for each $A$ in $\tps{E}$. The map $\delta$ is monic, because each $\Delta_A$ is injective. It readily follows that $\delta = i$ and $\gamma = \tau$. For $\delta$ is a monic frame map and $\delta \dashv \gamma$, while the arrow 
\[
\gamma : \Omega_\ast \longrightarrow \soc{E}
\] 
obtained through the Yoneda lemma as above actually is the classifying map of the top element $f_\ast\top : 1 \rightarrow \Omega_\ast$.  
\end{proof}

\begin{lem}
The internal structure $\Omega_\ast$ is a faithful frame, \ie the canonical frame map $i : \soc{E} \rightarrow \Omega_\ast$ is a monomorphism.
\end{lem}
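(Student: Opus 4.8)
The plan is to reduce the claim to the injectivity of each comparison map $\Delta_A : \sub{\tps{E}}{A} \to \sub{\tps{F}}{f^\ast A}$ from \lemref{ExtIntAdj}. Recall that a morphism $g$ in any category is monic exactly when $\Hom{\tps{E}}{A}{g}$ is injective for every object $A$. Applying this to $i : \soc{E} \to \Omega_\ast$ and transporting along the natural isomorphisms $\Hom{\tps{E}}{A}{\soc{E}} \cong \sub{\tps{E}}{A}$ and $\Hom{\tps{E}}{A}{\Omega_\ast} \cong \sub{\tps{F}}{f^\ast A}$, we see that $i$ is monic iff post-composition with $i$ is injective on each of these hom-sets. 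But \lemref{ExtIntAdj} identifies post-composition with $i$, so transported, precisely with the left adjoint $\Delta_A$, which is $f^\ast$ restricted to subobjects. Hence it suffices to show that each $\Delta_A$ is injective on isomorphism classes of subobjects.

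For this I would use that $f$ is surjective, which by definition means $f^\ast$ is faithful. The key sub-fact is that a faithful exact functor between toposes reflects isomorphisms: if $m$ is a mono with $f^\ast m$ an isomorphism, then $f^\ast m$ is epic, so its cokernel pair is trivial; since $f^\ast$ preserves pushouts, $f^\ast$ applied to the cokernel pair of $m$ is trivial, hence by faithfulness the cokernel pair of $m$ is already trivial, so $m$ is epic, and being also monic in a (balanced) topos it is an isomorphism. Granting this, injectivity of $\Delta_A$ follows: if $f^\ast M = f^\ast N$ as subobjects of $f^\ast A$, then, since $f^\ast$ preserves binary intersections and monos, the mono $f^\ast(M \wedge N) \rightarrowtail f^\ast M$ is an isomorphism, so $M \wedge N \rightarrowtail M$ is an isomorphism, i.e.\ $M \le N$; symmetrically $N \le M$, whence $M = N$.

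The main obstacle is exactly this reflection-of-isomorphisms step; everything else is formal. One may prove it as sketched (from faithfulness together with preservation of finite colimits and the balancedness of toposes), or simply invoke the standard characterizations of surjective geometric morphisms — $f$ surjective iff $f^\ast$ faithful iff $f^\ast$ conservative — from the cited sources \cite{johnstone02, maclanemoerdijk92}. It is worth noting that this lemma is essentially already contained in the proof of \lemref{ExtIntAdj}, where $\delta = i$ was shown to be monic because each $\Delta_A$ is injective; the present statement merely records that conclusion, the surjectivity hypothesis being precisely what makes $f^\ast$, and hence each $\Delta_A$, faithful.
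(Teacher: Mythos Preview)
Your proof is correct and follows essentially the same route as the paper: reduce monicity of $i$ via Yoneda to injectivity of each $\Delta_A$, which in turn comes from faithfulness of $f^\ast$ for a surjective geometric morphism. The paper simply cites the injectivity of $\Delta_A$ from the proof of \lemref{ExtIntAdj} (where it was noted that $f^\ast$ is a faithful left exact left adjoint) and then invokes that the Yoneda embedding reflects monomorphisms; you unpack both steps in more detail, in particular supplying the standard conservativity argument, but the architecture is identical---as you yourself observe in your final paragraph.
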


\begin{proof}
Since the maps $\Delta_A$ in lemma \ref{ExtIntAdj} are injective, this means that $\Delta : \sub{\tps{E}}{-} \rightarrow \sub{\tps{F}}{f^\ast (-)}$ is a monic natural transformation. As $i: \soc{E} \rightarrow \Omega_\ast$ is obtained using the Yoneda lemma from the maps $\Delta_A$, it readily follows that $i$ is monic, because the Yoneda embedding reflects monomorphisms. 
\end{proof}

Formulas $\varphi$ (in one free variable, say) are thus interpreted equivalently in any of the following ways (let $M$ interpret the type of $x$):
\[
\sem{\varphi} \in \sub{\tps{F}}{f^\ast  M}, \ \ \ \ \ \  M^\ast  \xrightarrow{\sem{\varphi}} \soc{F}, \ \ \ \ \ \  M \xrightarrow{\sem{\varphi}} \Omega_\ast, 
\]
where the third one follows from definition \ref{DefMod}.

Moreover, let $\delta_{M^\ast}$ be the classifying map of the diagonal $\langle 1_{M^\ast} , 1_{M^\ast} \rangle : M^\ast \to M^\ast \times M^\ast$. We will write its transpose along $f^\ast \dashv f_\ast$ simply as

\begin{equation}\label{deltastar}
M \times M \xrightarrow{\delta_\ast} \Omega_\ast
\end{equation}
when $M$ is clear. Then we have:

\begin{lem}
The equality predicate for $M^*$ may  be interpreted by the map \eqref{deltastar}, obtained as the transpose along $f^\ast \dashv f_\ast$ of 
\[
(M \times M)^\ast \cong M^\ast  \times M^\ast  \xrightarrow{\delta_{M^\ast}} \soc{F}.
\]
\end{lem}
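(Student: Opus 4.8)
The plan is to verify that the map $M\times M \xrightarrow{\delta_M}\soc{E}\xrightarrow{i}\Omega_\ast$ assigned by Definition~\ref{DefMod} to the equality predicate on the type interpreted by $M$ agrees with $\delta_\ast$, the transpose along $f^\ast\dashv f_\ast$ of $\delta_{M^\ast} : M^\ast\times M^\ast\cong (M\times M)^\ast\to\soc{F}$. Since transposition is a bijection $\Hom{\tps{E}}{M\times M}{f_\ast\soc{F}}\cong\Hom{\tps{F}}{(M\times M)^\ast}{\soc{F}}$, it is enough to show that the transpose of $i\delta_M$ is $\delta_{M^\ast}$, up to the canonical isomorphism $(M\times M)^\ast\cong M^\ast\times M^\ast$ arising from left-exactness of $f^\ast$.

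First I would recall from the proof of Lemma~\ref{ExtIntAdj} that $i$ is precisely the transpose of the canonical comparison map $\theta : f^\ast\soc{E}\to\soc{F}$, i.e.\ the classifying map of the subobject $f^\ast\top : f^\ast 1 = 1\rightarrowtail f^\ast\soc{E}$: the external transformation $\Delta_{(-)}$ inducing $i$ is $f^\ast$ applied to subobjects, which under the subobject-classifier isomorphisms is exactly postcomposition with $\theta$. Using the standard identity that the transpose of $g\circ f^\ast h$ equals $(\text{transpose of }g)\circ h$, the transpose of $i\delta_M = i\circ\delta_M$ is therefore $\theta\circ f^\ast(\delta_M)$.

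It then remains to show $\theta\circ f^\ast(\delta_M)=\delta_{M^\ast}$. Applying the left exact functor $f^\ast$ to the pullback square exhibiting $\delta_M$ as the classifying map of the diagonal $\Delta_M : M\rightarrowtail M\times M$ gives a pullback square whose bottom edge is $f^\ast(\delta_M)$, whose left edge is $f^\ast(\Delta_M)$ — which is the diagonal $\Delta_{M^\ast}$ once we identify $f^\ast(M\times M)$ with $M^\ast\times M^\ast$ — and whose right edge is $f^\ast\top$. Pasting this on top of the pullback square defining $\theta$ as the classifying map of $f^\ast\top$ produces a pullback rectangle witnessing that $\theta\circ f^\ast(\delta_M)$ classifies the diagonal $\Delta_{M^\ast}$; since $\delta_{M^\ast}$ classifies the same subobject, uniqueness of classifying maps gives the equality, and transposing back yields $i\delta_M=\delta_\ast$. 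The only real bookkeeping to watch is the two left-exactness facts — that $i$ is the transpose of $\theta$, and that $f^\ast$ carries $\Delta_M$ to $\Delta_{M^\ast}$ under the canonical product isomorphism — but neither is a genuine obstacle; everything else is just uniqueness of classifiers.
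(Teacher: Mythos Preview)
Your argument is correct, but you have read the lemma more strongly than the paper intends. The statement only claims that $\delta_\ast$ is a \emph{sound} interpretation of equality in the geometric model; the paper's proof is a single sentence --- soundness of $\delta_{M^\ast}$ in $\tps{F}$ transfers along the adjunction $f^\ast\dashv f_\ast$ to soundness of its transpose $\delta_\ast$ in the $\Omega_\ast$-valued model. You instead prove the identity $\delta_\ast = i\circ\delta_M$, which is the content of the \emph{later} Proposition in the paper (the one explicitly flagged as ``the main thing that needs to be verified'' to see that geometric models are algebraic ones). That of course implies the lemma, since $i\delta_M$ is sound by the general soundness result for algebraic models.

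For that Proposition your route genuinely differs from the paper's. The paper argues indirectly: first it invokes \lemref{BoxLem1} to get $\delta_\ast = i\tau\delta_\ast$, then it shows $\tau\delta_\ast=\delta_D$ by a pullback-pasting argument through the unit $\eta$ of $f^\ast\dashv f_\ast$ (using surjectivity of $f$ to know the unit square is a pullback). You instead transpose directly: identify $i$ as the transpose of the comparison map $\theta:f^\ast\soc{E}\to\soc{F}$, so that the transpose of $i\delta_M$ is $\theta\circ f^\ast(\delta_M)$, and then recognise the latter as $\delta_{M^\ast}$ by left-exactness of $f^\ast$ and uniqueness of classifiers. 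This is cleaner and avoids both the auxiliary fixed-point lemma and the explicit use of surjectivity at that step; the paper's detour through $\tau$ has the minor conceptual advantage of making the role of the modal operator visible, but your argument is more economical.
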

\noindent The proof is immediate, given the soundness of the interpretation with respect to~$\delta_{M^\ast}$.

\begin{mydef}
By a \emph{geometric model} we shall mean a model derived from a geometric morphism in this way; specifically, where $f : \tps{F} \to \tps{E}$ and $H=f_*(\Omega_{\tps{F}})$.
\end{mydef}


To show, finally, that  geometric models are a special case of  algebraic ones, the main thing that needs to be verified is that equality is interpreted the same way in each case, i.e.:
\[
\delta_\ast = i \mathbin{\circ} \delta_M.
\]

First, we make the following observation:

\begin{lem}\label{BoxLem1}
For any map $\alpha : D \rightarrow \Omega_\ast$, we have $i \tau \mathbin{\circ} \alpha = \alpha$ iff the subobject classified by the transpose $\widetilde{\alpha} : f^\ast D \rightarrow \soc{F}$ of $\alpha$ is of the form $f^\ast m : f^\ast A \rightarrowtail f^\ast D$, for some $m : A \rightarrowtail D$ in $\tps{E}$. \hfill $\Box$
\end{lem}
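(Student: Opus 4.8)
The plan is to unwind the meaning of the composite $i\tau$ in terms of the external adjunction $\Delta_A \dashv \Gamma_A$ of \lemref{ExtIntAdj}, and then read off the statement as a fixed-point characterization of the comonad $\Delta\Gamma$. First I would transport the data along the natural isomorphism $\Hom{\tps{E}}{D}{\Omega_\ast} \cong \sub{\tps{F}}{f^\ast D}$ of \eqref{IsoExtMod}, under which a map $\alpha : D \to \Omega_\ast$ corresponds to the subobject $S \rightarrowtail f^\ast D$ classified by its transpose $\widetilde\alpha : f^\ast D \to \soc{F}$. By \lemref{ExtIntAdj}, the internal adjunction $i \dashv \tau$ is induced by the external adjunction $\Delta_D \dashv \Gamma_D$, and hence — by naturality and Yoneda — the endomap ``$i\tau \mathbin{\circ} (-)$'' on $\Hom{\tps{E}}{D}{\Omega_\ast}$ corresponds exactly to the comonad $\Delta_D \Gamma_D$ on $\sub{\tps{F}}{f^\ast D}$. (Here one uses that $\tau$ corresponds to $\Gamma_D$ and $i$ to $\Delta_D$, which is the content of the final lines of the proof of \lemref{ExtIntAdj}; composing internally corresponds to composing the external operations, since the isomorphisms in \eqref{IsoExtMod} are natural in $D$.) Therefore $i\tau \mathbin{\circ} \alpha = \alpha$ holds iff $\Delta_D \Gamma_D(S) = S$ as subobjects of $f^\ast D$, i.e.\ iff $S$ is an Eilenberg--Moore coalgebra / fixed point of the idempotent-on-fixed-points comonad $\Delta_D\Gamma_D$.

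Next I would identify this fixed-point condition with the stated one, namely that $S$ is (up to iso over $f^\ast D$) of the form $f^\ast m : f^\ast A \rightarrowtail f^\ast D$ for some subobject $m : A \rightarrowtail D$ in $\tps{E}$. One inclusion is immediate: if $S \cong f^\ast A = \Delta_D(A)$ for $A \in \sub{\tps{E}}{D}$, then since $\Delta_D \dashv \Gamma_D$ with $\Delta_D$ fully faithful (it is injective, being the restriction of the faithful left-exact $f^\ast$; more precisely the counit $\Delta_D\Gamma_D \Delta_D \Rightarrow \Delta_D$ is iso because $\Gamma_D\Delta_D \cong \mathrm{id}$ on $\sub{\tps{E}}{D}$ as $\Delta_D$ is injective), we get $\Delta_D\Gamma_D(S) \cong \Delta_D\Gamma_D\Delta_D(A) \cong \Delta_D(A) \cong S$. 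Conversely, if $\Delta_D\Gamma_D(S) \cong S$, then putting $A := \Gamma_D(S) \in \sub{\tps{E}}{D}$ we have $f^\ast A = \Delta_D(A) = \Delta_D\Gamma_D(S) \cong S$, and the composite $f^\ast A \cong S \rightarrowtail f^\ast D$ is, by construction of $\Delta_D$ as $f^\ast$ on subobjects, exactly $f^\ast m$ for $m : A \rightarrowtail D$ the subobject $\Gamma_D(S)$. Finally I would note that the transpose $\widetilde\alpha$ classifies $S$ by the very definition of the correspondence \eqref{IsoExtMod} (the middle isomorphism $\sub{\tps{F}}{f^\ast D} \cong \Hom{\tps{F}}{f^\ast D}{\soc{F}}$ is ``take the classifying map''), which is what makes the reformulation in the lemma statement match.

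The main obstacle I anticipate is bookkeeping rather than conceptual: one must be careful that the three-fold composite isomorphism in \eqref{IsoExtMod} genuinely conjugates internal composition ``$i\tau\mathbin{\circ}(-)$'' into external composition ``$\Delta_D\Gamma_D$'' — i.e.\ that the Yoneda-style passage between internal maps into $\Omega_\ast$, $\soc{E}$ and external subobject operations is functorial and not merely pointwise bijective. This is exactly the naturality-in-$D$ clause already asserted in \lemref{ExtIntAdj}, so it may be invoked; but it should be stated explicitly that this is where it is used. A secondary subtlety is the direction of the adjunction: $\Delta_D$ is the left adjoint and $i$ its internal avatar, so $i\tau$ is the comonad $\Delta_D\Gamma_D$ (not the monad $\Gamma_D\Delta_D$), and fixed points of a left-exact idempotent comonad are precisely the objects in the image of $\Delta_D$ — which is the clean way to phrase the equivalence and avoids any explicit manipulation of the pullback square defining $\Gamma_A$.
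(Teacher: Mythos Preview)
Your argument is correct. The paper does not actually give a proof of this lemma---it is stated with a terminal $\Box$ and no argument---so there is nothing to compare against; your approach via the external adjunction $\Delta_D \dashv \Gamma_D$ of \lemref{ExtIntAdj}, identifying $i\tau \mathbin{\circ}(-)$ with the idempotent comonad $\Delta_D\Gamma_D$ and characterizing its fixed points as the image of $\Delta_D = f^\ast$, is precisely the intended unpacking.
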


\begin{prop}
For any object $D$ in $\tps{E}$, and any geometric morphism $f : \tps{F} \rightarrow \tps{E}$:
\[
\delta_\ast = i \mathbin{\circ} \delta_D.
\]
\end{prop}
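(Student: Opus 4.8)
The plan is to show that both $\delta_\ast$ and $i \circ \delta_D$ satisfy the characterization given in \lemref{BoxLem1}, and then identify them by exploiting the universal property that defines equality, transported across the adjunction $f^\ast \dashv f_\ast$. First I would observe that $i \circ \delta_D$ is fixed by $i\tau$: indeed $\tau i = 1_{\soc{E}}$ (as $i$ is a split mono with retraction $\tau$, from $i \dashv \tau$ and $i$ monic, cf.\ the proof of \lemref{FunExtLem}), so $i\tau \circ (i\delta_D) = i\delta_D$. By \lemref{BoxLem1}, the transpose $\widetilde{i\delta_D} : f^\ast(D\times D) \to \soc{F}$ therefore classifies a subobject of the form $f^\ast m$ for some $m : A \rightarrowtail D \times D$ in $\tps{E}$; since $i\delta_D$ classifies (internally) the diagonal, $m$ is the diagonal $\langle 1_D, 1_D\rangle : D \rightarrowtail D\times D$, so $\widetilde{i\delta_D}$ classifies $f^\ast\langle 1_D,1_D\rangle \cong \langle 1_{f^\ast D}, 1_{f^\ast D}\rangle$, i.e.\ the diagonal on $f^\ast D$. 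But that is exactly what $\delta_{(D\times D)^\ast} = \delta_{D^\ast \times D^\ast}$ classifies (using $f^\ast$ preserves products and hence diagonals), and $\delta_\ast$ is by definition \eqref{deltastar} the transpose of $\delta_{D^\ast}$. Hence $\widetilde{i\delta_D}$ and $\widetilde{\delta_\ast}$ classify the same subobject of $f^\ast(D\times D)$, so they are equal, and transposing back gives $i\delta_D = \delta_\ast$.

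More carefully, the key steps in order are: (1) establish $\tau i = 1$; (2) apply \lemref{BoxLem1} to $\alpha = i\delta_D$ to learn its transpose classifies an $f^\ast$-pulled-back subobject; (3) use the definition of $\delta_D$ as the classifying map of $\langle 1_D, 1_D\rangle$ together with naturality/stability of classifying maps under the pullback functor $f^\ast$ to pin that subobject down as the diagonal on $f^\ast D$; (4) recall that $\delta_{D^\ast}$ also classifies the diagonal on $D^\ast = f^\ast D$, and that by construction $\delta_\ast$ is its transpose along $f^\ast \dashv f_\ast$; (5) conclude $\widetilde{i\delta_D} = \delta_{D^\ast} = \widetilde{\delta_\ast}$ by uniqueness of classifying maps, hence $i\delta_D = \delta_\ast$ by faithfulness of transposition.

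An alternative, perhaps cleaner, route avoids \lemref{BoxLem1}: directly compute the transpose $\widetilde{i\delta_D}$ of $i\delta_D$. Since transposition along $f^\ast \dashv f_\ast$ sends $\alpha : D \to f_\ast X$ to $\varepsilon_X \circ f^\ast\alpha : f^\ast D \to X$, and since the composite $\soc{E} \xrightarrow{i} \Omega_\ast = f_\ast\soc{F}$ transposes to (a map naturally identified with) $f^\ast\soc{E} \to \soc{F}$, one checks using \lemref{ExtIntAdj} — which identifies $i$ with the natural transformation induced by $\Delta_A = f^\ast|_{\mathrm{Sub}}$ — that $\widetilde{i\delta_D}$ is precisely $f^\ast$ applied to the subobject classified by $\delta_D$, i.e.\ $f^\ast\langle 1_D,1_D\rangle$, whose classifying map is $\delta_{f^\ast D} = \delta_{D^\ast}$. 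Then $i\delta_D$, being the transpose of $\delta_{D^\ast}$, equals $\delta_\ast$ by definition.

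The main obstacle I anticipate is step (3)/(the transpose computation): making precise that transposing $i\delta_D$ along $f^\ast\dashv f_\ast$ really yields the classifying map of $f^\ast$ of the diagonal, rather than of some other subobject with the same class. This requires carefully tracking how the natural isomorphism $\mathrm{Sub}_\tps{F}(f^\ast A) \cong \Hom{\tps{E}}{A}{\Omega_\ast}$ of \lemref{ExtIntAdj} interacts with the subobject-classifier isomorphism $\mathrm{Sub}_\tps{E}(A) \cong \Hom{\tps{E}}{A}{\soc{E}}$ and with the naturality (Beck–Chevalley / stability under $f^\ast$) of classifying maps — essentially unwinding the diagram following \eqref{IsoExtMod}. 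Everything else is formal bookkeeping with adjoint transposes and uniqueness of classifying maps.
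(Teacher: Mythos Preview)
Your alternative route is correct and is genuinely different from the paper's argument. The paper proceeds from $\delta_\ast$: it first invokes \lemref{BoxLem1} on $\alpha = \delta_\ast$ (whose transpose $\delta_{D^\ast}$ classifies $f^\ast$ of the diagonal) to obtain $\delta_\ast = i\tau\delta_\ast$, and then proves $\tau\delta_\ast = \delta_D$ by an explicit pullback chase involving the unit $\eta$ of $f^\ast \dashv f_\ast$ and the fact that surjectivity of $f$ makes $\eta_D\times\eta_D$ monic. You instead start from $i\delta_D$ and transpose directly: by \lemref{ExtIntAdj}, postcomposition with $i$ is exactly the Yoneda image of $\Delta_{D\times D} = f^\ast|_{\mathrm{Sub}}$, so $i\delta_D$ corresponds under $\Hom{\tps{E}}{D\times D}{\Omega_\ast}\cong\sub{\tps{F}}{f^\ast(D\times D)}$ to $f^\ast\langle 1_D,1_D\rangle = \langle 1_{D^\ast},1_{D^\ast}\rangle$, whose classifying map is $\delta_{D^\ast}$; transposing back gives $\delta_\ast$ by definition. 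This is shorter, avoids both \lemref{BoxLem1} and the unit-pullback argument, and does not explicitly use surjectivity at this step (it is already baked into the identification of $i$ in \lemref{ExtIntAdj}). The paper's approach, by contrast, makes the role of the unit and of surjectivity visible, and yields the intermediate identity $\tau\delta_\ast = \delta_D$ as a byproduct.

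One caution about your \emph{first} approach: as phrased, the step ``since $i\delta_D$ classifies (internally) the diagonal, $m$ is the diagonal'' is not justified by the \emph{statement} of \lemref{BoxLem1}, which only asserts the existence of \emph{some} $m$ with $\widetilde{\alpha}$ classifying $f^\ast m$. To pin $m$ down as the diagonal you need either to observe that $m$ is classified by $\tau\alpha = \tau i\delta_D = \delta_D$ (which is really the content behind \lemref{BoxLem1}), or to run the \lemref{ExtIntAdj} computation---which is precisely your alternative. You correctly flag this as the main obstacle; just be aware that resolving it collapses the first approach into the second.
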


\begin{proof}
We prove this by showing 
\[
\tau \mathbin{\circ} \delta_\ast  = \delta_D,
\]
whence the statement follows from $\delta_\ast = i \mathbin{\circ} \tau \mathbin{\circ} \delta_\ast  = i \mathbin{\circ} \delta_D$, where the identity $\delta_\ast = i \mathbin{\circ} \tau \mathbin{\circ} \delta_\ast$ holds by applying  \lemref{BoxLem1} to $\delta_\ast$. 

The proof is essentially contained in the following diagram
\[
\xymatrix{
D \ar[rr]^{\eta_D} \ar[dd]_{\Delta_D}  && (D^\ast)_\ast \ar[rr] \ar[dd]|{(\Delta^\ast)_\ast} && 1\ar[dd]^\top \ar@{=}[drr]&\\
&&&&&&1\ar[dd]^\top\\
D \times D \ar[rr]^-{\eta_D \times \eta_D} \ar@/_1.7pc/[rrrr]_{\delta_\ast} \ar@/_1.7pc/[drrrrrr]_{\delta_D} && (D^\ast)_\ast \times (D^\ast)_\ast \ar[rr]^-{(\delta_{D^\ast})_\ast} && \Omega_\ast \ar[drr]_\tau &\\
&&&&&&\Omega_{\tps{E}}\\ 
}
\]
where $\Delta_D = \langle 1_D , 1_D \rangle$, $\eta$ is the unit of $f^\ast \dashv f_\ast$, and $\delta, \tau$ denote the respective classifying maps. 
The square in the middle is a pullback, since $f_\ast$ preserves them. Moreover, by the definition of $\delta_D$, the large outer square is a pullback. Note further that $\delta_\ast = (\delta_{D^\ast})_\ast \mathbin{\circ} \eta_{D\times D}$, by the definition of $\delta_\ast$ as the transpose of $\delta_{D^\ast}$ along $f^\ast \dashv f_\ast$. Thus the desired equality would follow if the unit square were a pullback, for then 
\[
\tau \mathbin{\circ} (\delta_{D^\ast})_\ast \mathbin{\circ} \eta_{D\times D} = \tau \mathbin{\circ} \delta_\ast 
\]
 would classify  $\Delta_D$, and so $\tau \mathbin{\circ} \delta_\ast  = \delta_D$.  This is in fact the case. For $f : \tps{F} \rightarrow \tps{E}$ being surjective (\ie $f^\ast$ faithful) implies that the unit components, and therefore $\eta_D \times \eta_D$, are monic. A direct verification then shows that the square is a pullback.
\end{proof}




\begin{exam} \emph{Kripke Models.} 
As is well known, any functor $F : \cat{C} \rightarrow \cat{D}$ induces a geometric morphism
\[
f^\ast\dashv f_\ast : \srp{C} \rightarrow \srp{D} ,
\]
where $f^\ast$ is precomposition with $F$, and $f_\ast$ is a right Kan extension. Let $\cat{C} = |\cat{D}|$ and $F$ the inclusion $i: |\cat{D}| \rightarrow\cat{D}$. Then the induced geometric morphism $i^\ast \dashv i_\ast : \cat{Sets}^{|\cat{D}|} \rightarrow \srp{D}$ is surjective. The subobject classifier $\Omega_\cat{D}$ in $\srp{D}$ consists, for each $D$, of the set of cosieves on $D$, which can be construed as the functor category 
\[
2^{D/\cat{D}},
\]
where $2$ is viewed as the poset $\{0 \leq 1\}$; while $\Omega_{|\cat{D}|}(D) = 2$, for each $D$ in \cat{D}.  


On the other hand, by the definition of right Kan extension, $i_\ast\Omega_{|\cat{D}|}(D) = \prod_{h \in D/\cat{D}} 2 = 2^{|D/\cat{D}|}$, as can also be seen from 
\[
i_\ast\Omega_{|\cat{D}|}(D) \cong \Hom{\widehat{\cat{D}}}{\y D}{i_\ast\Omega_{|\cat{D}|}} \cong \Hom{\widehat{|\cat{D}|}}{i^\ast (\y D)}{\Omega_{|\cat{D}|}}.
\]
The last set is (isomorphic to) the set of subfamilies of the functor $i^\ast (\y D) : |\cat{D}| \rightarrow \cat{Sets}$, by the definition of the subobject classifier $\Omega_{|\cat{D}|}$: each natural transformation
\[
i^\ast\y D = \y D\mathbin{\circ} i = \Hom{\cat{D}}{D}{-} \longrightarrow 2
\]
determines, for each $D'$ in \cat{D}, a set of arrows $D \rightarrow D'$. On arrows $h : D \rightarrow D''$, the functor $i_\ast\Omega_{|\cat{D}|}$ is the function $i_\ast\Omega_{|\cat{D}|}(h) : i_\ast\Omega_{|\cat{D}|}(D) \rightarrow i_\ast\Omega_{|\cat{D}|}(D'')$ defined as
\[
i_\ast\Omega_{|\cat{D}|}(h)(A) = \{f : D'' \rightarrow X \mid f \mathbin{\circ} h \in A\}.
\]
The components of the (internal) adjunction $i : \Omega_\cat{D} \leftrightarrows i_\ast\Omega_{|\cat{D}|} : \tau$ then read
\[
i_D : 2^{D/\cat{D}} \leftrightarrows 2^{|D/\cat{D}|} : \tau_D,
\]
where $i_D \dashv \tau_D$ ``externally''. It is not hard to see that $i$ is the inclusion, while 
\[
\tau_D(A) = \bigvee \{S \in 2^{D/\cat{D}} \mid i_D(S) \leq A\},
\]
by the definition of right adjoint to the frame map $i$ (cf. \eqref{TAdFM}). In words, $\tau$ maps any family of arrows with domain $D$ to the largest cosieve on $D$ contained in it.
In particular, when $\cat{D}$ is a preorder, then $D/\cat{D} = \uparrow (D)$, the upward closure of $D$; while $2^{D/\cat{D}}$ is the set of all monotone maps $\uparrow(D) \rightarrow 2$, \ie upsets of $\uparrow (D)$, while $2^{|D/\cat{D}|}$ is the set of arbitrary subsets of $\uparrow (D)$.

An arrow $\varphi : E \rightarrow i_\ast\Omega_{|\cat{D}|} = 2^{|-/\cat{D}|}$ in $\srp{D}$ defines an indexed subfamily $P$ of the functor $F$, and conversely. Explicitly, given such  $\varphi : E \rightarrow i_\ast\Omega_{|\cat{D}|}$, define subsets $P_\varphi(D) \subseteq E(D)$, for each $D$ in \cat{D} and $a \in E(D)$, by
\begin{equation}\label{KMods1}
a \in P_\varphi(D) \mathiff 1_D \in \varphi_D(a).
\end{equation}
Conversely, given maps $E(D) \rightarrow 2$, \ie components of an arrow $i^\ast E \rightarrow \Omega_{|\cat{D}|}$ in $\cat{Sets}^{|\cat{D}|}$, or equivalently a subfamily $P$ of $E$, define a natural transformation $\varphi_P : E \rightarrow i_\ast\Omega_{|\cat{D}|}$ by 
\begin{equation}\label{KMods2}
(\varphi_P)_D(a) = \{f : D \rightarrow C \mid E(f)(a) \in P(C) \},
\end{equation}
These constructions are mutually inverse and so describe the canonical isomorphism
\[
\Hom{}{E}{i_\ast\Omega_{|\cat{D}|}} \cong \Hom{}{i^\ast E}{\Omega_{|\cat{D}|}}  \cong \sub{}{i^\ast E}.
\]
Note also that the transpose $\overline{\varphi} = \varepsilon\varphi^\ast$ of $\varphi : E \rightarrow \Omega_\ast$ along the adjunction $f^\ast \dashv f_\ast$ actually is the classifying map in $\cat{Sets}^{|\cat{D}|}$ of the subobject $P_\varphi$ of $f^\ast E$ defined in \eqref{KMods1}:
\begin{align*}
\varepsilon_C\varphi^\ast_C(a) = 1 & \mathiff 1_C \in \varphi^\ast_C(a)\\
& \mathiff 1_C \in \varphi_C(a) \\
& \mathiff a \in P_\varphi(C),
\end{align*}
for any $a \in E(C)$.

On the other hand, considering $\Omega_{\cat{D}} = 2^{D/\cat{D}}$ instead of $2^{|D/\cat{D}|}$, the same definitions \eqref{KMods1} and \eqref{KMods2} establish a correspondence between sub\emph{functors} of $E$ and their classifying maps in $\srp{D}$.  In particular, the classifying map of a subfunctor of $E$ factors through $i_\ast\Omega_{|K|}$ via $\tau$.

Thus, when \cat{D} is a preorder,  algebraic models in the complete Heyting algebra $i_\ast \Omega_{|K|}$ are precisely Kripke models on \cat{D}. The ``domain'' of the model is given by the functor $E$, while each $E(D)$ is the domain of individuals at each world $D$. Each formula determines, as an arrow $\varphi : E \rightarrow i_\ast \Omega_{|K|}$, a subfamily of $E$, that is a family $(P_\varphi(D) \subseteq E(D))$. Then $\tau$ determines the largest compatible subfamily of that family, \ie a family closed under the action of $E$. Indeed, for $x \in E(D)$, 
\[
x \in P_{\tau\varphi}(D) \mathiff 1_D \in (\tau\varphi)_D(x).
\]
Now $(\tau\varphi)_D(x)$ is the maximal sieve on $D$ just in case $\varphi_D(x)$ is. So, if satisfied, the right-hand side means that $x \in P_\varphi(D)$ and moreover $F(f)(x) \in P_\varphi(C)$, for all $C \geq D$. Semantically speaking, $x$ satisfies $\tau\varphi$ (at $D$) just in case $x$ (or rather its ``counterpart'' $F_{CD}(x)$) satisfies $\varphi$ in all worlds accessible from $D$.

Thus we recovered the natural adjunction 
\[
\Delta_E : \sub{}{E} \leftrightarrows \sub{}{i^\ast E} : \Gamma_E
\]
that succinctly describes the algebraic structure of Kripke models.

Lastly, presheaf semantics reduces to standard Kripke semantics for propositional modal logic in the following sense. In the latter, propositional formulas are recursively assigned elements in $\pow{\cat{K}}$, for a preorder $\cat{K}$. Let $\pow{\downarrow(-)} = \Omega_\ast$ be the composite functor 
\[
\cat{K} \xrightarrow{\downarrow} \cat{Sets} \xrightarrow{\pow{-}} \cat{Sets}^{op}.
\]
Observe that 
\[
\pow{\cat{K}} \cong \Hom{\prs{K}}{1}{\pow{\downarrow(-)}},
\]
via assignments (where $\varphi \subseteq \pow{\cat{K}}$)
\[
\varphi \mapsto (\varphi_k =\ \downarrow(k \cap \varphi) \mid k \in \cat{K})
\]
and 
\[
(\varphi_k \mid k \in \cat{K}) \mapsto \bigcup_k \varphi_k.
\]

Thus modelling formulas (in one variable, say) by maps of presheaves 
\[
M \longrightarrow \pow{\downarrow(-)} = \Omega_\ast
\] 
yields precisely the familiar Kripke model idea for propositions, \ie closed formulas. Moreover, for constant domains:
\[
\Hom{\prs{K}}{\Delta M}{\pow{\downarrow(-)}} \cong \Hom{\cat{Sets}}{M}{\varprojlim\pow{\downarrow(-)}} \cong \Hom{\cat{Sets}}{M}{\pow{K}}.
\]
Here, $\Delta : \cat{Sets} \longrightarrow \prs{K}$ is the functor $\Delta(M)(k) = M$, for any set $M$ and $k \in \cat{K}$.
A function $\varphi : M \longrightarrow \pow{K}$  assigns to each individual in the domain $M$ a set of worlds for which the individual satisfies the formula represented by $\varphi$.

\end{exam}

\paragraph{Kripke-Joyal forcing:}
Another way of seeing the close relation between presheaf semantics and Kripke semantics is via the notion of ``Kripke-Joyal forcing'' \cite{maclanemoerdijk92, lambekscott88}. For any topos $\tps{E}$ one can define a forcing relation $\Vdash$ to interpret intuitionistic higher-order logic .
Given an arrow $\varphi : M \rightarrow \soc{E}$,  let $S_\varphi$ be the subobject of $M$ classified by $\varphi$. Then for any $a : X \rightarrow M$, define
\begin{equation}\label{DefFor}
X \Vdash \varphi(a) \mathiff a\ \text{factors through}\ S_\varphi.
\end{equation}
This holds iff $\varphi a = \tr_X$, where $\tr_X$ is the arrow $\top\mathbin{\circ} \,{!_X} :X \rightarrow 1 \rightarrow\soc{E}$. 
The idea is that $\varphi$ corresponds to a formula, while $a$ is a generalized element of $M$, thought of as a term $x : X \mid a : M$. In fact, $\varphi$ and $a$ \emph{are} terms in the internal language of $\tps{E}$, reinterpreted into $\tps{E}$ by the forcing relation. The relation $\Vdash$ satisfies certain recursive clauses for all the logical connectives \cite{maclanemoerdijk92, lambekscott88}.
Conversely, starting with an interpretation of the basic symbols of a higher-order type theory in a topos $\tps{E}$ (as maps into $\soc{E}$), then these recursive clauses determine when a formula is true (``at an object $X$''). When $a$ is a closed term, \ie a constant, for which one may assume $X = 1$, then this says that the two arrows 
\[
\xymatrix{
1 \ar[r]^a \ar@/_1.5pc/[rr]_\top & M \ar[r]^\varphi & \soc{E}
}
\]
are equal; \ie the closed sentence $\varphi [a/x]$ is ``true''. In general, the forcing relation thus defines when formulas are true (at $X$), much as  in Kripke semantics, as we now illustrate.

Consider presheaf toposes of the form $\prs{C}$. In this case, the forcing relation $X \Vdash \varphi(a) $ can be restricted to objects $X$ in $\tps{E}$ forming a generating set.\foot{Cf. \cite{lambekscott88}. One says that a set $S$ of objects from $\tps{E}$ is \emph{generating}, iff for any $f \neq g : A \rightrightarrows B$ in $\tps{E}$, there is an arrow $x : X \rightarrow A$, for some $X \in S$, such that $fx \neq gx$.}
For presheaf toposes $\prs{C}$ the representable functors $\y C$ form a generating set, so one may assume that $X = \y C$, for some object $C$ in \cat{C}. Also, by the Yoneda lemma, generalized elements $a:\y C \rightarrow M$ may be replaced by actual elements $a \in M(C)$. To say that $a : \y C \rightarrow M$ factors through a subobject $S \in \sub{\tps{E}}{M}$ is then equivalent to saying that the corresponding element $a \in M(C)$ actually lies in $S(C)$.
As a result, the forcing condition becomes
\[
\y C \Vdash \varphi(a) \mathiff a \in S_\varphi(C),
\]
where, as before, $\varphi$ classifies the subobject $S_\varphi$ of $M$. We shall hereafter write $C \Vdash \dots$ instead of $\y C \Vdash\dots $.

Now consider the standard $\Omega_\ast$-valued model for classical higher-order modal logic in a presheaf topos $\prs{C}$, associated with the canonical geometric morphism $\cat{Sets}^{|\cat{C}|} \rightarrow \prs{C}$.  We define another forcing relation $C \Vdash_* \varphi(a)$ which takes this modal logic into account.

\begin{mydef}\label{DefFor4}
For any presheaf topos $\prs{C}$,  define a \emph{forcing relation} $\Vdash_*$ for arrows $\varphi : M \rightarrow \Omega_\ast$, objects $C$ in \cat{C}, and elements $a \in M(C)$ by:
\begin{equation}\label{DefFor2}
C \Vdash_* \varphi(a) \mathiff C \Vdash\overline{\varphi}(a),
\end{equation}
where $\Vdash$ on the right-hand side is the usual forcing relation \wrt $\cat{Sets}^{|\cat{C}|}$ (as defined in \eqref{DefFor}), and $\overline{(-)}$ indicates transposition along $f^\ast \dashv f_\ast$. 
\end{mydef}

Further analysing the right-hand side of \eqref{DefFor2} gives:
\begin{equation}\label{DefFor3}
C \Vdash \overline{\varphi}(a) \mathiff a \in S_{\overline{\varphi}}(C)
\end{equation}
where $S_{\overline{\varphi}}$ is the subobject of $M^*$ classified by $\overline{\varphi}$ in $\cat{Sets}^{|\cat{C}|}$. 

\begin{prop}
Let $\Vdash_*$ be the forcing relation of Definition \ref{DefFor4}. Then for all $\varphi,\psi : M \rightarrow \Omega_\ast$ and $a \in M(C)$ the following hold:
\begin{align*}
 &C \Vdash_* \top\ &&\text{always}\\
 &C \Vdash_* \bot\ &&\text{never}\\
 &C \Vdash_* \varphi(a) \land \psi(a) &&\mathiff\quad C \Vdash_* \varphi(a) \ \text{and}\ C \Vdash_* \psi(a)
\\
&C \Vdash_* \varphi(a) \lor \psi(a) &&\mathiff\quad C \Vdash_* \varphi(a) \ \text{or}\ C \Vdash_* \psi(a)
\\
 &C \Vdash_* \varphi(a) \Rightarrow \psi(a) &&\mathiff\quad C \Vdash_* \varphi(a) \ \text{implies}\ C \Vdash_* \psi(a)
\\
 &C \Vdash_* \forall x \varphi(x,a) &&\mathiff\quad C \Vdash_* \varphi(b,a)\ \text{for all}\ b \in M(C)
\\
 &C \Vdash_* \exists x \varphi(x,a) &&\mathiff\quad C \Vdash_* \varphi(b,a)\ \text{for some}\ b \in M(C)
\\
 &C \Vdash_* \Box\varphi(a) &&\mathiff\quad D \Vdash_* \varphi(p^\ast a)\ \text{for every}\ p: D \rightarrow C
\\
  &C \Vdash_* t(a) \in u(a) &&\mathiff\quad  (1_C, t_C(a)) \in (u_C(a))_C,\\
 &&&\quad\quad\quad\quad \text{for}\ t : M \rightarrow N \text{and}\ u : M \rightarrow \Omega_\ast^N
\end{align*}
where $\Box = i\tau$, and $\forall x \varphi$ is the arrow $M \xrightarrow{\widehat{\varphi}}\Omega_\ast^M \xrightarrow{\forall_M} \Omega_\ast$, with $\widehat{\varphi}$ the exponential transpose of $M \times M \xrightarrow{\varphi} \Omega_\ast$, and similarly for $\exists x \varphi(x,a)$. 
\end{prop}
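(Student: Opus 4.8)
The plan is to reduce each clause, via the equivalence $C\Vdash_*\varphi(a)\iff a\in S_{\overline\varphi}(C)$ of \eqref{DefFor3}, to a statement about the subobject $S_{\overline\varphi}\rightarrowtail f^\ast M$ of $\srpp{C}$, and then to exploit that $\srpp{C}$ is a presheaf topos over the \emph{discrete} category $|\cat{C}|$, so that the relevant subobject‑lattice structure is computed separately at each $C$ and reduces there to plain set theory. The first point to record is that the composite bijection
\[
\Hom{\prs{C}}{M}{\Omega_\ast}\ \cong\ \Hom{\srpp{C}}{f^\ast M}{\soc{F}}\ \cong\ \sub{\srpp{C}}{f^\ast M},\qquad \varphi\longmapsto S_{\overline\varphi},
\]
is, by the proof of \propref{PropRecGeo} and by \lemref{ExtIntAdj}, a Heyting‑algebra isomorphism: it carries $\top,\bot$ to the maximal and initial subobjects of $f^\ast M$, the operations $\wedge,\vee,\Rightarrow$ on $\Omega_\ast$ to $\cap,\cup$ and Heyting implication in $\sub{\srpp{C}}{f^\ast M}$, and the quantifiers $\forall_M,\exists_M$ on $\Omega_\ast$ to the adjoints $\forall_{f^\ast M},\exists_{f^\ast M}$ along the projection in $\srpp{C}$.

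The non‑modal clauses are then immediate: in $\srpp{C}$ intersection, union, material implication, and the quantifiers along a projection are all pointwise at each $C$ — unions and images requiring no sheafification precisely because $|\cat{C}|$ is discrete — so chasing $a\in S_{\overline{(\cdot)}}(C)$ through these pointwise formulas, and using Example~\ref{Ex1} for the resulting $\cat{Sets}$‑valued computations (where $\soc{F}(C)=\cat{2}$), one reads off the clauses for $\top,\bot,\wedge,\vee,\Rightarrow$ and for $\forall x,\exists x$, the latter two quantifying over $b\in M(C)$ since $(f^\ast M)(C)=M(C)$.

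For $\Box=i\tau$ I invoke \lemref{ExtIntAdj}, by which the internal composite $i\tau$ on $\Omega_\ast$ corresponds under the above bijection to the external composite $\Delta_M\circ\Gamma_M$ on $\sub{\srpp{C}}{f^\ast M}$, where $\Delta_M$ is $f^\ast$ restricted to subobjects of $M$ in $\prs{C}$ and $\Gamma_M$ is its right adjoint. Hence $S_{\overline{\Box\varphi}}=\Delta_M\Gamma_M(S_{\overline\varphi})=f^\ast V$, where $V\rightarrowtail M$ is the largest subfunctor of $M$ in $\prs{C}$ with $f^\ast V\leq S_{\overline\varphi}$, i.e.\ $V(D)\subseteq S_{\overline\varphi}(D)$ for all $D$. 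The standard description of the largest subpresheaf contained in a given subfamily gives $a\in V(C)\iff M(p)(a)\in S_{\overline\varphi}(D)$ for every arrow $p:D\to C$ of $\cat{C}$; since $(f^\ast V)(C)=V(C)$, writing $p^\ast a:=M(p)(a)$ this reads $C\Vdash_*\Box\varphi(a)\iff D\Vdash_*\varphi(p^\ast a)$ for all $p:D\to C$. Finally, the membership clause follows by unwinding the evaluation $\varepsilon_C(\eta,b)=\eta_C(1_C,b)$ and the exponential in $\prs{C}$ recalled at the start of this section — so that $t(a)\in u(a)$, as a map $M\to\Omega_\ast$, carries $a$ to $(u_C(a))_C(1_C,t_C(a))$ — and then combining \eqref{DefFor3} with the identity $\overline\varphi_C(a)=1\iff 1_C\in\varphi_C(a)$ established in the Kripke example (cf.\ \eqref{KMods1}).

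The bookkeeping with transposes and the pointwise formulas is routine; the one step with real content is the $\Box$‑clause, specifically the identification of $S_{\overline{\Box\varphi}}$ with $f^\ast$ of the largest subfunctor of $M$ pointwise contained in $S_{\overline\varphi}$ and the explicit ``all restrictions of $a$ lie in $S_{\overline\varphi}$'' description of that subfunctor. One must also be careful about variance: since $\prs{C}=\cat{Sets}^{\cat{C}^{op}}$, the quantification in the $\Box$‑clause runs over arrows $p:D\to C$ of $\cat{C}$ with $p^\ast a$ the restriction $M(p)(a)\in M(D)$, and one must keep $S_{\overline\varphi}$ — which lives over $f^\ast M$ in $\srpp{C}$ — distinct from subobjects of $M$ in $\prs{C}$.
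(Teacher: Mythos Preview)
Your proposal is correct and follows essentially the same strategy as the paper: reduce via $C\Vdash_*\varphi(a)\iff a\in S_{\overline\varphi}(C)$ to subobjects in $\srpp{C}$, then exploit that over the discrete category $|\cat{C}|$ all the Heyting and quantifier operations are pointwise (equivalently, that $\srpp{C}$ is Boolean). The only notable difference is in the $\Box$-clause: the paper computes directly at the element level, showing $(i\tau\varphi)_C(a)=\top_C$ iff $\varphi_C(a)=\top_C$ and then unpacking $\varphi_C(a)=\top_C$ via the classifying-map description \eqref{KMods2}, whereas you invoke \lemref{ExtIntAdj} to identify $S_{\overline{\Box\varphi}}$ with $\Delta_M\Gamma_M(S_{\overline\varphi})$ and then use the standard ``largest subpresheaf contained in a subfamily'' formula. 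Both routes rest on the same structure (the external adjunction $\Delta\dashv\Gamma$ underlying $i\dashv\tau$); yours is a bit more conceptual, the paper's a bit more hands-on, but neither buys anything the other does not.
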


\begin{rem}
Although $\Vdash_*$ is a relation between objects $C$ and arrows $\varphi : M \to \Omega_\ast$, it also makes sense to think of the $\varphi$ as formulas, with the clauses above holding \wrt the arrow $\sem{\varphi}$  assigned to the formula $\varphi$ as in section \ref{HOMLModels}. For instance, interpreting a syntactic expression $\exists x \varphi(x,y)$ (by \ref{DefMod}) yields an arrow $\exists_M\widehat{\sem{\varphi}}$.
When $\cat{C}$ is a preorder this is then not merely similar to, but actually \emph{is} the Kripkean satisfaction relation between worlds and formulas, extended to higher-order logic.
\end{rem}

\begin{proof}
We shall just do a few exemplary cases for the purpose of illustration. Consider $C \Vdash_* \varphi(a) \lor \psi(a)$, which by definition \ref{DefFor4} means that $a \in S_{\overline{\varphi\lor\psi}}(C)$. Here, $\Omega_\ast \times \Omega_\ast \xrightarrow{\lor} \Omega_\ast$ is the join map. Recall from proposition \ref{PropRecGeo} that $\lor$ actually is of the form $\lor_\ast$, for the join map $\Omega \times \Omega \xrightarrow{\lor} \Omega$ in $\cat{Sets}^{|\cat{C}|}$. Thus the following commutes, by naturality of the counit $\varepsilon$:
\[
\xymatrix{
M^\ast \times M^\ast \ar[r]^-{\langle \varphi^\ast , \psi^\ast \rangle} \ar[dr]_{\langle \overline{\varphi} , \overline{\psi} \rangle} & (\Omega_\ast)^\ast \times (\Omega_\ast)^\ast \ar[r]^-{(\lor_\ast)^\ast} \ar[d]|{\varepsilon \times \varepsilon} & (\Omega_\ast)^\ast \ar[d]^\varepsilon \\
& \Omega \times \Omega \ar[r]_\lor & \Omega
}
\]
That is to say,
\[
\overline{\varphi\lor\psi} = \overline{\varphi} \lor \overline{\psi},
\]
and so $S_{\overline{\varphi} \lor \overline{\psi}} = S_{\overline{\varphi\lor\psi}}$. Since $\cat{Sets}^{|\cat{C}|}$ is a Boolean topos, by the definition of $S_{\overline{\varphi} \lor \overline{\psi}}$ in $\cat{Sets}^{|\cat{C}|}$ we have: 
\[
a \in S_{\overline{\varphi} \lor \overline{\psi}}(C) \mathiff a \in S_{\overline{\varphi}}(C)\ \text{or}\ a \in S_{\overline{\psi}}(C),
\] 
\ie if and only if $C \Vdash_* \varphi(a) \ \text{or}\ C \Vdash_* \psi(a)$. The argument for the other logical connectives is similar.
\\

For $\forall$, by definition, 
\[
C \Vdash_* \forall x \varphi(x,a) \mathiff a \in S_{\forall_M\widehat{\varphi}}(C),
\] 
with
\[
S_{\forall_M\widehat{\varphi}}(C) = \{a \in M(C) \mid 1_C \in (\forall_M\widehat{\varphi})_C(a)\}
\]
defined as in \eqref{KMods1}.
By the definition of $\forall_M$, and because $|\cat{C}|$ is discrete:
\begin{align*}
1_C \in  (\forall_M\widehat{\varphi})_C(a) & \mathiff 1_C \in \bigcup\{s \in \Omega_\ast(C) \mid \Omega_\ast(f)(s) \leq \widehat{\varphi}_C(a)_D(f,b),\\
&\quad\quad\quad\quad \text{for all}\ f : D \rightarrow C, b \in M(D)\}\\
& \mathiff 1_C \in \bigcup\{s \in \Omega_\ast(C) \mid s \leq \widehat{\varphi}_C(a)_C(1_C,b),\ \text{for all}\  b \in M(C)\}\\
& \mathiff 1_C \in \varphi_C(a,b),\ \text{for all}\  b \in M(C)\\
&\mathiff (a,b) \in S_\varphi,\ \text{for all}\  b \in M(C) \\
&\mathiff C \Vdash_* \varphi(a,b),\ \text{for all}\  b \in M(C).
\end{align*}
The last two equivalences hold by the definition of $S_\varphi$ and $\Vdash_*$. To see the third equivalence, let $\alpha : \y C \rightarrow M$ be the map that corresponds under Yoneda to $a \in M(C)$. Then, by the definition of $\widehat{\varphi}$ (cf. \eqref{DefExTP}):
\[
\widehat{\varphi}_C(a)_C(1_C,b) = \varphi_C(\alpha \times 1_M)_C(1_C,b) = \varphi_C(\alpha_C(1_C) ,b) = \varphi_C(a,b).
\]
Then, if $1_C$ is in the union, it is in one of the $s \in \Omega_\ast(C)$, and thus $1_C \in \varphi_C(a,b)$, for all $b \in M(C)$. On the other hand, if $1_C \in \varphi_C(a,b)$, for all $b \in M(C)$, then $1_C$ is in the union for $s = \{1_C\}$.
\\

The clause for $\in$ follows from its definition:
\begin{align*}
S_{\varepsilon\langle s,t \rangle} & = \{ a \in M(C) \mid 1_C \in \varepsilon\langle s,t\rangle_C(a)\}\\
& =  \{ a \in M(C) \mid 1_C \in  \varepsilon_C(s_C(a),t_C(a)) \}\\
& =  \{ a \in M(C) \mid 1_C \in (s_C(a))_C(1_C,t_C(a))\},	
\end{align*}
using the definition of the evaluation map $\varepsilon : \Omega^A \times A \rightarrow \Omega$.
\\

For $\Box$, as before, $i\tau\varphi$ determines a subfamily of $M$ with components
\[
S_{i\tau\varphi}(C) = \{a \in M(C) \mid 1_C \in (i\tau\varphi)_C(a)\}.
\]
But $(i\tau\varphi)_C(a)$ is a sieve, as it factors through $\Omega(C)$, and so 
\[
S_{i\tau\varphi}(C) = \{a \in M(C) \mid (i\tau\varphi)_C(a) = \top_C\},
\]
for $\top_C$ the maximal sieve on $C$. However, by the defining properties of $\tau$ and $i$, 
\[
(i\tau\varphi)_C(a) = \top_C \mathiff \varphi_C(a) = \top_C.
\]
Therefore, 
\begin{align*}
S_{i\tau\varphi}(C) & = \{a \in M(C) \mid \varphi_C(a) = \top_C\} \\
&= \{a \in M(C) \mid (\chi_{S_\varphi})_C(a) = \top_C\} \\
&= \{a \in M(C) \mid \{p : D \rightarrow C \mid p^\ast a \in S_\varphi(D)\} = \top_C\} \\
& = \{a \in M(C) \mid p^\ast a \in S_{\varphi}(D), \ \text{for all}\ p : D \rightarrow C\}.
\end{align*}
In forcing terms:
\begin{align*}
C \Vdash_* i\tau\varphi(a) & \mathiff a \in S_{i\tau\varphi}(C) \\
& \mathiff p^\ast a \in S_{\varphi}(D), \ \text{for all}\ p : D \rightarrow C\\
& \mathiff D \Vdash_* \varphi(p^\ast a),  \ \text{for all}\ p : D \rightarrow C.
\qedhere
\end{align*}
\end{proof}


%

\begin{exam}\emph{Sheaf Models.}
For a topological space $X$ the (surjective) geometric morphism 
\[
i^\ast\dashv i_\ast : \cat{Sets}/X \longrightarrow \sh{}{X} 
\] 
coming from the continuous inclusion $i : |X| \hookrightarrow X$ gives rise to modal sheaf semantics for classical S4 modal logic as described in \cite{awodeykishida08}. This is most readily seen by viewing sheaves on $X$ as local homeomorphisms over $X$. In this case, the adjunction \eqref{ExtModAd} reads:
\[
\Delta_\pi : \sub{LH/X}{E} \leftrightarrows \sub{\cat{Sets}/X}{i^\ast E} : \Gamma_\pi
\] 
where $E \rightarrow X$ is a local homeomorphism.  A subobject of $i^\ast E$ in $\cat{Sets}/X$  is simply a commutative triangle of functions in $\cat{Sets}$ 
\[
\xymatrix{
A \ar@{^(->}[rr] \ar[dr] && E \ar[dl]\\
&X&
}
\]
which is entirely determined by a subset $A \subseteq E$. One obtains the largest subsheaf of $E$ contained in $A$ just by applying the interior operator of $E$ to $A \subseteq E$:
\[
\xymatrix{
\text{int} A \ar@{^(->}[rr] \ar[dr] && E \ar[dl]\\
&X&
}
\]
The horizontal inclusion is then continuous \wrt the subspace topology on $\text{int} A$.
The composite is then a local homeomorphism, because the restriction of any local homeomorphism to an open subset of the total space ($E$) is one. 

This is therefore just the familiar topological semantics for \emph{propositional} modal logic, given by the adjunction
 \[
i : \sub{\sh{}{X}}{E} \cong \mathcal{O}(E) \leftrightarrows \pow{E} \cong \sub{\cat{Sets}/X}{i^\ast E} : \text{int}
\]

In this case the algebraic formulation via maps into the subobject classifier is perhaps less intuitive. The subobject classifier $\omega : \Omega \rightarrow X$ in $\sh{}{X}$ has the fibers:\foot{See  \eg \cite{maclanemoerdijk92}.}
\[
\inv{\omega}(x) = \varinjlim_{x \in U}\downarrow\!{U}
\] 
where $\downarrow\!{U}$ is the set of all open subsets of $U \in \ops{X}$.  On the other hand, viewing sheaves as a special kind of presheaves, the formulation is now more familiar.  The subobject classifier takes the form $\Omega_X(U) = \downarrow\!{U}$ (for $V \subseteq U$ this acts by $V \cap -$, \ie the inverse image along the inclusion). Thus $\Omega_X(U) = \ops{U}$ for the subspace topology on $U$. In turn, $\Omega_\ast(U) = \pow{U}$ with the evident restriction along inclusions. Thus propositions are modelled by natural transformations
$M \rightarrow \mathcal{P}$
to the contravariant powerset-functor, while the map $\tau_U : \pow{U} \rightarrow \ops{U}$, for any $U \subseteq X$, picks the largest open subset contained in a given subset of $U$, i.e.\ the interior.


With this description, sheaf semantics may be seen as the generalization of the familiar topological semantics for \emph{propositional} modal logic to \emph{quantified} languages.
%
The previous case of presheaves on a preorder $\cat{K}$ is actually a special case of this one by taking the Alexandroff topology on $\cat{K}$.
\end{exam}


\section{Geometric models from algebraic ones}\label{ReprAlgMod}

The foregoing shows that every geometric model gives rise to a logically equivalent algebraic model in the sense of section \ref{HOMLModels}. The following observation, obtained through general topos-theoretic considerations, states the converse.

\begin{fact}
For any complete Heyting algebra $H$ in a topos $\tps{E}$, the canonical structure 
\[
\tau : H \leftrightarrows \soc{E} : i
\]
($i \dashv \tau$) arises from a topos $ \tps{H}$ and geometric morphism $g : \tps{H} \rightarrow \tps{E}$, via $H = \gsoc{g}{H}$.
\end{fact}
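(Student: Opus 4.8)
\medskip
\noindent The plan is to take for $\tps{H}$ the topos $\sh{\tps{E}}{H}$ of internal sheaves on $H$ --- regarded as an internal locale in $\tps{E}$ --- together with its canonical localic geometric morphism $g : \tps{H} \to \tps{E}$; this construction makes sense for any internal frame and is standard (see \cite{JoyalTierney84}, and \cite{johnstone02} (C1.6)). No faithfulness hypothesis is needed here: $g$ need not be surjective, so this works for an arbitrary complete Heyting algebra $H$. Once $\tps{H}$ and $g$ are fixed, two points remain to be checked: (i) that $\gsoc{g}{H} \cong H$ as internal frames in $\tps{E}$, and (ii) that the canonical adjunction $i \dashv \tau$ of \lemref{InFrame} coincides with the one $g$ induces in the manner of \secref{RecGeomCase}.

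For (i) I would start from the external identity: for a locale $X$ in $\cat{Sets}$ one has $\Gamma_\ast \Omega_{\mathrm{Sh}(X)} \cong \mathrm{Sub}_{\mathrm{Sh}(X)}(1) \cong \ops{X}$, since the global sections of the subobject classifier are precisely the subterminal sheaves, \ie the open sets of $X$. Its internalization --- that $\gsoc{g}{H}$, carrying its internal frame structure, is $H$ --- is obtained by running this argument in the internal language of $\tps{E}$, or directly from the explicit $\tps{E}$-site presentation of $\sh{\tps{E}}{H}$ in \cite{JoyalTierney84}. That the resulting isomorphism is one of frames, not merely of objects, then follows because it is natural of the form
\[
\Hom{\tps{E}}{A}{\gsoc{g}{H}} \;\cong\; \Hom{\tps{H}}{g^\ast A}{\soc{H}} \;\cong\; \sub{\tps{H}}{g^\ast A},
\]
and $g^\ast$ preserves finite limits and joins of subobjects, so the Heyting and completeness operations transported along this chain are exactly those of $H$ (compare the argument of \propref{PropRecGeo}).

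For (ii): once $\gsoc{g}{H} \cong H$ has been established, $g : \tps{H} \to \tps{E}$ is a geometric morphism to which all of \secref{RecGeomCase} applies verbatim, with $\tps{F} = \tps{H}$ and $f = g$. By \lemref{ExtIntAdj} and the discussion surrounding \eqref{IntAd}, $g$ induces a canonical internal adjunction $\tau_g : \gsoc{g}{H} \leftrightarrows \soc{E} : i_g$ in which $i_g$ is a frame map and $\tau_g$ classifies the top element $\top_{\gsoc{g}{H}} = g_\ast(\top_{\soc{H}})$. But \lemref{InFrame} provides a \emph{unique} frame map $\soc{E} \to H$, so $i_g = i$ under the identification $\gsoc{g}{H} \cong H$; and since a right adjoint is determined by its left adjoint, $\tau_g = \tau$ as well. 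Hence the canonical structure $i \dashv \tau$ is literally the one arising from $g$ via $H = \gsoc{g}{H}$, which is the claim.

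The main obstacle is step (i): passing from the classical identity $\Gamma_\ast \Omega \cong \ops{X}$ to its fully internal counterpart for a frame $H$ in an arbitrary topos. This is the one place where the Joyal--Tierney theory of internal locales (or a careful Kripke--Joyal / internal-language argument) is genuinely required; everything else is formal, resting only on the uniqueness of frame maps out of $\soc{E}$ (\lemref{InFrame}) and the uniqueness of adjoints.
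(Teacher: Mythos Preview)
Your proposal is correct and follows the same overall construction as the paper: take $\tps{H} = \sh{\tps{E}}{H}$ with its canonical localic geometric morphism $g$ to $\tps{E}$. Where you diverge is in the justification of step (i). The paper obtains $H \cong g_\ast\Omega_{\tps{H}}$ by invoking the hyperconnected--localic factorization: since $g$ is already localic, $\sh{\tps{E}}{H}$ is its own localic part, while in general the localic part of a geometric morphism $g$ is $\sh{\tps{E}}{g_\ast\Omega}$; equating the two gives $H \cong \sub{\tps{H}}{1} \cong g_\ast\Omega_{\tps{H}}$ in one stroke. You instead argue by internalizing the classical identity $\Gamma_\ast\Omega_{\mathrm{Sh}(X)} \cong \ops{X}$ via Joyal--Tierney / internal language. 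Both routes are valid and rest on the same underlying Joyal--Tierney theory; the factorization argument is a bit slicker and avoids spelling out the internal-language transfer, while your route is more self-contained and makes explicit that the isomorphism is one of frames. Your step (ii), showing that the induced adjunction coincides with the canonical $i \dashv \tau$ via the uniqueness in \lemref{InFrame}, is a point the paper leaves implicit; it is a useful clarification and your argument for it is correct.
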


\begin{proof} (sketch)
The topos $\tps{H}$ may be defined as the category $\sh{\tps{E}}{H}$ of internal sheaves on $H$. A description of $\sh{\tps{E}}{H}$ can be given in terms of locales in $\tps{E}$ (see \cite{johnstone02} C1.3).  A local homeomorphism over the locale $H$ is an open locale map $E \rightarrow H$ with open diagonal $E \rightarrow E \times_H E$, where the codomain is the product of locale morphisms over $H$ (in $\tps{E}$). This is an internalization of the notion of local homeomorphism over the ``space'' $H$, in view of the fact that a continuous map $\pi : Y \rightarrow X$ of topological spaces is a local homeomorphism just in case both $\pi$ and its diagonal (over $X$) are open maps. Alternately, using the internal language of $\tps{E}$, the category $\sh{\tps{E}}{H}$ may be described as consisting of internal presheaves on the site $H$ (with the sup-topology) that satisfy the usual sheaf property in the internal language. See \cite{johnstone02}, C1.3 for details.

Next, recall that for any two frames $X,Y$ in $\tps{E}$, there is an equivalence of categories
\begin{equation}\label{GeomMod1}
\cat{Fr}_\tps{E}(Y,X) \simeq \cat{Top}(\sh{\tps{E}}{X} , \sh{\tps{E}}{Y})
\end{equation}
between frame homomorphisms $Y \rightarrow X$ in $\tps{E}$ and geometric morphisms $\sh{\tps{E}}{X} \rightarrow \sh{\tps{E}}{Y}$ \cite{johnstone02, maclanemoerdijk92}. Then $g : \sh{\tps{E}}{H} \rightarrow \tps{E}$ arises under this equivalence from the frame map $i$, noting that 
\[
\tps{E} \simeq \sh{\tps{E}}{\soc{E}}.
\]
Externally, the idea of \eqref{GeomMod1} is that the inverse image part $g^\ast$ of a geometric morphism $g : \sh{}{X} \rightarrow \sh{}{Y}$ restricts to a frame homomorphism
\[
g^\ast : \sub{\sh{}{Y}}{1} \rightarrow \sub{\sh{}{X}}{1},
\] 
where 1 is the terminal object, respectively. Observing that for any sheaf topos $\sh{}{X}$, we have $\sub{\sh{}{X}}{1} \cong \ops{X}$ gives the required frame map. On the other hand, it is also well-known that a frame map $Y \rightarrow X$ induces a geometric morphism of the required form for the sup-topology on $X$ and $Y$, respectively. These constructions are inverse and relativize to an arbitrary topos $\tps{E}$ instead of the usual category of \cat{Sets} \cite{johnstone02,JoyalTierney84}.
Moreover, the geometric morphism $g$ is surjective if $i$ is monic. 

Lastly,
\[
H \cong g_\ast \Omega_{\sh{\tps{E}}{H}},
\]
 because $\sh{\tps{E}}{H}$ coincides with the hyperconnected-localic factorization of $g$ itself, which is determined (up to equivalence of categories) \cite{JoyalTierney84} as the sheaf topos
\[
\sh{\tps{E}}{{g_\ast\Omega_{\sh{\tps{E}}{H}}}},
\]
whence it follows that 
\[
H \cong \sub{\sh{\tps{E}}{H}}{1} \cong g_\ast\Omega_{{Sh}_{\tps{E}}(H)}.
\qedhere
\]
\end{proof}

This last observation applies in particular in case $H = \gsoc{f}{F}$ is already of the required form. Then $Sh_\tps{E}(\Omega_\ast)$ occurs in the hyperconnected-localic factorization of~$f$:
\[
\xymatrix{
\tps{F} \ar[rr] \ar[dr]_f &&Sh_\tps{E}(\Omega_\ast) \ar[dl]^g\\
&\tps{E}&\\
}
\]
and
\[
\gsoc{f}{F} \cong g_\ast\Omega_{\sh{\tps{E}}{\gsoc{f}{F}}}.
\]
Externally, we have:
\begin{align*}
\subt{F}{f^\ast A} & \cong \Hom{\tps{F}}{f^\ast A}{\Omega_{\tps{F}}}\\
&\cong \Hom{\tps{E}}{A}{\gsoc{f}{F})}\\
& \cong \Hom{\tps{E}}{A}{g_\ast\Omega_{\sh{\tps{E}}{\gsoc{f}{F}}}}\\
& \cong \Hom{\sh{\tps{E}}{\gsoc{f}{F}}}{g^\ast A}{\Omega_{\sh{\tps{E}}{\gsoc{f}{F}}}}\\
& \cong \sub{\sh{\tps{E}}{\gsoc{f}{F}}}{g^\ast A}
\end{align*}
for all $A$ in $\tps{E}$. This allows us to restrict attention to \emph{localic} surjective geometric morphisms. For instance, the geometric morphism 
\[
i^\ast \dashv i_\ast : \cat{Sets}^{|\cat{D}|} \rightarrow \srp{D}
\]
considered in the previous section is localic.

\bibliographystyle{acm}
\bibliography{ModalThesis}

\end{document}